\newtheorem{theorem}{Theorem}
\newtheorem{corollary}[theorem]{Corollary}
\newtheorem{example}[theorem]{Example}
\newtheorem{lemma}[theorem]{Lemma}
\newtheorem{proposition}[theorem]{Proposition}
\newtheorem{remark}[theorem]{Remark}
\newenvironment{proof}[1][Proof]{\textbf{#1.} }{\ \rule{0.5em}{0.5em}}
\renewcommand{\geq}{\geqslant}
\def\leq{\leqslant}
\def\1{{\mathbf{1}}}
\def\1{{\mathbf{1}}}
\def\0.5{{\frac{1}{2}}}
\begin{document}

\title{\textbf{Asymptotics of Yule's nonsense correlation for
Ornstein-Uhlenbeck paths: a Wiener chaos approach.}}
\author{Soukaina Douissi \thanks{%
National School of Applied Sciences, Marrakech, Morocco. Email:\texttt{%
s.douissi@uca.ma}}, Frederi Viens \thanks{%
Department of Statistics and Probability, Michigan State University, USA.
E-mail: \texttt{viens@msu.edu}}, Khalifa Es-Sebaiy \thanks{%
Department of Mathematics, Faculty of Science, Kuwait University, Kuwait.
E-mail: \texttt{khalifa.essebaiy@ku.edu.kw}}.}
\date{\today}
\maketitle

\begin{abstract}
In this paper, we study the distribution of the so-called "Yule's nonsense 
correlation statistic" on a time interval $[0,T]$ for a time horizon $T>0$ ,
when $T$ is large, for a pair $(X_{1},X_{2})$ of independent 
Ornstein-Uhlenbeck processes. This statistic is by definition equal to :  
\begin{equation*}
\rho (T):=\frac{Y_{12}(T)}{\sqrt{Y_{11}(T)}\sqrt{Y_{22}(T)}},
\end{equation*}
where the random variables $Y_{ij}(T)$, $i,j=1,2$ are defined as  
\begin{equation*}
Y_{ij}(T):=\int_{0}^{T}X_{i}(u)X_{j}(u)du-T\bar{X}_{i}\bar{X_{j}},\text{ \ } 
\text{ \ }\bar{X}_{i}:=\frac{1}{T}\int_{0}^{T}X_{i}(u)du.
\end{equation*}
We assume $X_{1}$ and $X_{2}$ have the same drift parameter $\theta >0$. We 
also study the asymptotic law of a discrete-type version of $\rho (T)$, 
where $Y_{ij}(T)$ above are replaced by their Riemann-sum discretizations. 
In this case, conditions are provided for how the discretization (in-fill) 
step relates to the long horizon $T$. We establish identical normal 
asymptotics for standardized $\rho (T)$ and its discrete-data version. The 
asymptotic variance of $\rho (T)T^{1/2}$ is $\theta ^{-1}$. We also 
establish speeds of convergence in the Kolmogorov distance, which are of 
Berry-Esséen-type (constant*$T^{-1/2}$) except for a $\ln T$ factor. Our 
method is to use the properties of Wiener-chaos variables, since $\rho (T)$ 
and its discrete version are comprised of ratios involving three such 
variables in the 2nd Wiener chaos. This methodology accesses the Kolmogorov 
distance thanks to a relation which stems from the connection between the 
Malliavin calculus and Stein's method on Wiener space.
\end{abstract}

\section{Introduction}

In this paper, we study the normal asymptotics in law of the so-called
\textquotedblleft Yule's nonsense correlation statistic\textquotedblright\
on a time interval $[0,T]$ when the time horizon $T>0$ tends to infinity,
for two independent paths of the Ornstein-Uhlenbeck (OU) stochastic
processes. This statistic is defined as: 
\begin{equation}
\rho (T):=\frac{Y_{12}(T)}{\sqrt{Y_{11}(T)}\sqrt{Y_{22}(T)}},  \label{rho}
\end{equation}%
where the random variables $Y_{ij}(T)$, $i,j=1,2$ are given by 
\begin{equation}  \label{Y}
Y_{ij}(T):=\int_{0}^{T}X_{i}(u)X_{j}(u)du-T\bar{X}_{i}\left( T\right) \bar{
X_{j}}\left( T\right) ,\text{ \ }\text{ \ }\bar{X}_{i}\left( T\right) :=%
\frac{1}{T}\int_{0}^{T}X_{i}(u)du,
\end{equation}
and $(X_{1},X_{2})$ is a pair of two independent OU processes with the same
known drift parameter $\theta >0$, namely $X_{i}$ solves the linear SDE, for 
$i=1,2$ 
\begin{equation}
dX_{i}(t)=-\theta X_{i}(t)dt+dW^{i}(t),\text{ \ }t\geq 0  \label{OU}
\end{equation}%
with $X_{i}(0)=0$, $i=1,2$, where the driving noises $(W^{1}(t))_{t\geq 0}$, 
$(W^{2}(t))_{t\geq 0}$ are two independent standard Brownian motions (Wiener
processes). We also study the asymptotic law of a discrete-data version of $%
\rho (T)$, denote by $\tilde{\rho}(n)$ for $n$ observations, where the
Riemann integrals in (\ref{Y}) are replaced by Riemann-sum approximations.

It has been known since 1926 that a discrete version of the statistic $\rho $%
, which is the Pearson correlation coefficient, does not behave the same way
when the data from $X_{1}$ and $X_{2}$ are i.i.d. and as when they are the
discrete-time observations of a random walk. As is universally known for i.i.d. data, and also holds for shorter-memory models, $%
\tilde{\rho}(n)$ converges in probability to $0$ under all but the most
extreme circumstances (data coming from a distribution with no second
moment), but G. Udny Yule showed in \cite{Yule} that when the data come from
a random walk, $\tilde{\rho}(n)$ does not concentrate, and has a law which
seems to converge instead to a diffuse distribution on $(-1,1)$. The exact
variance and other statistical properties of this law remained unknown with
mathematical precision, though a 1986 paper \cite{phillips} by P.C.B.
Phillips showed that the limiting law of $\tilde{\rho}(n)$ with simple
symmetric random-walk data rescaled to the time interval $[0,1]$ is the same
as the law of $\rho (1)$ for two independent Wiener processes, which is
indeed necessarily diffuse and fully supported on $(-1,1)$. This advance
prompted several talented prominent probabilists to look for ways of
computing statistics of $\rho (1)$, if even only its variance, but this
remained elusive until 2017, when Ph. Ernst and two collaborators (one
posthumous) provided a closed-form expression for $Var[\rho (1)]$ in \cite%
{ESW}. Since then, other advances on the moments of $\rho (1)$ have been
made, particularly \cite{ERQ}, and recent progress was recorded in \cite{EHV}
on how to compute the momens of $\tilde{\rho}(n)$ when the paths $%
(X_{1},X_{2})$ are independent Gaussian simple-symmetric random walks. In
all cases mentioned in this paragraph, the asymptotic behavior of $\tilde{
\rho}(n)$ in law (scaled appropriately in time) is necessarily that of $\rho
(1)$ for two independent Wiener processes.

This leaves open the question of what happens to $\tilde{\rho}(n)$ when the
paths $(X_{1},X_{2})$ deviate substantially from Wiener paths or random
walks. Wiener (resp. random walk) paths have the property of exact (resp.
approximate) self-similarily. We take up the question of using different
kinds of paths, with the simplest possible example of a clear alternative to
self-similar processes, namely the ubiquitous mean-reverting OU processes.
The property of mean reversion is so distinct from self-similarily, that the
behavior of $\rho (T)$ changes drastically from one to the other. Note that
these two classes of processes are simply those satisfying (\ref{OU}) with $%
\theta =0$ (Wiener process) or $\theta \neq 0$ (OU\ process). To illustrate
the point of how distinct these processes are, let us extend the scope of
this paper momentarily, to include all processes defined by (\ref{OU}) (with
or without $\theta =0$), by replacing $W$ with a fractional Brownian
motion (fBm) denoted by $B^{H}$, for some $H\in (0,1)$. Like the Wiener
process, which corresponds to $H=1/2$, the self-similar property of $B^{H}$
simply states that for any fixed real constant $a$, $B^{H}(a\cdot
)=a^{H}B^{H}\left( \cdot \right) $ in law. By using this property with $a=T$
via the change of variable $u^{\prime }=u/T$ in the Riemann integrals
defining $\rho (T)$, we obtain immediately the equality in law 
\begin{eqnarray*}
Y_{ij}\left( T\right)  &=&\int_{0}^{1}T^{H}X_{i}(u^{\prime
})T^{H}X_{j}(u^{\prime })Tdu^{\prime }-T^{-1}T^{H+1}\bar{X}_{i}\left(
1\right) T^{H+1}\bar{X_{j}}\left( 1\right)  \\
&=&T^{2H+1}Y_{ij}\left( 1\right) 
\end{eqnarray*}%
and therefore%
\begin{equation*}
\mathcal{L}\left( \rho \left( T\right) \right) =\mathcal{L}\left( \rho
\left( 1\right) \right) .
\end{equation*}%
In fact, we only used the property of self-similarity to get the above. In
other words, for any pair of self-similar processes, the law of the nonsense
correlation $\rho \left( T\right) $ is constant as the time horizon $T$
increases. In stark contrast, in this paper, we show that, for a pair of OU
processes, the law of $\rho \left( T\right) $ converges to the Dirac mass at
0. As mentioned, we show more: a central limit theorem for $\rho \left(
T\right) \sqrt{T}$ (the mean of $\rho \left( T\right) $ is always 0), with
asymptotic variance equal to $\theta ^{-1}$, and a speed of convergence of $%
\mathcal{L}\left( \rho \left( T\right) T^{1/2}\right) $ to $N\left( 0,\theta
^{-1}\right) $ in Kolmogorov metric at the rate $T^{-1/2}\ln T$.

The discrete-observation part of this paper simply replaces the Riemann
integrals by the Riemann sums, for instance replacing the first integral in $%
Y_{ij}\left( T\right) $ by $\Delta
_{n}\sum\limits_{k=0}^{n-1}X_{i}(t_{k})X_{j}(t_{k})$ where $\Delta _{n}=T/n$
and $t_{k}=kT/n$.\ We denote the resulting empirical correlation by $\tilde{%
\rho}(n)$ rather than $\rho (T)$. It is convenient to note that $T=n\Delta
_{n}$ can be thought of as depending on $n$, and we will systematically
emphasize this by denoting $T=T_{n}$. It is assumed that the discretization
step $\Delta _{n}$ converges to $0$ while $T_{n}=T$ tends to $\infty $,
which means that $n\gg T_{n}$ in our asymptotics. We provide a full range of speeds of convergence in central limit theorem depending on how fast $\Delta _{n}$ converges to $0.$
We find in fact that we must have $T_{n}\Delta _{n}=n\Delta
_{n}^{2}\rightarrow 0$ as $n\rightarrow \infty ,$ and we also note that $%
n\Delta _{n}=T_{n}=T\rightarrow \infty $, as well it should. Our convergence
result, which immediately implies the central limit theorem $%
\lim_{n\rightarrow \infty }\mathcal{L}\left( \tilde{\rho}(n)T_{n}^{1/2}%
\right) =\mathcal{N}\left( 0,\theta ^{-1}\right) $, is%
\begin{eqnarray*}
d_{Kol}\left( \sqrt{\theta }\sqrt{T_{n}}\tilde{\rho}(n),\mathcal{N}%
(0,1)\right)  &\leqslant &c(\theta )\times \ln (n\Delta _{n})\max \left(
(n\Delta _{n})^{-1/2},(n\Delta _{n}^{2})^{\frac{1}{3}}\right)  \\
&=&c(\theta )\times \ln (T_{n})\max \left( T_{n}^{-1/2},(T_{n}\Delta _{n})^{%
\frac{1}{3}}\right) .
\end{eqnarray*}%
From this, we can immediately read off that a rather optimal rate of
sampling of our discrete data is one for which the two terms in the max are
of the same order, i.e. $T_{n}^{-1/2}\asymp (T_{n}\Delta _{n})^{\frac{1}{3}}$%
, which is equivalent to requiring that $\Delta _{n}$ be of order $%
T_{n}^{-5/2}$, which in turn, since $T_{n}=n\Delta _{n}$, is equivalent to $%
\Delta _{n}$ of order $n^{-5/7}$. This is explained in more detail in the
conclusion of the section on discrete data. In any case, in Kolmogorov
distance, we see that the best rate of convergence of $\sqrt{T_{n}}\tilde{%
\rho}(n)$ to $N\left( 0,\theta ^{-1}\right) $ is of order $T_{n}^{-1/2}\ln
T_{n}$, which is exactly the same rate as in the case of continuous data,
and which occurs for a relatively frequency of observations of order $\Delta
_{n}^{-1}=T_{n}^{5/2}$ over unit intervals. Lower frequency of observations
lead to slower convergence rate in Kolmogorov distance in the scale of the
time horizon $T_{n}$ compared to continuous observation. Higher frequency of
observation leads to the same rate $T_{n}^{-1/2}\ln T_{n}$ as with
continuous observations, but this can be considered wasteful since the same
rate was achieved at the optimal frequency of $\Delta _{n}^{-1}=T_{n}^{5/2}$
per unit time. It is worth stating again that these results in discrete
time, pertaining to the convergence rate in the CLT for $\sqrt{T_{n}}\tilde{%
\rho}(n)$, are a second-order result compared to the CLT itself, i.e. 
\begin{equation*}
\lim_{n\rightarrow \infty }\sqrt{T_{n}}\tilde{\rho}(n)=\lim_{T\rightarrow
\infty }\sqrt{T}\rho (T)=\mathcal{N}(0,\theta ^{-1})
\end{equation*}%
which holds in law identically in both the discrete and continuous data
cases.

As mentioned, we use techniques from analysis on Wiener chaos to prove the
above results. Some of these results are technical and novel, and we provide
here a few points in the hopes of enlightening the methods. A key element
comes from the connection discovered by I. Nourdin and G. Peccati (see \cite%
{NP-book}) between the Malliavin calculus and Stein's method. In that
connection, the distance in law between a random variable $X$ and the
standard normal law can be measured to some extent by comparing the
Hilbert-space norm of the Malliavin derivative $DX$ to the value $1$, which
is the value one would find for the norm of the Malliavin derivative of a
standard normal variable $N$ under any reasonable coupling of $X$ and $N,$
i.e. under any reasonable representation of $X$ on Wiener space. The
question of how to represent $X$ on Wiener space is typically trivial when
dealing with functionals of stochastic processes based on Wiener processes,
and this is certainly the case in our paper. The question of whether $DX$ is
an adeqate functional of $X$ to make the comparison with $\mathcal{N}\left(
0,1\right) $ is less trivial. The original work in \cite{NP2009b} noted that
it is sufficient for variables on Wiener chaos, and used an auxiliary random
variable $G_{X}$ which is slightly more involved than $\left\Vert
DX\right\Vert ^{2}$ to establish broader convergence in law beyond fixed
chaos. That same random variable $G_{X}$ was used in \cite{nv} to
characterize laws on Wiener chaos at the level of densities, and was used
specifically in \cite[Theorem 2.4]{NP2009} to measure distances between laws
in the Kolmogorov metric. We use that theorem herein, by applying it
separately to all three Wiener chaos components $Y_{11,}Y_{22},Y_{12}$ which
are used to calculate $\rho \left( T\right) $, noting as in \cite{NP2009b}
that $G_{X}=2\left\Vert DX\right\Vert ^{2}$.

The above elements are explained in the preliminary section on analysis on
Wiener space below. They are used herein via standard computations of
variances and differentiation and product rules for variables on Wiener
chaos which are represented as double Wiener integrals with respect to the $%
W_{i}$'s, leading to computing the asymptotic variance of the rescaled
numerator $T^{-1/2}Y_{12}(T)$, namely $1/4\theta ^{3}$, and the speed of
convergence of the variances to this limit, including precise estimations of
the constants in this rate of convergence as functions of $\theta $. It
turns out that the rescaled denominator $T^{-1}(Y_{11}\left( T\right)
Y_{22}\left( T\right) )^{1/2}$ does not have normal fluctuations, but rather
converges to the constant $1/2\theta $. We establish this too. Adding to
this that the numerator, as a second-chaos variable, has mean zero, this
indicates that the entire rescaled fraction $\rho $ should converge in law
to $\mathcal{N}\left( 0,\theta ^{-1}\right) .$ Finding a presumably sharp
rate of this convergence in Kolmogorov metric is the main technical issue we
tackle in this paper. Establishing this for the numerator alone is a key
quantitative estimate. We use \cite[Theorem 2.4]{NP2009} and our ability to
compute the norm of the Malliavin derivative of the first double Wiener
integral $\int_{0}^{T}X_{i}(u)X_{j}(u)du$ in the expression for $%
Y_{12}\left( T\right) ,$ and we find a rate of normal convergence of order $%
T^{-1/2}$. However, we must also handle the second term in $Y_{12}\left(
T\right) $, which is the (rescaled) product $\bar{X_{1}}(T)\bar{X_{2}}(T)$
of two independent normal variables, which are both non-independent from the
first part of $Y_{12}\left( T\right) $. For this, we appeal to a 1971
theorem of Michel and Pfanzagl \cite{MP} which allows us to decouple the
dependence of a sum (resp. a ratio) of two variables when comparing them to
a normal law in Kolmogorov distance. We specialize this theorem to the case
when the second summand (resp. the denominator) is a product normal variable
(resp. the root of a product normal), establishing an optimal use of it in
this special case. See Proposition \ref{estim-kol}, Corollary \ref{kol-sum},
Proposition \ref{estim-num} and estimate (\ref{rhoestim-1}), and estimate (%
\ref{majorY11}).

This optimal use of this decoupling technique comes at the very small cost
of adding a factor of $\ln T$ to our rate of convergence. We believe this
factor is optimal given our use of \cite{MP}, and is determined by the
weight of the tail of a product normal law, which is asymptotically the same
as the tail of a chi-square variable with one degree of freedom, which in
logarithmic scale, is the same as an exponential tail . The interested
reader can check that any use of Holder's inequality or similar methods
based on moments, cannot achieve this more efficient method, leading instead
to a rate of convergence of $T^{-\alpha }$ for $\alpha <1/2$. The reader
will also observe our use of the fine structure of the second Wiener chaos
as a separable Hilbert space, to deal with the tail distribution of $\rho $%
's denominator terms. This structure is documented for instance in \cite[%
Section 2.7.4]{NP-book} where it is shown that every second-chaos variable
can be represented as a series $\sum_{k}\lambda _{k}\chi _{k}^{2}$ where $%
\left( \chi _{k}^{2}\right) _{k}$ is a sequence of i.i.d. mean-zero
chi-square variables with one degree of freedom, and $\left( \lambda
_{k}\right) _{k}$ is in $\ell ^{2}$. In our case, the reader will observe
that the terms in the denominator of $\rho $ also contain non-zero
expectations, that their $\lambda _{k}$'s are positive and in $\ell ^{1}$,
and that the expectations equal $\sum_{k}\lambda _{k}$. This fact is
essential to us being able to control the denominator.

The techniques used to establish results in the case of discrete
observations are similar to those in the continuous case. Additional
ingredients include the rate of convergence of the Riemann-sum version of
the first integral in $Y_{12}\left( T_{n}\right) $ to its limit. This rate
turns out to be $n\Delta _{n}^{2}$ where, as mentioned, $n$ is the number of
observations in $(0,T_{n}],$ and the regular mesh is $\Delta _{n}$. The use
of the aforementioned Michel-Pfanzagl theorem from \cite{MP} to deal with
the product-normal term in the numerator has to be optimized against this
dicretization error; this is where the term $\left( n\Delta _{n}^{2}\right)
^{1/3}$ comes from, whereas the term $\left( n\Delta _{n}\right) ^{-1/2}$ is
none other than the same convergence rate $T^{-1/2}$ for the numerator in
Kolmogorov distance as in the continuous case. See Lemma \ref{norm-delta}
and Proposition \ref{An-kol}. The use of the sum version of the
Michel-Pfanzagl theorem from our Corollary \ref{kol-sum} leads again to a
leading log correction factor $\ln T_{n}=\ln \left( n\Delta _{n}\right) $.
The denominator terms also require a careful analysis, though no additional
ideas are needed beyond what was already established in the continuous case.

With this roadmap summary complete, the structure of this paper should
appear as straightforward. We begin with a section of preliminaries
presenting the tools needed from analysis on Wiener space, followed by a
section covering the convergence in the continuous case, and then a section
dealing with the case of discrete data. The final section provides some
numerics to illustrate the convergence rates in practice, wherein we find
that in discrete time, the time-scaled $\tilde{\rho}\left( n\right) $ does
indeed behave in distribution largely like a normal with variance $\theta
^{-1}$, even without using the optimal observation frequency.

\section{Preliminaries}

\subsection{Elements of Analysis on Wiener space\label{Elements}}

With $\left( \Omega ,\mathcal{F},\mathbf{P}\right) $ denoting the Wiener
space of a standard Wiener process $W$, for a deterministic function $h\in
L^{2}\left( \mathbf{R}_{+}\right) =:{{\mathcal{H}}}$, the Wiener integral $%
\int_{\mathbf{R}_{+}}h\left( s\right) dW\left( s\right) $ is also denoted by 
$W\left( h\right) $. The inner product $\int_{\mathbf{R}_{+}}f\left(
s\right) g\left( s\right) ds$ will be denoted by $\left\langle
f,g\right\rangle _{{\mathcal{H}}}$.

\begin{itemize}
\item \textbf{The Wiener chaos expansion}. For every $q\geq 1$, ${\mathcal{H}%
}_{q}$ denotes the $q$th Wiener chaos of $W$, defined as the closed linear
subspace of $L^{2}(\Omega )$ generated by the random variables $%
\{H_{q}(W(h)),h\in {{\mathcal{H}}},\Vert h\Vert _{{\mathcal{H}}}=1\}$ where $%
H_{q}$ is the $q$th Hermite polynomial. Wiener chaos of different orders are
orthogonal in $L^{2}\left( \Omega \right) $. The so-called Wiener chaos
expansion is the fact that any $X\in L^{2}\left( \Omega \right) $ can be
written as 
\begin{equation}
X=\mathbf{E}[X]+\sum_{q=1}^{\infty }X_{q}  \label{WienerChaos}
\end{equation}%
for some $X_{q}\in {\mathcal{H}}_{q}$ for every $q\geq 1$. This is
summarized in the direct-orthogonal-sum notation $L^{2}\left( \Omega \right)
=\oplus _{q=0}^{\infty }{\mathcal{H}}_{q}$. Here ${\mathcal{H}}_{0}$ denotes
the constants.

\item \textbf{Relation with Hermite polynomials. Multiple Wiener integrals}.
The mapping ${I_{q}(h^{\otimes q}):}=q!H_{q}(W(h))$ is a linear isometry 
between the symmetric tensor product ${\mathcal{H}}^{\odot q}$ (equipped 
with the modified norm $\Vert .\Vert _{{\mathcal{H}}^{\odot q}}=\sqrt{q!}
\Vert .\Vert _{{\mathcal{H}}^{\otimes q}}$) and ${\mathcal{H}}_{q}$. Hence,
for $X$ and its Wiener chaos expansion (\ref{WienerChaos})  above, each term 
$X_{q}$ can be interpreted as a multiple Wiener integral $ I_{q}\left(
f_{q}\right) $ for some $f_{q}\in {\mathcal{H}}^{\odot q}$.

\item \textbf{Isometry Property-Product formula}. For any integers $%
1\leqslant q\leqslant p$ and $f\in \mathcal{H}^{\odot p}$ and $g\in \mathcal{%
H}^{\odot q}$, we have 
\begin{eqnarray}  \label{Isometryproperty01}
\mathbf{E}[I_{p}(f)I_{q}(g)] = &&\left\{ 
\begin{array}{ll}
p! \langle f , g \rangle _{{\mathcal{H}^{\otimes p}}} & \mbox{ if } p=q \\ 
~~ &  \\ 
0 & \mbox{ } \text{otherwise}.%
\end{array}%
\right.
\end{eqnarray}
For any integers $p$, $q \geq 1$ and symmetric integrands $f\in \mathcal{H}%
^{\odot p}$ and $g\in \mathcal{H}^{\odot q}$, 
\begin{equation}
I_{p}(f)I_{q}(g)=\sum_{r=0}^{p\wedge q}r!{C}_{p}^{r}{C}_{q}^{r}I_{p+q-2r}(f%
\tilde{\otimes _{r}}g);  \label{productformula01}
\end{equation}%
where $f\otimes _{r}g$ is the contraction of order $r$ of $f$ and $g$ which
is an element of ${\mathcal{H}}^{\otimes (p+q-2r)}$ defined by  
\begin{eqnarray*}
&&(f\otimes _{r}g)(s_{1},\ldots ,s_{p-r},t_{1},\ldots ,t_{q-r}) \\
&& =\int_{\mathbf{R}_{+}^{p+q-2r}}f(s_{1},\ldots ,s_{p-r},u_{1},\ldots
,u_{r})g(t_{1},\ldots ,t_{q-r},u_{1},\ldots ,u_{r})\,du_{1}\cdots du_{r}.
\end{eqnarray*}

while $(f\tilde{\otimes _{r}}g)$ denotes its symmetrization. More generally
the symmetrization $\tilde{f}$ of a function $f$ is defined by $\tilde{f}%
(x_{1},...,x_{p}) = \frac{1}{p!} \sum\limits_{\sigma}
f(x_{\sigma(1)},...,x_{\sigma(p)})$ where the sum runs over all permutations 
$\sigma$ of $\{1,...,p\}$.  The special case for $p=q=1$ in (\ref%
{productformula01}) is particularly handy, and can be written in  its
symmetrized form: 
\begin{equation}  \label{produit}
I_{1}(f)I_{1}(g)=2^{-1}I_{2}\left( f\otimes g+g\otimes f\right) +\langle
f,g\rangle _{{\mathcal{H}}}.
\end{equation}
where $f\otimes g$ means the tensor product of $f$ and $g$.

\item \textbf{Hypercontractivity in Wiener chaos}. For $h\in {\mathcal{H}}%
^{\otimes q}$, the multiple Wiener integrals $I_{q}(h)$, which exhaust the
set ${\mathcal{H}}_{q}$, satisfy a hypercontractivity property (equivalence
in ${\mathcal{H}}_{q}$ of all $L^{p}$ norms for all $p\geq 2$), which
implies that for any $F\in \oplus _{l=1}^{q}{\mathcal{H}}_{l}$ (i.e. in a
fixed sum of Wiener chaoses), we have 
\begin{equation}
\left( \mathbf{E}\big[|F|^{p}\big]\right) ^{1/p}\leqslant c_{p,q}\left( 
\mathbf{E}\big[|F|^{2}\big]\right) ^{1/2}\ \mbox{ for any }p\geq 2.
\label{hypercontractivity}
\end{equation}%
The constants $c_{p,q}$ above are known with some precision when $F\in {%
\mathcal{H}}_{q}$: by Corollary 2.8.14 in \cite{NP-book}, $c_{p,q}=\left(
p-1\right) ^{q/2}$. 

\item \textbf{Malliavin derivative}. For any function $\Phi \in C^{1}\left( 
\mathbf{R}\right) $ with bounded derivative, and any $h\in {\mathcal{H}}$,
the Malliavin derivative $D$ of the random variable $X:=\Phi \left( W\left(
h\right) \right) $ is defined to be consistent with the following chain
rule: 
\begin{equation*}
DX:X\mapsto D_{r}X:=\Phi ^{\prime }\left( W\left( h\right) \right) h\left(
r\right) \in L^{2}\left( \Omega \times \mathbf{R}_{+}\right) .
\end{equation*}%
A similar chain rule holds for multivariate $\Phi $. One then extends $D$ to
the so-called Gross-Sobolev subset $\mathbf{D}^{1,2}\varsubsetneqq
L^{2}\left( \Omega \right) $ by closing $D$ inside $L^{2}\left( \Omega
\right) $ under the norm defined by $\left\Vert X\right\Vert _{1,2}^{2}:=%
\mathbf{E}\left[ X^{2}\right] +\int_{\mathbf{R}_{+}}\mathbf{E|}D_{r}X|^{2}dr.
$ All Wiener chaos random variable are in the domain $\mathbf{D}^{1,2}$ of $D
$ . In fact this domain can be expressed explicitly for any $X$ as in (\ref%
{WienerChaos}): $X\in \mathbf{D}^{1,2}$ if and only if $\sum_{q}qq!\Vert
f_{q}\Vert _{{\mathcal{H}}^{\otimes q}}^{2}<\infty $. 

\item \textbf{Generator }$L$ \textbf{of the Ornstein-Uhlenbeck semigroup}.
The linear operator $L$ is defined as being diagonal under the Wiener chaos
expansion of $L^{2}\left( \Omega \right) $: ${\mathcal{H}}_{q}$ is the
eigenspace of $L$ with eigenvalue $-q$, i.e. for any $X\in {\mathcal{H}}_{q}$
, $LX=-qX$. We have $Ker$($L)=$ ${\mathcal{H}}_{0}$, the constants. The
operator $-L^{-1}$ is the negative pseudo-inverse of $L$, so that for any $%
X\in {\mathcal{H}}_{q}$, $-L^{-1}X=q^{-1}X$.

\item \textbf{Kolmogorov distance.} Recall that, if $X,Y$ are two
real-valued random variables, then the Kolmogorov distance between the law
of $X$ and the law of $Y$ is given by 
\begin{equation*}
d_{Kol}\left( X,Y\right) =\sup_{z\in \mathbf{R}}\left\vert \mathbf{P}\left[
X\leqslant z\right] -\mathbf{P}\left[ Y\leqslant z\right] \right\vert 
\end{equation*}%
If $X\in \mathbb{D}^{1,2}$, with $\mathbf{E}[X]=0$ and $Y=\mathcal{N}(0,1)$,
then (Theorem 2.4 in \cite{NP2009}), then 
\begin{equation*}
d_{Kol}\left( X,Y\right) \leqslant \sqrt{\mathbf{E}[(1-\langle
DX,-DL^{-1}X\rangle _{\mathcal{H}})^{2}]}
\end{equation*}%
If moreover, $X=I_{q}(f)$ for some $q\geq 2$, $f\in \mathcal{H}^{\odot q}$,
then $\langle DX,-DL^{-1}X\rangle _{\mathcal{H}}=q^{-1}\Vert DX\Vert _{%
\mathcal{H}}^{2}$, and thus in this case

\begin{equation}
d_{Kol}\left( X,Y\right) \leqslant \sqrt{\mathbf{E}[(1-q^{-1}\Vert DX\Vert _{%
\mathcal{H}}^{2})^{2}]}  \label{est-kol}
\end{equation}
\end{itemize}

\begin{lemma}
\label{Borel-Cantelli} Let $\gamma >0$. Let $(Z_{n})_{n\in \mathbb{N}}$ be a
sequence of random variables. If for every $p\geq 1$ there exists a constant
$c_{p}>0$ such that for all $n\in \mathbb{N}$,  
\begin{equation*}
\Vert Z_{n}\Vert _{L^{p}(\Omega )}\leqslant c_{p}\cdot n^{-\gamma },
\end{equation*}
then for all $\varepsilon >0$ there exists a random variable $\eta 
_{\varepsilon }$ which is almost surely finite such that  
\begin{equation*}
|Z_{n}|\leqslant \eta _{\varepsilon }\cdot n^{-\gamma +\varepsilon }\quad %
\mbox{almost surely}
\end{equation*}
for all $n\in \mathbb{N}$. Moreover, $E|\eta _{\varepsilon }|^{p}<\infty $ 
for all $p\geq 1$.
\end{lemma}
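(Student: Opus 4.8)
The plan is to exhibit $\eta_\varepsilon$ explicitly as a supremum and to check its integrability by hand; both the almost-sure finiteness and the claimed pointwise domination will then be automatic. First note that it is enough to handle small $\varepsilon$: if $0<\varepsilon'\leq\varepsilon$ and the conclusion holds for $\varepsilon'$ with some $\eta_{\varepsilon'}$, then for every $n\geq 1$ we have $|Z_n|\leq \eta_{\varepsilon'}n^{-\gamma+\varepsilon'}\leq \eta_{\varepsilon'}n^{-\gamma+\varepsilon}$, so the same random variable works for $\varepsilon$. Hence we may assume $0<\varepsilon<\gamma$ (and read $\mathbb{N}$ as starting at $1$; the index $n=0$, if included, makes the conclusion vacuous since $0^{-\gamma+\varepsilon}=+\infty$). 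Then I would set
\begin{equation*}
\eta_\varepsilon:=\sup_{n\geq 1}|Z_n|\,n^{\gamma-\varepsilon},
\end{equation*}
so that $|Z_n|\leq \eta_\varepsilon\,n^{-\gamma+\varepsilon}$ holds surely, for all $n$, by the definition of the supremum. Everything therefore reduces to showing $E|\eta_\varepsilon|^p<\infty$ for every $p\geq 1$, which in particular gives $\eta_\varepsilon<\infty$ almost surely.

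The one tool needed is the elementary bound $\sup_n a_n\leq\big(\sum_n a_n^r\big)^{1/r}$, valid for any nonnegative sequence $(a_n)$ and any $r>0$. With $a_n=|Z_n|\,n^{\gamma-\varepsilon}$ this gives $\eta_\varepsilon^{\,r}\leq\sum_{n\geq 1}|Z_n|^r\,n^{r(\gamma-\varepsilon)}$. Now fix $p\geq 1$. If $p>1/\varepsilon$, choose $r\in(1/\varepsilon,\,p]$; since $p/r\geq 1$, Minkowski's inequality in $L^{p/r}(\Omega)$ applied to this nonnegative series, combined with the hypothesis $\Vert Z_n\Vert_{L^p(\Omega)}\leq c_p\,n^{-\gamma}$, yields
\begin{equation*}
\big(E|\eta_\varepsilon|^p\big)^{r/p}=\big\Vert\eta_\varepsilon^{\,r}\big\Vert_{L^{p/r}(\Omega)}\leq\sum_{n\geq 1}n^{r(\gamma-\varepsilon)}\,\Vert Z_n\Vert_{L^p(\Omega)}^r\leq c_p^{\,r}\sum_{n\geq 1}n^{-r\varepsilon}<\infty,
\end{equation*}
the series converging because $r\varepsilon>1$; hence $E|\eta_\varepsilon|^p<\infty$. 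For the remaining range $1\leq p\leq 1/\varepsilon$, pick any $p'>1/\varepsilon$, apply the previous step to get $E|\eta_\varepsilon|^{p'}<\infty$, and conclude by Lyapunov's (Jensen's) inequality that $E|\eta_\varepsilon|^p\leq\big(E|\eta_\varepsilon|^{p'}\big)^{p/p'}<\infty$. This completes the verification, and with it the lemma.

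As for the main obstacle: there is really none of substance here — this is a quantitative Borel--Cantelli / Kolmogorov maximal-inequality statement, and the only point requiring a little care is the simultaneous choice of exponents, namely taking $r$ large enough that $r\varepsilon>1$ (so that $\sum_n n^{-r\varepsilon}$ converges) while keeping $p/r\geq 1$ (so that Minkowski applies), which is possible precisely because $\varepsilon>0$. An alternative would be the more classical two-step route: a union bound $\mathbf{P}\big(|Z_n|\,n^{\gamma-\varepsilon}>\lambda\big)\leq\lambda^{-p}c_p^{\,p}\,n^{-p\varepsilon}$ followed by Borel--Cantelli (with $p\varepsilon>1$) to get $\eta_\varepsilon<\infty$ almost surely, and then a separate moment computation; but the $\ell^r$-domination above settles both conclusions at once and is cleaner, so that is the route I would take.
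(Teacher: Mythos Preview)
Your proof is correct and complete. Note, however, that the paper does not actually prove this lemma: it is stated in the preliminaries as a standard technical tool (a quantitative Borel--Cantelli-type result, well known in the literature on almost-sure convergence rates) and then invoked later without justification. So there is no paper proof to compare against.

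That said, your argument is clean and self-contained. The explicit choice $\eta_\varepsilon=\sup_{n\geq 1}|Z_n|\,n^{\gamma-\varepsilon}$ together with the $\ell^r$-domination $\sup_n a_n\leq(\sum_n a_n^r)^{1/r}$ and Minkowski in $L^{p/r}$ handles both the almost-sure bound and the moment finiteness in one stroke, which is arguably tidier than the more common two-step route (Borel--Cantelli for a.s.\ finiteness, then a separate moment estimate) that you mention at the end. The reduction to $0<\varepsilon<\gamma$ is harmless but not actually needed for your computation to go through; the only genuine constraint is $r\varepsilon>1$, which you secure correctly.
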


\section{Continuous observations}

In this section, we compute the asymptotic variance of $\rho (T)$ and its
normal fluctuations for large $T,$ by working with each of the three terms
which appear in its definition. For the sake of convienence and compactness
of notation, we construct a two-sided Brownian motion $(W(t))_{t\in \mathbb{R%
}}$ from the two independent Brownian motions $(W^{1}(t))_{t\geq 0}$ and $%
(W^{2}(t))_{t\geq 0}$ as follows : 
\begin{equation*}
W(t):=W^{1}(t)\mathbf{1}_{\{t\geq 0\}}+W^{2}(-t)\mathbf{1}_{\{t<0\}},\text{
\ }t\in \mathbb{R.}
\end{equation*}%
The following lemma will be convenient in the sequel.

\begin{lemma}
\label{I2}  Let $f$, $g$ $\in L^{2}(\mathbb{R}_{+})$, then  
\begin{equation*}
I_{1}^{W^{1}}(f)I_{1}^{W^{2}}(g) = I_{2}^{W}(\bar{f} \otimes \bar{\bar{g}})
\end{equation*}
where $\bar{f}$, $\bar{\bar{g}}$ in $L^{2}(\mathbb{R})$ are defined by  
\begin{equation*}
\bar{f}(x) = f(x) \mathbf{1}_{\{x \geq 0\}}, \text{ \ } \bar{\bar{g}}(x) =
-g(-x) \mathbf{1}_{\{x < 0\}}.
\end{equation*}
\end{lemma}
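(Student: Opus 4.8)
The plan is to reduce the product of two first-chaos integrals, one against $W^{1}$ and one against $W^{2}$, to a single first-chaos integral against the two-sided process $W$, and then to invoke the elementary product formula \eqref{produit}. The only genuine input beyond bookkeeping will be the identification of $I_{1}^{W^{2}}(g)$ as a Wiener integral against $W$ over the negative half-line, together with the observation that the supports of the two resulting kernels are disjoint.

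First I would note that, by the very definition of $W$, the restriction $(W(t))_{t\geq 0}$ is exactly $W^{1}$, so that for any $f\in L^{2}(\mathbb{R}_{+})$ one has $I_{1}^{W^{1}}(f)=\int_{0}^{\infty}f(s)\,dW^{1}(s)=\int_{\mathbb{R}}\bar{f}(x)\,dW(x)=I_{1}^{W}(\bar{f})$. Next, the process $(W(-s))_{s\geq 0}$ equals $W^{2}$; carrying out the deterministic, orientation-reversing change of variable $x=-s$ in the Wiener integral --- immediate for step functions $g$, then extended to all of $L^{2}(\mathbb{R}_{+})$ by the $L^{2}$-isometry and density of step functions --- gives
\[
I_{1}^{W^{2}}(g)=\int_{0}^{\infty}g(s)\,d\bigl(W(-s)\bigr)=\int_{-\infty}^{0}\bigl(-g(-x)\bigr)\,dW(x)=I_{1}^{W}(\bar{\bar{g}}),
\]
with $\bar{\bar{g}}$ as in the statement; the minus sign is precisely the one produced by reversing the orientation of the integrator.

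Then I would apply the $p=q=1$ product formula \eqref{produit} to the kernels $\bar{f},\bar{\bar{g}}\in L^{2}(\mathbb{R})$:
\[
I_{1}^{W}(\bar{f})\,I_{1}^{W}(\bar{\bar{g}})=\tfrac{1}{2}\,I_{2}^{W}\bigl(\bar{f}\otimes\bar{\bar{g}}+\bar{\bar{g}}\otimes\bar{f}\bigr)+\langle \bar{f},\bar{\bar{g}}\rangle_{L^{2}(\mathbb{R})}.
\]
Here $\bar{f}$ is supported in $[0,\infty)$ and $\bar{\bar{g}}$ in $(-\infty,0)$, so these supports are disjoint and $\langle \bar{f},\bar{\bar{g}}\rangle_{L^{2}(\mathbb{R})}=0$; this is nothing but the reflection, at the level of kernels, of the independence of $W^{1}$ and $W^{2}$, which forces the product of the independent centered Gaussians $I_{1}^{W^{1}}(f)$ and $I_{1}^{W^{2}}(g)$ to be a mean-zero element of the second chaos. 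Finally, since $\tfrac12(\bar{f}\otimes\bar{\bar{g}}+\bar{\bar{g}}\otimes\bar{f})$ is exactly the symmetrization of $\bar{f}\otimes\bar{\bar{g}}$, and $I_{2}^{W}$ applied to a kernel means $I_{2}^{W}$ applied to its symmetrization, the right-hand side equals $I_{2}^{W}(\bar{f}\otimes\bar{\bar{g}})$, which is the claim. The only step needing a modicum of care --- hence the main, though mild, obstacle --- is the change of variables defining $I_{1}^{W^{2}}(g)$: one must confirm that $(W(-s))_{s\geq 0}$ is a standard Brownian motion (immediate from the definition of $W$ and the fact that $W^{2}$ is one) and keep track of the sign; this is settled on step functions and passed to the limit by the isometry, with no stochastic-calculus subtleties since these are Wiener, not It\^o, integrals.
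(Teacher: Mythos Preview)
Your proof is correct and follows essentially the same approach as the paper: identify $I_{1}^{W^{1}}(f)=I_{1}^{W}(\bar{f})$ and $I_{1}^{W^{2}}(g)=I_{1}^{W}(\bar{\bar{g}})$, then apply the product formula \eqref{produit} and use that $\langle \bar{f},\bar{\bar{g}}\rangle_{L^{2}(\mathbb{R})}=0$ by disjointness of supports. The only cosmetic difference is that the paper argues in the reverse direction, starting from $I_{2}^{W}(\bar{f}\otimes\bar{\bar{g}})$ and expanding via the product formula to reach $I_{1}^{W^{1}}(f)I_{1}^{W^{2}}(g)$; your write-up is, if anything, slightly more careful about the change of variable yielding the minus sign in $\bar{\bar{g}}$.
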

\begin{proof}
Using the product formula of multiple integrals, we have 
\begin{eqnarray*}
I_{2}^{W}(\bar{f} \otimes \bar{\bar{g}})&=&I_{1}^{W}(\bar{f})I_{1}^{W}(\bar{%
\bar{g}})-E\left[I_{1}^{W}(\bar{f})I_{1}^{W}(\bar{\bar{g}})\right] \\
&=&\left(\int_{\mathbb{R}}\bar{f}(x)dW_x\right)\left(\int_{\mathbb{R}}\bar{%
\bar{g}}(x)dW_x\right)-E\left<\bar{f},\bar{\bar{g}}\right>_{L^{2}(\mathbb{R}%
)} \\
&=&\left(\int_{0}^{\infty}f(x)dW^1_x\right)\left(\int_{0}^{\infty}g(x)dW^2_x%
\right) \\
&=&I_{1}^{W^{1}}(f)I_{1}^{W^{2}}(g),
\end{eqnarray*}%
which completes the proof.
\end{proof}

\subsection{Asymptotic distribution of $\frac{Y_{12}(T)}{\protect\sqrt{T}}$ :%
}

The numerator of $\rho (T)$ can be written as follows 
\begin{equation}
\frac{Y_{12}(T)}{\sqrt{T}}=F_{T}-\sqrt{T}\bar{X_{1}}(T)\bar{X_{2}}(T)
\label{num}
\end{equation}%
where $F_{T}:=\frac{1}{\sqrt{T}}\int_{0}^{T}X_{1}(t)X_{2}(t)dt$. Using the
notation $I_{1}^{W}$ for the Wiener integral with respect to $W$, since $%
X_{i}(t)=\int_{0}^{t}e^{-\theta (t-u)}dW^{i}(u)$ = $I_{1}^{W^{i}}(f_{t})$, $%
i=1,2$ where $f_{t}(.):=e^{-\theta (t-.)}\mathbf{1}_{[0,t]}(.)$ we can write
using Lemma \ref{I2} 
\begin{align}
F_{T}& =\frac{1}{\sqrt{T}}%
\int_{0}^{T}I_{1}^{W^{1}}(f_{t})I_{1}^{W^{2}}(f_{t})dt  \label{FT} \\
& =I_{2}^{W}(h_{T}),  \notag
\end{align}%
with $h_{T}\in L^{2}([-T,T]^{2})$ is given 
\begin{align}
h_{T}:[-T,T]^{2}& \rightarrow \mathbb{R}  \notag \\
(x,y)& \mapsto \frac{1}{\sqrt{T}}\int_{0}^{T}\bar{f}_{t}(x)\bar{\bar{f}}%
_{t}(y)dt  \label{noyau-hT}
\end{align}%
On the other hand, we have 
\begin{align*}
h_{T}(x,y)& =\frac{1}{\sqrt{T}}\int_{0}^{T}-e^{-2\theta t}e^{\theta
x}e^{-\theta y}\mathbf{1}_{[0,t]}(x)\mathbf{1}_{[-t,0]}(y)dt \\
& =\frac{1}{\sqrt{T}}\int_{0}^{T}-e^{-2\theta t}e^{\theta x}e^{-\theta y}%
\mathbf{1}_{[x\vee -y,T]}(t)\mathbf{1}_{[0,T]}(x)\mathbf{1}_{[-T,0]}(y)dt \\
& =\frac{1}{2\theta }\frac{1}{\sqrt{T}}e^{\theta x}e^{-\theta y}\left[
e^{-2\theta T}-e^{-2\theta (x\vee -y)}\right] \mathbf{1}_{[0,T]}(x)\mathbf{1}%
_{[-T,0]}(y).
\end{align*}%
Note that the kernel $h_{T}$ is not symmetric, in the sequel we will denote $%
\tilde{h}_{T}$ its systematization defined by $\tilde{h}_{T}(x,y):=\frac{1}{2%
}(h_{T}(x,y)+h_{T}(y,x))$. We are now ready to compute the asymptotic
variance of the main term in the numerator of $\rho $.

\begin{lemma}
\label{varianceFT} With $F_{T}$ defined in (\ref{FT}), then 
\begin{equation*}
\left\vert \mathbf{E}[F_{T}^{2}]-\frac{1}{4\theta ^{3}}\right\vert \leqslant 
\frac{C(\theta )}{T},
\end{equation*}%
where $C(\theta ):=\frac{7+8\theta }{16\theta ^{4}}$. In particular, $%
\lim_{T\rightarrow \infty }\mathbf{E}[F_{T}^{2}]=\frac{1}{4\theta ^{3}}$.
\end{lemma}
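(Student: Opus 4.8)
The plan is to compute $\mathbf{E}[F_T^2]$ exactly (or nearly so) using the isometry property of double Wiener integrals, and then estimate the error in replacing that exact expression by $\frac{1}{4\theta^3}$. Since $F_T = I_2^W(h_T)$ with $h_T$ given explicitly in (\ref{noyau-hT}), the isometry property (\ref{Isometryproperty01}) gives $\mathbf{E}[F_T^2] = 2\,\|\tilde h_T\|^2_{L^2(\mathbb{R}^2)}$, where $\tilde h_T$ is the symmetrization. Because the supports of $h_T(x,y)$ (living on $x\ge 0$, $y\le 0$) and $h_T(y,x)$ (living on $x\le 0$, $y\ge 0$) are essentially disjoint (they meet only on the null set $\{x=0\}\cup\{y=0\}$), the cross term in $\|\tilde h_T\|^2$ vanishes and $2\|\tilde h_T\|^2 = 2\cdot\frac14\cdot 2\|h_T\|^2 = \|h_T\|^2_{L^2(\mathbb{R}^2)}$. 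So the whole computation reduces to evaluating $\|h_T\|^2_{L^2([0,T]\times[-T,0])}$.

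Next I would plug in the closed form $h_T(x,y) = \frac{1}{2\theta\sqrt T} e^{\theta x} e^{-\theta y}\big[e^{-2\theta T} - e^{-2\theta(x\vee(-y))}\big]$ on the rectangle $x\in[0,T]$, $y\in[-T,0]$, and write the squared $L^2$ norm as
\begin{equation*}
\mathbf{E}[F_T^2] = \frac{1}{4\theta^2 T}\int_0^T\!\!\int_0^T e^{2\theta x} e^{-2\theta y}\big[e^{-2\theta T} - e^{-2\theta(x\vee y)}\big]^2\,dy\,dx,
\end{equation*}
having substituted $-y\mapsto y$ so both variables run over $[0,T]$. Expanding the square inside gives three terms; the dominant one is the $e^{-4\theta(x\vee y)}$ term. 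I would split the $(x,y)$ square into $\{x\ge y\}$ and $\{x\le y\}$, on each of which $x\vee y$ is one of the variables, reducing everything to iterated elementary exponential integrals. Each such integral produces a constant plus exponentially small remainders in $T$ of the form $e^{-2\theta T}$, $T e^{-2\theta T}$, etc. The leading constant, after dividing by $T$ and carefully collecting the coefficient, should come out to $\frac{1}{4\theta^3}$; indeed one expects the $e^{-4\theta(x\vee y)}$ term alone, integrated, to give $\sim T/(4\theta)$ up to lower order, and $\frac{1}{4\theta^2 T}\cdot\frac{T}{4\theta} = \frac{1}{16\theta^3}$ — so I will need to track all three terms in the expanded square, since a cancellation/combination among them is what yields $\frac14\theta^{-3}$ rather than $\frac{1}{16}\theta^{-3}$.

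The main obstacle, and the only real work, is the bookkeeping: keeping precise track of every non-exponentially-small term to confirm the constant is exactly $\frac{1}{4\theta^3}$, and then bounding every remainder term by $C(\theta)/T$ with the stated constant $C(\theta) = \frac{7+8\theta}{16\theta^4}$. The remainders are all of the form (polynomial in $T$)$\times e^{-2\theta T}$ divided by $T$, hence $o(1/T)$, but to get the clean bound $C(\theta)/T$ one uses the crude estimates $e^{-2\theta T}\le 1$ and $T e^{-2\theta T}\le \frac{1}{2\theta e} \le \frac{1}{2\theta}$ (or even just $Te^{-2\theta T}\le \frac1{2\theta}$), absorbing everything into a single $1/T$ bound. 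Pinning down the exact numerator $7+8\theta$ requires assembling the worst-case constants from each of the handful of remainder integrals; this is routine but must be done carefully. The final sentence, $\lim_{T\to\infty}\mathbf{E}[F_T^2] = \frac{1}{4\theta^3}$, is then an immediate consequence of the displayed inequality.
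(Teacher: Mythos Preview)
Your approach is correct and essentially identical to the paper's: compute $\mathbf{E}[F_T^2]=2\|\tilde h_T\|^2=\|h_T\|^2$ using the disjoint-support observation, change variables to get a double integral over $[0,T]^2$, and then split and evaluate the elementary exponential integrals to isolate the leading $\frac{1}{4\theta^3}$ and bound the remainder. The paper streamlines the bookkeeping slightly by integrating out the inner variable first and then splitting the result into two pieces $A_1(T)+A_2(T)$ with $|A_1-\tfrac{1}{4\theta^3}|\le \tfrac{5}{16\theta^4 T}$ and $|A_2|\le \tfrac{1+4\theta}{8\theta^4 T}$, which sum to the stated $C(\theta)/T$; one small correction in your display: after substituting $-y\mapsto y$ the factor should be $e^{+2\theta y}$, not $e^{-2\theta y}$.
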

\begin{proof}
We have 
\begin{align*}
\mathbf{E}[F_{T}^{2}]& =\mathbf{E}[I_{2}^{W}(h_{T})^{2}] \\
& =2\times \Vert \tilde{h}_{T}\Vert _{L^{2}([-T,T]^{2})}^{2} \\
& =\frac{1}{T}\frac{1}{4\theta ^{2}}\int_{-T}^{0}\int_{0}^{T}e^{2\theta
x}e^{-2\theta y}\left[ e^{-2\theta T}-e^{-2\theta (x\vee -y)}\right] ^{2}dxdy
\\
& =\frac{1}{T}\frac{1}{4\theta ^{2}}\int_{0}^{T}\int_{0}^{T}e^{2\theta
x}e^{2\theta z}\left[ e^{-2\theta T}-e^{-2\theta (x\vee z)}\right] ^{2}dxdz
\\
& =\frac{1}{T}\frac{1}{2\theta ^{2}}\int_{0}^{T}\int_{0}^{z}e^{2\theta
x}e^{2\theta z}\left[ e^{-2\theta T}-e^{-2\theta z}\right] ^{2}dxdz \\
& =\frac{1}{T}\frac{1}{4\theta ^{3}}\left[ \int_{0}^{T}(e^{-2\theta
(T-y)}-1)^{2}dy-\int_{0}^{T}e^{-2\theta y}(e^{-2\theta (T-y)}-1)^{2}dy\right]
\\
& =:A_{1}(T)+A_{2}\left( T\right) 
\end{align*}%
where%
\begin{align*}
|A_{1}(T)-\frac{1}{4\theta ^{3}}|& :=\left\vert \frac{1}{T}\frac{1}{4\theta
^{3}}\int_{0}^{T}(e^{-2\theta (T-y)}-1)^{2}dy-\frac{1}{4\theta ^{3}}%
\right\vert  \\
& =\left\vert \frac{1}{T}\frac{1}{4\theta ^{3}}[\frac{1}{4\theta }%
(1-e^{-4\theta T})+\frac{1}{\theta }(e^{-2\theta T}-1)+T]-\frac{1}{4\theta
^{3}}\right\vert  \\
& \leqslant \frac{5}{16\theta ^{4}}\times \frac{1}{T},
\end{align*}%
and%
\begin{align*}
|A_{2}(T)|& :=\left\vert \frac{1}{T}\frac{1}{4\theta ^{3}}%
\int_{0}^{T}e^{-2\theta y}(e^{-2\theta (T-y)}-1)^{2}dy\right\vert  \\
& \leqslant \frac{1}{T}\frac{1}{8\theta ^{4}}(1-e^{-4\theta T})+\frac{1}{%
2\theta ^{3}}e^{-2\theta T} \\
& \leqslant \frac{(1+4\theta )}{8\theta ^{4}}\times \frac{1}{T}.
\end{align*}
\end{proof}

\begin{proposition}
\label{cvg-loi-FT} Let $F^\theta_T := 2 \theta^{3/2} F_T$ and $N \sim 
\mathcal{N}(0,1)$, then we have 
\begin{equation*}
d_{Kol}(F^\theta_{T},N) \leqslant \frac{c(\theta)}{\sqrt{T}}.
\end{equation*}
where $c(\theta) := \sqrt{(2 + \frac{7}{4 \theta})^2 + \frac{3^3}{4 \theta}}$%
. Consequently $F_{T} \overset{\mathcal{L}}{\longrightarrow} \mathcal{N}%
\left(0, \frac{1}{4 \theta^3}\right)$  as $T \rightarrow + \infty$.
\end{proposition}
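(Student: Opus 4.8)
The plan is to apply the Malliavin--Stein Kolmogorov bound (\ref{est-kol}) to the second-chaos variable $F^\theta_T$. Since $F_T = I_2^W(h_T) = I_2^W(\tilde h_T)$, we have $F^\theta_T = I_2^W(g_T)$ with $g_T := 2\theta^{3/2}\tilde h_T \in \mathcal H^{\odot 2}$, which is centered (being in a chaos of positive order), so (\ref{est-kol}) with $q=2$ gives
\[
d_{Kol}(F^\theta_T,N)^2 \;\leqslant\; \mathbf{E}\Big[\big(1-\tfrac12\|DF^\theta_T\|_{\mathcal H}^2\big)^2\Big].
\]
Setting $U:=\tfrac12\|DF^\theta_T\|_{\mathcal H}^2$ and using $\mathbf{E}[(1-U)^2] = (1-\mathbf{E}[U])^2 + \var(U)$, the task splits into a bias term and a variance term, each handled by the standard structure of variables in the second Wiener chaos.

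For the bias term I would use $\mathbf{E}[U] = \tfrac12\mathbf{E}\|DF^\theta_T\|_{\mathcal H}^2 = \mathbf{E}[(F^\theta_T)^2] = 4\theta^3\,\mathbf{E}[F_T^2]$, so that Lemma \ref{varianceFT} yields
\[
|1-\mathbf{E}[U]| = 4\theta^3\Big|\mathbf{E}[F_T^2]-\tfrac1{4\theta^3}\Big| \leqslant 4\theta^3\,\frac{C(\theta)}{T} = \Big(2+\frac{7}{4\theta}\Big)\frac1T \leqslant \Big(2+\frac{7}{4\theta}\Big)\frac1{\sqrt T}
\]
for $T\geqslant1$; this is the first term inside $c(\theta)$. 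For the variance term, differentiating $F^\theta_T=I_2^W(g_T)$ gives $D_rF^\theta_T = 2I_1^W(g_T(\cdot,r))$, and the product formula (\ref{produit}) then yields $\tfrac12\|DF^\theta_T\|_{\mathcal H}^2 = \mathbf{E}[(F^\theta_T)^2] + 2\,I_2^W(g_T\otimes_1 g_T)$, where $g_T\otimes_1 g_T$ denotes the order-one contraction. Since $g_T$ is symmetric, $g_T\otimes_1 g_T$ is symmetric, hence
\[
\var(U) = 4\,\mathbf{E}\big[I_2^W(g_T\otimes_1 g_T)^2\big] = 8\,\|g_T\otimes_1 g_T\|_{L^2(\mathbb R^2)}^2 = 128\,\theta^6\,\|\tilde h_T\otimes_1 \tilde h_T\|_{L^2(\mathbb R^2)}^2
\]
(equivalently, $\var(U) = \tfrac16\kappa_4(F^\theta_T)$, the normalized fourth cumulant).

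It then remains to show $\|\tilde h_T\otimes_1 \tilde h_T\|_{L^2(\mathbb R^2)}^2 \leqslant \frac{3^3}{512\,\theta^7\,T}$, which supplies the second term $\frac{3^3}{4\theta}$ inside $c(\theta)$ and gives $d_{Kol}(F^\theta_T,N)\leqslant c(\theta)/\sqrt T$. This contraction estimate is the computational heart of the proof, and I expect it to be the main obstacle: one substitutes the explicit formula for $h_T(x,y)$ found above (the product $e^{\theta x}e^{-\theta y}\big[\,e^{-2\theta T}-e^{-2\theta(x\vee(-y))}\,\big]$ times indicators), symmetrizes, forms the order-one contraction (an integral over the common variable of a product of four exponential-and-max factors on $[-T,T]$), and integrates the square over the two remaining variables. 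Each resulting piece is an elementary exponential integral; one retains the exact $T^{-1}$ leading term, bounds the exponentially small remainders, and tracks the $\theta$-dependence so that the sharp constant $3^3/512$ emerges — the bookkeeping is heavy because both the symmetrization and the region split induced by $x\vee(-y)$ multiply the number of terms. Once the bias and variance bounds are in place, $c(\theta)/\sqrt T \to 0$ shows $F^\theta_T = 2\theta^{3/2}F_T \Rightarrow \mathcal N(0,1)$, and therefore $F_T \Rightarrow \mathcal N\big(0,\tfrac1{4\theta^3}\big)$.
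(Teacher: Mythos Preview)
Your overall architecture matches the paper exactly: apply (\ref{est-kol}) with $q=2$, split $\mathbf E[(1-U)^2]$ into a bias term handled by Lemma~\ref{varianceFT} and a variance term equal to $2^7\theta^6\|\tilde h_T\otimes_1\tilde h_T\|_{L^2}^2$, and then bound the contraction norm. The constants you track are the ones the paper obtains.

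The one substantive divergence is your plan for the contraction estimate. You propose to plug in the closed form of $h_T$ and grind through the symmetrization and the case splits from $x\vee(-y)$; this is doable in principle but, as you anticipate, unpleasant, and extracting the precise constant $3^3/(2^9\theta^7)$ that way is delicate. The paper avoids this entirely: it goes back to the representation $h_T=\frac1{\sqrt T}\int_0^T \bar f_t\otimes\bar{\bar f}_t\,dt$, so that the contraction unwinds into a four-fold integral of products of $\langle f_r,f_s\rangle=\mathbf E[X_i(r)X_i(s)]$; each such factor is dominated by the stationary covariance $Q(r-s)=\frac1{2\theta}e^{-\theta|r-s|}$, and after a change of variables the whole thing is bounded by $\frac1{4T}\|Q_T*Q_T\|_{L^2}^2\leqslant \frac1{4T}\|Q_T\|_{L^{4/3}}^4$ via Young's convolution inequality. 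Computing $\|Q_T\|_{L^{4/3}}^4$ is a one-line exponential integral and yields exactly $3^3/(2^7\theta^7)$, hence the $\frac{3^3}{4\theta}$ term in $c(\theta)$. So the paper's route buys you a clean, short computation with the sharp constant for free; your route would eventually arrive at the same place but at the cost of a long case analysis.
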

\begin{proof}
We will use the estimate (\ref{est-kol}) recalled in the preliminaries in
order to prove this proposition. We have $D_{t}F_{T}^{\theta }=4\theta
^{3/2}I_{1}^{W}(\tilde{h}_{T}(.,t))$, $t\in \lbrack -T,T]$, hence 
\begin{align*}
\frac{1}{2}\Vert DF_{T}^{\theta }\Vert _{L^{2}([-T,T])}^{2}& =\frac{1}{2}%
\int_{-T}^{T}(D_{t}F_{T}^{\theta })^{2}dt \\
& =8\theta ^{3}\int_{-T}^{T}I_{1}^{W}(\tilde{h}_{T}(.,t))^{2}dt \\
& =8\theta ^{3}\left[ \int_{-T}^{T}I_{2}^{W}(\tilde{h}_{T}(.,t)\otimes 
\tilde{h}_{T}(.,t))dt+\int_{-T}^{T}\Vert \tilde{h}_{T}(.,t)\Vert
_{L^{2}([-T,T])}^{2}\right]  \\
& =8\theta ^{3}\left[ I_{2}^{W}(\tilde{h}_{T}\otimes _{1}\tilde{h}%
_{T})+\Vert \tilde{h}_{T}\Vert _{L^{2}([-T,T]^{2})}^{2}\right] ,
\end{align*}%
where we used the product formula (\ref{produit}) and the fact that the
kernel $\tilde{h}_{T}\otimes _{1}\tilde{h}_{T}$ is symmetric. Thus 
\begin{equation}
\mathbf{E}\left[ (1-\frac{1}{2}\Vert DF_{T}^{\theta }\Vert
_{L^{2}([-T,T])}^{2})^{2}\right] =(\mathbf{E}[(F_{T}^{\theta
})^{2}]-1)^{2}+2^{7}\theta ^{6}\times \Vert \tilde{h}_{T}\otimes _{1}\tilde{h%
}_{T}\Vert _{L^{2}([-T,T]^{2})}^{2}  \label{In1}
\end{equation}%
We have, 
\begin{align*}
& (\tilde{h}_{T}\otimes _{1}\tilde{h}_{T})(x,y) \\
& =\int_{[-T,T]}\tilde{h}_{T}(x,z)\tilde{h}_{T}(y,z)dz \\
& =\frac{1}{4}\int_{[-T,0]}h_{T}(x,z)h_{T}(y,z)\mathbf{1}_{[0,T]}(x)\mathbf{1%
}_{[0,T]}(y)dz+\frac{1}{4}\int_{[0,T]}h_{T}(z,x)h_{T}(z,y)\mathbf{1}%
_{[-T,0]}(x)\mathbf{1}_{[-T,0]}(y)dz
\end{align*}%
Hence 
\begin{align}
& \Vert \tilde{h}_{T}\otimes _{1}\tilde{h}_{T}\Vert _{L^{2}([-T,T]^{2})}^{2}
\label{inegalite-contr} \\
& \leqslant \frac{1}{8}\int_{[0,T]^{2}}\left(
\int_{[-T,0]}h_{T}(x,z)h_{T}(y,z)\right) ^{2}dxdy+\frac{1}{8}%
\int_{[-T,0]^{2}}\left( \int_{[0,T]}h_{T}(z,x)h_{T}(z,y)\right) ^{2}dxdy 
\notag
\end{align}%
On the other hand by Fubini's theorem 
\begin{align*}
& \int_{[0,T]^{2}}\left( \int_{[-T,0]}h_{T}(x,z)h_{T}(y,z)dz\right) ^{2}dxdy=%
\frac{1}{T^{2}}\int_{[0,T]^{2}}\left[ \int_{[-T,0]}\int_{[0,T]^{2}}\bar{f}%
_{r}(x)\bar{\bar{f}}_{r}(z)\bar{f}_{s}(y)\bar{\bar{f}}_{s}(z)drdsdz\right]
^{2}dxdy \\
& =\frac{1}{T^{2}}\int_{[0,T]^{2}}\left( \int_{[0,T]^{2}}\bar{f}_{r}(x)\bar{f%
}_{s}(y)\langle \bar{\bar{f}}_{r},\bar{\bar{f}}_{s}\rangle
_{L^{2}([-T,0])}drds\right) ^{2}dxdy \\
& =\frac{1}{T^{2}}\int_{[0,T]^{2}}\int_{[0,T]^{4}}\bar{f}_{r}(x)\bar{f}%
_{s}(y)\bar{f}_{v}(x)\bar{f}_{u}(y)\langle \bar{\bar{f}}_{r},\bar{\bar{f}}%
_{s}\rangle _{L^{2}([-T,0])}\langle \bar{\bar{f}}_{v},\bar{\bar{f}}%
_{u}\rangle _{L^{2}([-T,0])}drdsdudvdxdy
\end{align*}%
Using the fact that the other term of (\ref{inegalite-contr}) can be treated
similarly and that $\langle \bar{\bar{f}}_{r},\bar{\bar{f}}_{s}\rangle
_{L^{2}([-T,0])}=\langle {\bar{f}}_{r},{\bar{f}}_{s}\rangle _{L^{2}([0,T])}=%
\mathbf{E}[X^{i}(r)X^{i}(s)]$, $i=1,2$, we get 
\begin{align*}
& \Vert \tilde{h}_{T}\otimes _{1}\tilde{h}_{T}\Vert _{L^{2}([-T,T]^{2})}^{2}
\\
& \leqslant \frac{1}{4}\frac{1}{T^{2}}\int_{[0,T]^{4}}\mathbf{E}%
[X^{i}(r)X^{i}(v)]\mathbf{E}[X^{i}(s)X^{i}(u)]\mathbf{E}[X^{i}(r)X^{i}(s)]%
\mathbf{E}[X^{i}(v)X^{i}(u)]drdsdudv
\end{align*}%
On the other hand, since for $i=1,2$, $\mathbf{E}[X^{i}(r)X^{i}(s)]=\frac{%
e^{-\theta (r+s)}}{2\theta }[e^{2\theta (r\wedge s)}-1]\leqslant \frac{1}{%
2\theta }e^{-\theta |r-s|}=\mathbf{E}[Z_{i}(r)Z_{i}(s)]:=Q(r-s)$, where $%
Z_{i}(r):=\int_{-\infty }^{r}e^{-\theta (r-t)}dW^{i}(t)$, $i=1,2$ we get 
\begin{align}
& \Vert \tilde{h}_{T}\otimes _{1}\tilde{h}_{T}\Vert _{L^{2}([-T,T]^{2})}^{2}
\\
& \leqslant \frac{1}{4}\frac{1}{T^{2}}%
\int_{[0,T]^{4}}Q(u-v)Q(v-r)Q(r-s)Q(s-u)dudvdrds \\
& =\frac{1}{4}\frac{1}{T^{2}}\int_{[0,T]^{2}}dudr\int_{\mathbb{R}%
^{2}}dvds Q _{T}(u-v) Q_{T}(s-r) Q_{T}(u-s)  Q_{T}(v-r) \\
& =\frac{1}{4}\frac{1}{T^{2}}\int_{[0,T]^{2}}dudr\int_{\mathbb{R}%
^{2}}dvds Q_{T}(y) Q_{T}(u-r-x) Q_{T}(x)  Q_{T}(u-r-y),
\end{align}%
where $Q_{T}(x):=|Q(x)|\mathbf{1}_{\{|x|\leqslant T\}}$ and we used the
change of variables $y=u-v$, $x=u-s$. Therefore, applying Young's
inequality, we can conclude 
\begin{eqnarray}
\Vert \tilde{h}_{T}\otimes _{1}\tilde{h}_{T}\Vert _{L^{2}([-T,T]^{2})}^{2}
&\leqslant &\frac{1}{4}\frac{1}{T^{2}}\int_{[0,T]^{2}}dudr(Q_{T}\ast
Q_{T})(u-r)^{2}  \notag  \label{estim-norm-contr} \\
&\leqslant &\frac{1}{4}\frac{1}{T}\int_{\mathbb{R}}(Q_{T}\ast Q_{T})(z)^{2}dz
\notag \\
&=&\frac{1}{4}\frac{1}{T}\Vert Q_{T}\ast Q_{T}\Vert _{L^{2}(\mathbb{R})}^{2}
\notag \\
&\leqslant &\frac{1}{4}\frac{1}{T}\Vert Q_{T}\Vert _{L^{4/3}(\mathbb{R}%
)}^{4}=\frac{1}{4}\frac{1}{T}\left( \int_{[-T,T]}|Q(t)|^{4/3}dt\right) ^{3/4}
\notag \\
&=&\frac{1}{4}\frac{1}{T}\left( \frac{3^{3}}{2^{7}\theta ^{7}}(1-e^{-4\theta
T/3})^{3}\right) .
\end{eqnarray}%
The desired result follows using (\ref{est-kol}) and the estimates (\ref{In1}%
), (\ref{estim-norm-contr}) and Lemma \ref{FT}.
\end{proof}

We will need the following Proposition due to Michel and Pfanzagl (1971) 
\cite{MP} in the sequel which gives upper bounds for Kolmogorov's distance
between respectively the sum and the ratio of two random variables and a
standard Gaussian random variable.

\begin{proposition}
\label{estim-kol}  Let $X$, $Y$ and $Z$ be three random variables defined on
a probability space $(\Omega, \mathcal{F}, \mathbf{P})$ such that $\mathbf{P}%
(Z>0) = 1$. Then, for all $\varepsilon > 0$, we have 

\begin{enumerate}
\item $d_{Kol}(X+Y, N) \leqslant d_{Kol}(X,N) + \mathbf{P}( |Y| >
\varepsilon) + \varepsilon$. 

\item $d_{Kol}(\frac{X}{Z}, N) \leqslant d_{Kol}(X,N) +\mathbf{P}( |Z-1| >
\varepsilon) + \varepsilon .$ 
\end{enumerate}

where $N \sim \mathcal{N}(0,1).$
\end{proposition}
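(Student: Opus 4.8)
Both inequalities follow from the same conditioning argument, which is the one used by Michel and Pfanzagl; the plan is to present it in full for the sum and then point out the single additional estimate the ratio requires. Fix $z\in\mathbf{R}$ and $\varepsilon>0$. For part 1, write $\mathbf{P}(X+Y\leqslant z)=\mathbf{P}(X+Y\leqslant z,\,|Y|\leqslant\varepsilon)+\mathbf{P}(X+Y\leqslant z,\,|Y|>\varepsilon)$, bound the second summand by $\mathbf{P}(|Y|>\varepsilon)$, and note that on $\{|Y|\leqslant\varepsilon\}$ one has $X=(X+Y)-Y\leqslant z+\varepsilon$; hence $\mathbf{P}(X+Y\leqslant z)\leqslant\mathbf{P}(X\leqslant z+\varepsilon)+\mathbf{P}(|Y|>\varepsilon)\leqslant\mathbf{P}(N\leqslant z+\varepsilon)+d_{Kol}(X,N)+\mathbf{P}(|Y|>\varepsilon)$, and since the standard normal density is bounded by $(2\pi)^{-1/2}<1$ we obtain $\mathbf{P}(N\leqslant z+\varepsilon)\leqslant\mathbf{P}(N\leqslant z)+\varepsilon$. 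The mirror inclusion $\{X\leqslant z-\varepsilon\}\cap\{|Y|\leqslant\varepsilon\}\subseteq\{X+Y\leqslant z\}$ yields $\mathbf{P}(X+Y\leqslant z)\geqslant\mathbf{P}(N\leqslant z)-\varepsilon-d_{Kol}(X,N)-\mathbf{P}(|Y|>\varepsilon)$, and taking $\sup_{z}$ of the absolute difference with $\mathbf{P}(N\leqslant z)$ proves part 1.

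For part 2 I would use that $\mathbf{P}(Z>0)=1$, so that almost surely $\{X/Z\leqslant z\}=\{X\leqslant zZ\}$. Conditioning on $\{|Z-1|\leqslant\varepsilon\}$ and using $|zZ-z|=|z|\,|Z-1|\leqslant|z|\varepsilon$ on that event, one gets $\{X/Z\leqslant z\}\cap\{|Z-1|\leqslant\varepsilon\}\subseteq\{X\leqslant z+|z|\varepsilon\}$ and $\{X\leqslant z-|z|\varepsilon\}\cap\{|Z-1|\leqslant\varepsilon\}\subseteq\{X/Z\leqslant z\}$. As in part 1 this sandwiches $\mathbf{P}(X/Z\leqslant z)$: it is at most $\mathbf{P}(N\leqslant z+|z|\varepsilon)+d_{Kol}(X,N)+\mathbf{P}(|Z-1|>\varepsilon)$ and at least $\mathbf{P}(N\leqslant z-|z|\varepsilon)-d_{Kol}(X,N)-\mathbf{P}(|Z-1|>\varepsilon)$, so it remains only to show the displacement errors $\mathbf{P}(z<N\leqslant z+|z|\varepsilon)$ and $\mathbf{P}(z-|z|\varepsilon<N\leqslant z)$ are each at most $\varepsilon$, uniformly in $z$.

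This normal-tail estimate is the one genuine, if elementary, point, precisely because the shift here is the $z$-dependent quantity $|z|\varepsilon$ rather than $\varepsilon$. It is trivial for $\varepsilon\geqslant1$ (then $d_{Kol}\leqslant1\leqslant\varepsilon$), and for $\varepsilon\in[\tfrac12,1)$ the interval in question has length $|z|\varepsilon$ and lies in a single half-line, hence carries $N$-mass at most $\tfrac12\leqslant\varepsilon$. For $\varepsilon\in(0,\tfrac12)$ one uses monotonicity of the normal density $\varphi$ on each half-line: the $N$-mass of the interval is at most $|z|\varepsilon$ times the value of $\varphi$ at the endpoint nearer the origin, whose absolute value is at least $|z|(1-\varepsilon)$, giving a bound of $\tfrac{\varepsilon}{1-\varepsilon}\sup_{t\in\mathbf{R}}|t|\varphi(t)=\tfrac{\varepsilon}{1-\varepsilon}(2\pi e)^{-1/2}$, which for $\varepsilon<\tfrac12$ is below $\varepsilon$. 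Feeding these bounds into the sandwich gives $|\mathbf{P}(X/Z\leqslant z)-\mathbf{P}(N\leqslant z)|\leqslant d_{Kol}(X,N)+\mathbf{P}(|Z-1|>\varepsilon)+\varepsilon$, and $\sup_{z}$ completes part 2. I anticipate that the only things requiring care are the sign bookkeeping in the two inclusions of part 2 and in the endpoint comparison above; note also that carrying the shift as $|z|\varepsilon$, rather than absorbing the $|z|$ into $|X|$ (which would replace $\varepsilon$ by something unbounded), is exactly what keeps the constant in front of $\varepsilon$ equal to $1$.
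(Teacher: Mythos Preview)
The paper does not give its own proof of this proposition; it merely states the result and attributes it to Michel and Pfanzagl (1971). Your argument is correct and is essentially the classical one: for part~1 the splitting on $\{|Y|\leqslant\varepsilon\}$ together with the Lipschitz bound $|\Phi(z+\varepsilon)-\Phi(z)|\leqslant(2\pi)^{-1/2}\varepsilon<\varepsilon$ for the standard normal c.d.f.\ is exactly what is needed, and for part~2 your handling of the $z$-dependent shift $|z|\varepsilon$ via the elementary estimate $\sup_{t}|t|\varphi(t)=(2\pi e)^{-1/2}$ (together with the half-line observation for $\varepsilon\geqslant\tfrac12$) does recover the constant~$1$ in front of $\varepsilon$. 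The sign bookkeeping in the two inclusions for part~2 checks out in both cases $z\gtrless0$, so there is nothing to add.
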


\begin{proposition}
\label{exp} Let $Y$ be a r.v. such that $Y = N \times N^{\prime}$ where $N
\sim \mathcal{N}(0,\sigma^{2}_1)$ and $N^{\prime} \sim \mathcal{N}%
(0,\sigma^{2}_2)$ two independent Gaussian r.v defined on a probability
space $(\Omega, \mathcal{F}, \mathbf{P})$. Then, there exists a constant $%
\beta > \frac{2 \sqrt{3}}{3} \sigma_1 \sigma_2$ such that 
\begin{equation*}
\mathbf{E}\left[e^{\frac{Y}{\beta}} \right] < 2.
\end{equation*}
Moreover, there exists a constant $C> \frac{\sqrt{3}}{3} \pi $ such that $%
\beta < C \times \mathbf{E}[|Y|]$.
\end{proposition}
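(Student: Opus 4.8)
The plan is to compute the exponential moment $\mathbf{E}[e^{Y/\beta}]$ in closed form and then read off both assertions directly. By homogeneity I would first reduce to standard normals: writing $N=\sigma_{1}Z$ and $N'=\sigma_{2}Z'$ with $Z,Z'$ independent $\mathcal{N}(0,1)$, one has $Y=\sigma_{1}\sigma_{2}\,ZZ'$, so that for every $\beta>0$ and $\lambda:=\sigma_{1}\sigma_{2}/\beta$,
\begin{equation*}
\mathbf{E}\left[e^{Y/\beta}\right]=\mathbf{E}\left[e^{\lambda ZZ'}\right].
\end{equation*}

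The core of the argument is the exact identity $\mathbf{E}[e^{\lambda ZZ'}]=(1-\lambda^{2})^{-1/2}$, valid for $|\lambda|<1$, which I would prove by conditioning on $Z'$ and using the Gaussian moment generating function, $\mathbf{E}[e^{\lambda ZZ'}\mid Z']=e^{\lambda^{2}(Z')^{2}/2}$, followed by the chi-square moment generating function $\mathbf{E}[e^{a(Z')^{2}}]=(1-2a)^{-1/2}$ (valid for $a<1/2$), taken with $a=\lambda^{2}/2$. Requiring $\mathbf{E}[e^{Y/\beta}]=(1-\lambda^{2})^{-1/2}<2$ is then equivalent to $1-\lambda^{2}>1/4$, i.e. $\lambda<\sqrt{3}/2$ (which lies comfortably inside the admissible range $|\lambda|<1$), i.e. $\beta>\frac{2}{\sqrt{3}}\sigma_{1}\sigma_{2}=\frac{2\sqrt{3}}{3}\sigma_{1}\sigma_{2}$. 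Hence any $\beta$ strictly exceeding $\frac{2\sqrt{3}}{3}\sigma_{1}\sigma_{2}$---and there is at least one---satisfies the first claim.

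For the ``moreover'' part I would combine this with the elementary moment computation $\mathbf{E}[|Y|]=\mathbf{E}[|N|]\,\mathbf{E}[|N'|]=\sigma_{1}\sqrt{2/\pi}\cdot\sigma_{2}\sqrt{2/\pi}=\frac{2}{\pi}\sigma_{1}\sigma_{2}$. For the chosen value $\beta=\sigma_{1}\sigma_{2}/\lambda$ this yields $\beta=\frac{\pi}{2\lambda}\,\mathbf{E}[|Y|]$; since $\lambda<\sqrt{3}/2$, the factor $\frac{\pi}{2\lambda}$ is strictly larger than $\frac{\pi}{\sqrt{3}}=\frac{\sqrt{3}}{3}\pi$, so choosing $C$ to be any number strictly greater than $\frac{\pi}{2\lambda}$ gives simultaneously $C>\frac{\sqrt{3}}{3}\pi$ and $\beta<C\,\mathbf{E}[|Y|]$, which is the second claim.

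There is no substantial obstacle here: the computation is routine once one thinks of conditioning on $Z'$, and the only care needed is the bookkeeping of strict inequalities---$\mathbf{E}[e^{Y/\beta}]<2$ forces $\lambda<\sqrt{3}/2$ strictly, hence $\beta>\frac{2\sqrt{3}}{3}\sigma_{1}\sigma_{2}$ and $C>\frac{\sqrt{3}}{3}\pi$ strictly---together with the observation that at the threshold $\lambda=\sqrt{3}/2$ both bounds degenerate to equalities ($\beta=\frac{2\sqrt{3}}{3}\sigma_{1}\sigma_{2}=\frac{\sqrt{3}}{3}\pi\,\mathbf{E}[|Y|]$), which is precisely why the strict versions appear in the statement.
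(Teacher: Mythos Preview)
Your proof is correct and follows essentially the same approach as the paper: both compute the closed-form moment generating function $\mathbf{E}[e^{Y/\beta}]=(1-\sigma_1^2\sigma_2^2/\beta^2)^{-1/2}$ and then read off the threshold $\beta>\frac{2\sqrt{3}}{3}\sigma_1\sigma_2$ and the relation to $\mathbf{E}[|Y|]=\frac{2}{\pi}\sigma_1\sigma_2$. The only cosmetic difference is that the paper evaluates the double Gaussian integral directly, whereas you obtain the same identity by conditioning on $Z'$ and invoking the Gaussian and chi-square moment generating functions in turn.
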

\begin{proof}
By the independence of $N$ and $N^{\prime}$, it's easy to check that for any 
$\beta > \sigma_1 \sigma_2$, we have 
\begin{align*}
\mathbf{E}\left[e^{\frac{Y}{\beta}} \right] & = \frac{1}{2 \pi \sigma_1
\sigma_2} \int_{\mathbb{R}^2} e^{\frac{xy}{\beta}} e^{-\frac{x^2}{2
\sigma^2_1}} e^{-\frac{y^2}{2 \sigma^2_2}} dx dy \\
& = \frac{1}{\sqrt{(1- \frac{\sigma^2_1 \sigma^2_2}{\beta^2})}}.
\end{align*}
Thus the constraint $\mathbf{E}\left[e^{\frac{Y}{\beta}} \right] < 2$
implies that $\beta$ should be such that $\beta > \frac{2 \sqrt{3}}{3}
\sigma_1 \sigma_2$. On the other hand since $\mathbf{E}[|Y|] = \mathbf{E}%
[|N|] \times \mathbf{E}[|N^{\prime}|] = \frac{2 \sigma_1 \sigma_2}{\pi}$,
thus there exists a constant $C > \frac{\sqrt{3}}{3} \pi$ such that $\beta <
C \times \mathbf{E}[|Y|]$.
\end{proof}

\begin{corollary}
\label{kol-sum} Let $X$,$Y$ be two r.v. defined on a probability space $%
(\Omega ,\mathcal{F},\mathbf{P})$ such that $Y=N\times N^{\prime }$ where $%
N\sim \mathcal{N}(0,\sigma _{1}^{2})$ and $N^{\prime }\sim \mathcal{N}%
(0,\sigma _{2}^{2})$. Then, there exists a constant $\frac{2\sqrt{3}}{3}%
\sigma _{1}\sigma _{2}<\beta \leqslant \left( (C\times \mathbf{E}%
[|Y|])\wedge 4\right) $ such that 
\begin{equation*}
d_{Kol}(X+Y,\mathcal{N}(0,1))\leqslant d_{Kol}(X,\mathcal{N}(0,1))+\beta
\left( 1+\ln \left( \frac{4}{\beta }\right) \right) .
\end{equation*}
\end{corollary}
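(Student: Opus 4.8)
The plan is to apply Proposition~\ref{estim-kol}(1) with the random variables $X$ and $Y$ as given, and then to optimize the free parameter $\varepsilon$. By part (1) of Proposition~\ref{estim-kol}, for every $\varepsilon>0$,
\[
d_{Kol}(X+Y,\mathcal{N}(0,1))\leqslant d_{Kol}(X,\mathcal{N}(0,1))+\mathbf{P}(|Y|>\varepsilon)+\varepsilon.
\]
The key is to bound the tail $\mathbf{P}(|Y|>\varepsilon)$ efficiently. Since $Y=N\times N'$ is a product of two centered Gaussians, its tail is essentially exponential (as the introduction points out, it behaves like a chi-square tail on the logarithmic scale), so the right tool is an exponential Markov inequality rather than a moment/H\"older bound. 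Concretely, by Proposition~\ref{exp} there is a constant $\beta$ with $\frac{2\sqrt3}{3}\sigma_1\sigma_2<\beta\leqslant C\cdot\mathbf{E}[|Y|]$ such that $\mathbf{E}[e^{Y/\beta}]<2$; by symmetry of $Y$ the same holds for $e^{-Y/\beta}$, hence $\mathbf{E}[e^{|Y|/\beta}]\leqslant \mathbf{E}[e^{Y/\beta}]+\mathbf{E}[e^{-Y/\beta}]<4$. Markov's inequality then gives
\[
\mathbf{P}(|Y|>\varepsilon)\leqslant 4\,e^{-\varepsilon/\beta}.
\]

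Substituting this in, we get $d_{Kol}(X+Y,\mathcal{N}(0,1))\leqslant d_{Kol}(X,\mathcal{N}(0,1))+4e^{-\varepsilon/\beta}+\varepsilon$ for all $\varepsilon>0$. Now I minimize $g(\varepsilon):=4e^{-\varepsilon/\beta}+\varepsilon$ over $\varepsilon>0$: setting $g'(\varepsilon)=-\tfrac{4}{\beta}e^{-\varepsilon/\beta}+1=0$ yields $e^{-\varepsilon/\beta}=\beta/4$, i.e. $\varepsilon^{*}=\beta\ln(4/\beta)$, which is positive precisely when $\beta<4$ — this is why the corollary imposes $\beta\leqslant\big((C\cdot\mathbf{E}[|Y|])\wedge 4\big)$. (If $\beta$ from Proposition~\ref{exp} happens to exceed $4$, one simply replaces it by $4$, at which point $\mathbf{E}[e^{|Y|/4}]<4$ still holds since the constraint only gets easier for larger $\beta$; this is the role of the $\wedge 4$.) Plugging $\varepsilon^{*}$ back in gives $g(\varepsilon^{*})=4\cdot\frac{\beta}{4}+\beta\ln(4/\beta)=\beta\big(1+\ln(4/\beta)\big)$, which is exactly the claimed bound.

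The only mild subtlety — and the step I would be most careful about — is the passage from the one-sided bound $\mathbf{E}[e^{Y/\beta}]<2$ of Proposition~\ref{exp} to the two-sided bound $\mathbf{E}[e^{|Y|/\beta}]<4$, which uses that the law of $Y=N N'$ is symmetric about $0$ (true because $N$ is symmetric and independent of $N'$), together with $e^{|y|/\beta}\leqslant e^{y/\beta}+e^{-y/\beta}$; and then checking that $\varepsilon^{*}=\beta\ln(4/\beta)\geqslant 0$ is guaranteed by $\beta\leqslant 4$. Everything else is the elementary calculus optimization above. No new ideas beyond Propositions~\ref{estim-kol} and~\ref{exp} are needed.
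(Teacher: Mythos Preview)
Your proposal is correct and follows essentially the same approach as the paper: apply Proposition~\ref{estim-kol}(1), use Proposition~\ref{exp} together with the symmetry of $Y$ and Markov's inequality to obtain $\mathbf{P}(|Y|>\varepsilon)<4e^{-\varepsilon/\beta}$, and then minimize $g_\beta(\varepsilon)=4e^{-\varepsilon/\beta}+\varepsilon$ at $\varepsilon^*=\beta\ln(4/\beta)$. The only cosmetic difference is that the paper writes $\mathbf{P}(|Y|>\varepsilon)=2\,\mathbf{P}(Y>\varepsilon)$ and applies Markov to $e^{Y/\beta}$ directly, whereas you bound $\mathbf{E}[e^{|Y|/\beta}]\leqslant\mathbf{E}[e^{Y/\beta}]+\mathbf{E}[e^{-Y/\beta}]<4$ first; both routes give the identical bound.
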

\begin{proof}
Let $X$, $Y$ be two random variables , then from Michel and Pfanzagl (1971),
for all $\varepsilon > 0$, 
\begin{equation*}
d_{Kol}(X+Y, \mathcal{N}(0,1)) \leqslant d_{Kol}(X,\mathcal{N}(0,1)) + 
\mathbf{P}( |Y| > \varepsilon) + \varepsilon.
\end{equation*}
Since $Y = N \times N^{\prime}$ where $N \sim \mathcal{N}(0,\sigma^{2}_1)$
and $N^{\prime} \sim \mathcal{N}(0,\sigma^{2}_2)$ then, by Proposition \ref%
{exp} and Markov's inequality, we have 
\begin{equation*}
\mathbf{P}( |Y| > \varepsilon) = 2 \times \mathbf{P}(Y > \varepsilon)
\leqslant 2 \times \mathbf{E}\left[e^{\frac{Y}{\beta}}\right] e^{-\frac{%
\varepsilon}{\beta}} < 4 e^{-\frac{\varepsilon}{\beta}}.
\end{equation*}
Thus we can write 
\begin{equation*}
d_{Kol}(X+Y, \mathcal{N}(0,1)) \leqslant d_{Kol}(X, \mathcal{N}(0,1)) +
\inf_{\varepsilon > 0} g_{\beta}(\varepsilon).
\end{equation*}
where $g_{\beta}(\varepsilon) := 4 e^{-\frac{\varepsilon}{\beta}}+
\varepsilon.$ Since $g_{\beta}$ is convex on $\mathbb{R}_{+}$, $\arg
\inf\limits_{\varepsilon > 0} g_{\beta}(\varepsilon) =
\varepsilon^{*}(\beta) = \beta \ln(\frac{4}{\beta})$, $\beta < \left( (C
\times \mathbf{E}[|Y|]) \wedge 4 \right)$, with $C$ the constant from
Proposition \ref{exp}. The desired result follows.
\end{proof}

To prove the convergence in law of $\frac{Y_{12}(T)}{\sqrt{T}}$, recall that 
$T^{-1/2}Y_{12}(T)=F_{T}-\sqrt{T}\bar{X_{1}}(T)\bar{X_{2}}(T).$ We can write 
$\bar{X_{i}}(T):=I_{1}^{W_{i}}(g_{T})$, $i=1,2$ where $g_{T}:=T^{-1}%
\int_{0}^{T}f_{t}dt$ and we have for $i=1,2$ 
\begin{align}
\mathbf{E}[\bar{X_{i}}^{2}(T)]& =\Vert g_{T}\Vert _{L^{2}([0,T])}^{2}  \notag
\\
& =\frac{1}{T^{2}}\int_{0}^{T}(\int_{0}^{T}f_{t}(u)dt)^{2}du  \notag \\
& =\frac{1}{T^{2}}\int_{0}^{T}e^{2\theta u}(\int_{u}^{T}e^{-\theta
t}dt)^{2}du  \notag \\
& =\frac{1}{T^{2}}\frac{1}{\theta ^{2}}\int_{0}^{T}(1-e^{-\theta
(T-u)})^{2}du\leqslant \frac{1}{\theta ^{2}}\frac{1}{T}.  \label{normbarX}
\end{align}%
Hence by the independence of $X_{1}$ and $X_{2}$ and denoting $Y(T):=\sqrt{T}%
\bar{X_{1}}(T)\bar{X_{2}}(T)$, we get 
\begin{equation}
\mathbf{E}[Y(T)^{2}]\leqslant \frac{1}{T}\frac{1}{\theta ^{4}}.
\label{estil-term2-0}
\end{equation}%
Then in virtue of Proposition \ref{exp} and Corollary \ref{kol-sum}, there
exists a constant $\beta $ with $\frac{4\sqrt{3}}{3}\sqrt{T}\theta
^{3/2}\Vert g_{T}\Vert _{L^{2}([0,T])}^{2}<\beta <\frac{C}{\sqrt{T}\theta
^{2}}\wedge 4$ such that 
\begin{equation*}
d_{Kol}\left( \frac{Y_{12}(T)}{\sqrt{T}},\mathcal{N}(0,\frac{1}{4\theta ^{3}}%
)\right) \leqslant d_{Kol}\left( F_{T},\mathcal{N}(0,\frac{1}{4\theta ^{3}}%
)\right) +\beta \left( 1+\ln \left( \frac{4}{\beta }\right) \right) .
\end{equation*}%
On the other hand, since the function $x\mapsto x(1+\ln (\frac{4}{x}))$ is
increasing on $(0,4)$, we have for $T$ large enough

\begin{equation*}
d_{Kol}\left( \frac{Y_{12}(T)}{\sqrt{T}}, \mathcal{N}(0,\frac{1}{4 \theta^3}%
) \right) \leqslant d_{Kol}\left( F_T, \mathcal{N}(0,\frac{1}{4 \theta^3})
\right) + \frac{c(\theta)}{2} \frac{\ln(T)}{\sqrt{T}}.
\end{equation*}
The following proposition follows.

\begin{proposition}
\label{estim-num} There exists a constant $C(\theta)$ depending only on $%
\theta$, such that 
\begin{equation*}
d_{Kol}\left(\frac{Y_{12}(T)}{\sqrt{T}}, \mathcal{N}\left(0,\frac{1}{4
\theta^{3}}\right) \right) \leqslant C(\theta) \times \frac{\ln(T)}{\sqrt{T}}%
.
\end{equation*}
In particular, $\frac{Y_{12}(T)}{\sqrt{T}} \overset{\mathrm{\mathcal{L}}}{%
\longrightarrow }\mathcal{N}(0,\frac{1}{4 \theta^{3}} )$ as $T \rightarrow
+\infty$.
\end{proposition}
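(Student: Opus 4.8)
The plan is to assemble the estimate directly from the pieces already in place, exploiting the decomposition $T^{-1/2}Y_{12}(T)=F_{T}-Y(T)$ with $Y(T):=\sqrt{T}\,\bar{X_1}(T)\bar{X_2}(T)$, which displays the rescaled numerator as a second-chaos term with a good normal approximation (Proposition \ref{cvg-loi-FT}) plus a remainder that is a product of two \emph{independent} centered Gaussians. First I would record that $d_{Kol}$ is invariant under multiplication by a fixed positive scalar, so that $d_{Kol}\big(F_{T},\mathcal{N}(0,\tfrac{1}{4\theta^{3}})\big)=d_{Kol}(F_{T}^{\theta},\mathcal{N}(0,1))\le c(\theta)/\sqrt{T}$, and likewise $d_{Kol}\big(T^{-1/2}Y_{12}(T),\mathcal{N}(0,\tfrac{1}{4\theta^{3}})\big)$ equals the Kolmogorov distance between $2\theta^{3/2}T^{-1/2}Y_{12}(T)=F_{T}^{\theta}-2\theta^{3/2}Y(T)$ and $\mathcal{N}(0,1)$. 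Writing $2\theta^{3/2}Y(T)=\big(a\,\bar{X_1}(T)\big)\big(b\,\bar{X_2}(T)\big)$ with any $a,b>0$ such that $ab=2\theta^{3/2}\sqrt{T}$ preserves the product-normal structure, so Corollary \ref{kol-sum} applies with $X=F_{T}^{\theta}$ and this product playing the role of $Y$.

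Next I would control the size of the constant $\beta$ produced by Corollary \ref{kol-sum}. From (\ref{normbarX}) one has $\mathbf{E}[\bar{X_i}(T)^{2}]=\|g_{T}\|_{L^{2}([0,T])}^{2}\le \tfrac{1}{\theta^{2}T}$, hence the factors above have $\sigma_{1}\sigma_{2}=ab\,\|g_{T}\|_{L^{2}}^{2}=2\theta^{3/2}\sqrt{T}\,\|g_{T}\|_{L^{2}}^{2}\le 2\theta^{-1/2}T^{-1/2}$, and by Proposition \ref{exp}, $\mathbf{E}\big[|2\theta^{3/2}Y(T)|\big]=2\sigma_{1}\sigma_{2}/\pi=O(T^{-1/2})$. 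Therefore Corollary \ref{kol-sum} yields a $\beta$ with $\tfrac{2\sqrt{3}}{3}\sigma_{1}\sigma_{2}<\beta\le \big(C\,\mathbf{E}[|2\theta^{3/2}Y(T)|]\big)\wedge 4$, which, since both the lower and the upper bound are $\asymp\sigma_{1}\sigma_{2}\asymp T^{-1/2}$, forces $\beta$ to be of exact order $T^{-1/2}$ for $T$ large, say $\beta\le c_{1}(\theta)\,T^{-1/2}<4$.

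Finally, since $x\mapsto x\big(1+\ln(4/x)\big)$ is increasing on $(0,4)$, the penalty term $\beta\big(1+\ln(4/\beta)\big)$ is bounded by $c_{1}(\theta)T^{-1/2}\big(1+\ln(4T^{1/2}/c_{1}(\theta))\big)$, which is $O\big(T^{-1/2}\ln T\big)$. Adding this to the bound $c(\theta)/\sqrt{T}$ for $F_{T}$ from Proposition \ref{cvg-loi-FT} gives $d_{Kol}\big(T^{-1/2}Y_{12}(T),\mathcal{N}(0,\tfrac{1}{4\theta^{3}})\big)\le C(\theta)\,T^{-1/2}\ln T$, and the stated convergence in law is then immediate. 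There is no genuine obstacle here: the real content sits in the already-proved Proposition \ref{cvg-loi-FT} and in the optimized decoupling of Corollary \ref{kol-sum}. The only points that require care are tracking the $\theta$-scaling so that the remainder remains a true product of independent normals, and confirming that $\beta$ has order exactly $T^{-1/2}$ (both bounds in Corollary \ref{kol-sum} being $\asymp\sigma_{1}\sigma_{2}\asymp T^{-1/2}$), which is precisely what produces the single $\ln T$ factor rather than a worse correction.
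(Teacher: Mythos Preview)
Your argument is correct and follows essentially the same route as the paper: decompose $T^{-1/2}Y_{12}(T)=F_T-Y(T)$, apply Proposition~\ref{cvg-loi-FT} to $F_T$, and use Corollary~\ref{kol-sum} on the product-normal remainder together with the bound (\ref{normbarX}) to show $\beta=O(T^{-1/2})$, whence the penalty $\beta(1+\ln(4/\beta))=O(T^{-1/2}\ln T)$ via the monotonicity of $x\mapsto x(1+\ln(4/x))$. Your only addition is making explicit the scale-invariance of $d_{Kol}$ and the $a,b$ factorization needed to match the hypotheses of Corollary~\ref{kol-sum}, which the paper leaves implicit.
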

Having just completed the study of the convergence in law of the numerator
in $\rho (T)$, in order to study the convergence in law of $\sqrt{T}\rho (T)$%
, we will use Proposition \ref{estim-kol} assertion 2 and the fact that 
\begin{equation}
\sqrt{\theta }\sqrt{T}\rho (T)=\frac{2\theta ^{3/2}\frac{Y_{12}(T)}{\sqrt{T}}%
}{2\theta \sqrt{\frac{Y_{11}(T)}{T}\times \frac{Y_{22}(T)}{T}}}
\label{decom-rho}
\end{equation}%
to show in the next subsection that the denominator concentrates to the
value 1, and that the behavior of $\sqrt{T}\rho (T)$ is thus given by that
of the numerator above.

\subsection{The denominator term}

Let us denote the denominator term 
\begin{equation}
D(T):=D:=2\theta \sqrt{\frac{Y_{11}(T)\times Y_{22}(T)}{T\times T}},
\label{Zdenom}
\end{equation}%
According to Proposition \ref{estim-kol} assertion 2. we need to estimate $%
\mathbf{P}(|D-1|>\varepsilon )$ for instance for $0<\varepsilon <1$. Using
the fact that $D\geq 0$ a.s. then $|D-1|\leqslant |D^{2}-1|$ a.s. Now using
the shorthand notation $\bar{Y}_{ii}(T):=\frac{Y_{ii}(T)}{T/2\theta }$, $%
i=1,2$, thus, we have a.s. 
\begin{align}
|D-1|\leqslant |D^{2}-1|& \leqslant |\bar{Y}_{11}(T)\bar{Y}_{22}(T)-1| 
\notag \\
& \leqslant |\bar{Y}_{11}(T)-1|\times |\bar{Y}_{22}(T)-1|+|\bar{Y}%
_{11}(T)-1|+|\bar{Y}_{22}(T)-1|  \label{estim-Z-continu}
\end{align}%
Thus, using the fact that $\bar{Y}_{11}(T)$ and $\bar{Y}_{22}(T)$ are equal
in law, we get for any $\varepsilon <1$, 
\begin{align}
\mathbf{P}\left(|D-1|>\varepsilon \right) & \leqslant \mathbf{P}\left( |%
\bar{Y}_{11}(T)-1|>\frac{\varepsilon }{3}\right) +\mathbf{P}\left( |\bar{Y}%
_{22}(T)-1|>\frac{\varepsilon }{3}\right) +2\mathbf{P}\left( |\bar{Y}%
_{11}(T)-1|^{2}>\frac{\varepsilon }{3}\right)   \notag \\
& \leqslant 4\times \mathbf{P}\left( |\bar{Y}_{11}(T)-1|>\frac{\varepsilon }{%
3}\right) .  \label{estimeD}
\end{align}%
We lighten the notation by writing $\varepsilon $ instead of $\varepsilon /3$%
, therefore by Proposition \ref{estim-kol} assertion 2. applied to $\rho (T)$
in (\ref{decom-rho}), we get 
\begin{equation}
d_{Kol}\left( \sqrt{\theta }\sqrt{T}\rho (T),N\right) \leqslant
d_{Kol}\left( 2\theta ^{3/2}\frac{Y_{12}(T)}{\sqrt{T}},N\right) +4\times 
\mathbf{P}\left( |\bar{Y}_{11}(T)-1|>\varepsilon \right) +3\varepsilon .
\label{rhoestim-1}
\end{equation}%
where $N\sim \mathcal{N}(0,1)$.

The next step is to control the term $\mathbf{P}\left( |\bar{Y}%
_{11}(T)-1|>\varepsilon \right) $. We have : 
\begin{align*}
\bar{Y}_{11}(T)& =\frac{2\theta }{T}\int_{0}^{T}\left( X_{1}^{2}(u)-\mathbf{E%
}[X_{1}^{2}(u)]\right) du+\frac{2\theta }{T}\int_{0}^{T}\mathbf{E}%
[X_{1}^{2}(u)]du-2\theta \bar{X}_{1}^{2}(T) \\
& =\frac{2\theta }{T}\int_{0}^{T}\left( (I_{1}^{W_{1}}(f_{u}))^{2}-\Vert
f_{u}\Vert _{L^{2}([0,T])}^{2}\right) du+\frac{1}{T}\int_{0}^{T}\mathbf{E}%
[X_{1}^{2}(u)]du-\bar{X}_{1}^{2}(T) \\
& =2\theta I_{2}^{W_{1}}(k_{T})+\frac{2\theta }{T}\int_{0}^{T}\mathbf{E}%
[X_{1}^{2}(u)]du-2\theta \bar{X}_{1}^{2}(T) \\
& :=A_{\theta }(T)+\mu _{\theta }(T)-2\theta \bar{X}_{1}^{2}(T).
\end{align*}%
where 
\begin{align*}
k_{T}(x,y)& :=\frac{1}{T}\int_{0}^{T}f_{u}^{\otimes 2}(x,y)du \\
& =\frac{1}{T}\int_{0}^{T}e^{-\theta (u-x)}e^{-\theta (u-y)}\mathbf{1}%
_{[0,u]}(x)\mathbf{1}_{[0,u]}(y)du \\
& =\frac{1}{T}\frac{1}{2\theta }e^{\theta x}e^{\theta y}\left( e^{-2\theta
(x\vee y)}-e^{-2\theta T}\right) \mathbf{1}_{[0,T]}(x)\mathbf{1}_{[0,T]}(y)
\end{align*}%
and 
\begin{align*}
\mu _{\theta }(T)& =\frac{2\theta }{T}\int_{0}^{T}\mathbf{E}[X_{1}^{2}(u)]du=%
\frac{2\theta }{T}\int_{0}^{T}\Vert f_{u}\Vert _{L^{2}([0,T]}^{2}du \\
& =\frac{2\theta }{T}\int_{0}^{T}\int_{0}^{T}f_{u}^{2}(t)dtdu \\
& =\frac{2\theta }{T}\int_{0}^{T}\int_{0}^{u}e^{-2\theta (u-t)}dtdu \\
& =\frac{1}{T}\int_{0}^{T}(1-e^{-2\theta u})du \\
& =1-\frac{1}{2\theta T}\left( 1-e^{-2\theta T}\right) 
\end{align*}%
Then we immediately get the mean concentration around 1:%
\begin{equation}
|\mu _{\theta }(T)-1|\leqslant \frac{1}{2\theta T}=O(\frac{1}{T}).
\label{muthetaub}
\end{equation}
For the term $A_{\theta }(T)$, in a similar way to the calculus in the proof
of Proposition \ref{varianceFT} and since $k_{T}$ is symmetric, we get 
\begin{align*}
\mathbf{E}\left[ I_{2}^{W_{1}}(k_{T})^{2}\right] & =2\Vert k_{T}\Vert
_{L^{2}([0,T]^{2})}^{2} \\
& =\frac{1}{T^{2}}\frac{1}{2\theta ^{2}}\int_{[0,T]^{2}}e^{2\theta
x}e^{2\theta y}\left( e^{2\theta (x\vee y)}-e^{-2\theta T}\right) ^{2}dxdy \\
& =\frac{1}{T^{2}}\frac{1}{2\theta ^{3}}\left( \frac{1}{4\theta }%
(1-e^{-4\theta T})+\frac{1}{\theta }(e^{-2\theta T}-1)+T(1+2e^{-2\theta T})-%
\frac{1}{2\theta }(1-e^{-4\theta T})\right)  \\
& \leqslant \frac{1}{2\theta ^{3}}\left( 3+\frac{7}{4\theta }\right) \frac{1%
}{T}.
\end{align*}

Therefore, we have $\text{Var}(A_{\theta }(T))=O(\frac{1}{T})$, since 
\begin{align*}
\text{Var}(A_{\theta }(T))& =\text{Var}(2\theta I_{2}^{W_{1}}(k_{T})^{2}) \\
& =4\theta ^{2}\mathbf{E}\left[ I_{2}^{W_{1}}(k_{T})^{2}\right] \leqslant 
\frac{2}{\theta }\left( 3+\frac{7}{4\theta }\right) \frac{1}{T}.
\end{align*}%
Finally, by equation (\ref{normbarX}), $2\theta \mathbf{E}[\bar{X}%
_{1}^{2}(T)]=O(\frac{1}{T})$. On the other hand, we can write 
\begin{equation*}
\bar{Y}_{11}(T)=\tilde{Y}_{11}(T)+y_{11}(T)
\end{equation*}%
where 
\begin{equation*}
\left\{ 
\begin{array}{ll}
\tilde{Y}_{11}(T):=A_{\theta }(T)-2\theta \left( \bar{X}_{1}^{2}(T)-\mathbf{E%
}\left[ \bar{X}_{1}^{2}(T)\right] \right) , &  \\ 
~~ &  \\ 
y_{11}(T):=\mu _{\theta }(T)-2\theta \mathbf{E}\left[ \bar{X}_{1}^{2}(T)%
\right] . & 
\end{array}%
\right. 
\end{equation*}%
By the product formula (\ref{productformula01}), the r.v. $\tilde{Y}_{11}(T)$
belongs to the second Wiener chaos while $y_{11}(T)$ is deterministic.
Moreover, we have 
\begin{align}
\text{Var}(\tilde{Y}_{11}(T))& =\text{Var}\left( A_{\theta }(T)-2\theta \bar{%
X}_{1}^{2}(T)\right)   \notag \\
& \leqslant 2\text{Var}\left( A_{\theta }(T)\right) +8\theta ^{2}\text{Var}%
\left( \bar{X}_{1}^{2}(T)\right)   \notag \\
& \leqslant \frac{4}{T\theta }\left( 3+\frac{7}{4\theta }\right) +\frac{8^{2}%
}{\theta ^{2}T^{2}}\leqslant \frac{cst(\theta )}{T}:=\left[ \frac{4}{\theta }%
\left( 3+\frac{7}{4\theta }\right) +\frac{8^{2}}{\theta ^{2}}\right] \frac{1%
}{T},  \label{Y11}
\end{align}%
where we used the hypercontractivity property (\ref{hypercontractivity}) on
Wiener chaos for $\bar{X}_{1}^{2}(T)$, since $\text{Var}(\bar{X}_{1}^{2}(T))=%
\mathbf{E}[I_{1}^{W_{1}}(g_{T})^{4}]-\left( \mathbf{E}%
[I_{1}^{W_{1}}(g_{T})^{2}]\right) ^{2}$ and $\mathbf{E}%
[I_{1}^{W_{1}}(g_{T})^{2}]=\mathbf{E}[\bar{X}_{1}^{2}(T)]\leqslant \frac{1}{%
T\theta ^{2}}$. The last estimate plus the estimate (\ref{muthetaub}) on $%
\mu _{\theta }(T)$ established earlier imply that 
\begin{equation}
\left\vert 1-y_{11}(T)\right\vert \leq \frac{5}{2}\frac{1}{\theta T}.
\label{y11}
\end{equation}

With all those estimates in place, we return to our main target to control $%
\mathbf{P}\left( |\bar{Y}_{11}(T)-1|>\varepsilon \right) $ for some $%
0<\varepsilon <1$, Or 
\begin{align*}
\mathbf{P}\left( |\bar{Y}_{11}(T)-1|>\varepsilon \right) & =\mathbf{P}\left(
|\tilde{Y}_{11}(T)+y_{11}(T)-1|>\varepsilon \right)  \\
& =\mathbf{P}\left( \left\{ \tilde{Y}_{11}(T)>\varepsilon
+1-y_{11}(T)\right\} \cup \left\{ -\tilde{Y}_{11}(T)>\varepsilon
+y_{11}(T)-1\right\} \right) 
\end{align*}%
Of course second chaos r.v. are not symmetric, but since we do not know the
sign of $1-y_{11}(T)$ and second Wiener chaos can be skewed in either
direction, there is no loss of efficiency to treat $\tilde{Y}_{11}(T)$ and $-%
\tilde{Y}_{11}(T)$ in the same fashion. Recall the from Proposition 2.7.13
of \cite{NP-book} that any second chaos r.v. $F$ has the following
representation 
\begin{equation*}
F=\sum\limits_{n=1}^{+\infty }\lambda _{n}\left( Z_{n}^{2}-1\right) 
\end{equation*}%
where $\left\{ \lambda _{n},n\geq 1\right\} $ is a sequence of reals for
which $\left\vert \lambda _{n}\right\vert $ is decreasing and $%
(Z_{n})_{n\geq 1}$ are independent standard Gaussian random variables and 
\begin{equation*}
\text{Var}(F)=\sum\limits_{n=1}^{+\infty }|\lambda _{n}|^{2}<+\infty .
\end{equation*}%
Moreover, by the product formula (\ref{productformula01}), if $F$ is a
quadratic functional of a Gaussian process, then $\sum\limits_{n=1}^{+\infty
}\lambda _{n}$ is the expectation of that functional. Therefore with $F=%
\tilde{Y}_{11}(T)$, there exists $\left\{ \lambda _{n}(T),n\geq 1\right\} $
and $(Z_{n})_{n\geq 1}$ iid $\mathcal{N}(0,1)$, such that 
\begin{equation*}
\tilde{Y}_{11}(T)=\sum\limits_{n=1}^{+\infty }\lambda _{n}(T)\left(
Z_{n}^{2}-1\right) .
\end{equation*}%
One immediately checks that the expression to be added to $\tilde{Y}_{11}(T)$
to make it a quadratic functional is $\mu _{\theta }(T)-2\theta \mathbf{E}[%
\bar{X}_{1}^{2}(T)]$ which is equal to $\sum\limits_{n=1}^{+\infty }\lambda
_{n}(T)$. Therefore, the sequence $\left\{ \lambda _{n}(T),n\geq 1\right\} $
satisfies 
\begin{equation*}
\left\{ 
\begin{array}{ll}
\sum\limits_{n=1}^{+\infty }\lambda _{n}^{2}(T)=\text{Var}(\tilde{Y}%
_{11}(T))\leqslant \frac{c(\theta )}{T}, &  \\ 
&  \\ 
|\sum\limits_{n=1}^{+\infty }\lambda _{n}(T)|=1+O(\frac{1}{T}). & 
\end{array}%
\right. 
\end{equation*}%
From the representation we just established, we will set a general global
tail for any r.v. in the second Wiener chaos, which is convenient for our
purposes. Let $Y=\sum\limits_{n=1}^{+\infty }\lambda _{n}(Z_{n}^{2}-1)$ let $%
\sigma :=\sqrt{\text{Var}(Y)}=\sqrt{\sum\limits_{n=1}^{+\infty }|\lambda
_{n}|^{2}}$ and $v=\sum\limits_{n=1}^{+\infty }\lambda _{n}$. Assume that $%
|v|<+\infty $. Let $\beta >0$ which is a constant to be chosen later. Then
by Markov's inequality, we have for all $y$ 
\begin{align}
\mathbf{P}\left( Y>y\right) & \leqslant e^{-\frac{y}{\beta }}\times \mathbf{E%
}\left[ e^{\frac{Y}{\beta }}\right] =e^{-\frac{y}{\beta }}\times \mathbf{E}%
\left[ e^{\frac{1}{\beta }\sum\limits_{n=1}^{+\infty }\lambda
_{n}(Z_{n}^{2}-1)}\right]   \notag \\
& =e^{-\frac{y}{\beta }}\prod_{n=1}^{+\infty }\mathbf{E}\left[ e^{\lambda
_{n}\frac{Z_{n}^{2}}{\beta }}\right] \times e^{-\frac{v}{\beta }}  \notag \\
& =e^{-\frac{y+v}{\beta }}\times \left( \prod_{n=1}^{+\infty }\frac{1}{\sqrt{%
1-\frac{2\lambda _{n}}{\beta }}}\right) ,  \label{produit2}
\end{align}%
This formula requires that $\frac{2\lambda _{n}}{\beta }<1$ for all $n\geq 1$%
, but since the sign of $\lambda _{n}$ is unknown and $|\lambda _{n}|$
decreases, it is sufficient to require that $\beta >2|\lambda _{1}|$. Since $%
\sum\limits_{n=1}^{+\infty }|\lambda _{n}|^{2}=\text{Var}(Y)$, we can say
that $|\lambda _{1}|<\sqrt{\text{Var}(Y)}=\sigma $. Therefore, to be
completely safe we require that $\beta \geq 4\sigma $.We must also check
that the product in (\ref{produit2}) converges. In fact, notice that since $%
\forall u\in \lbrack 0,\frac{1}{2}],-\ln (1-u)\leqslant u+\frac{u^{2}}{2}%
+u^{3}$, thus as $0\leqslant \frac{2\lambda _{n}}{\beta }\leqslant \frac{1}{2%
}$, for all $n$, we have 
\begin{equation*}
\ln \left( \prod_{n=1}^{+\infty }\frac{1}{\sqrt{1-\frac{2\lambda _{n}}{\beta 
}}}\right) \leqslant \frac{1}{2}\sum\limits_{n=1}^{+\infty }\left( \frac{%
2\lambda _{n}}{\beta }+\frac{2\lambda _{n}^{2}}{\beta ^{2}}+\frac{8\lambda
_{n}^{3}}{\beta ^{3}}\right) 
\end{equation*}%
Thus, we get 
\begin{equation}
\mathbf{P}\left( Y>y\right) \leqslant e^{-\frac{y}{\beta }}\times e^{\frac{%
\text{Var}(Y)}{\beta ^{2}}}\times e^{\frac{k_{3}(Y)}{2\beta ^{3}}}.
\label{estimate-y}
\end{equation}%
where we used the fact that ( see Proposition 2.7.13 of \cite{NP-book}) 
\begin{equation*}
\sum\limits_{n=1}^{+\infty }\lambda _{n}^{3}=\frac{1}{8}k_{3}(Y),
\end{equation*}%
where $k_{3}(Y)$ denotes the third cumulant of $Y$. Since $Y$ is centered
then this cumulant is equal to the third moment: $k_{3}(Y)=\mathbf{E}[Y^{3}]$%
.

Applying inequality (\ref{estimate-y}) to $Y=\tilde{Y}_{11}(T)$ where 
\begin{equation*}
\sigma \leqslant \sqrt{\frac{cst(\theta )}{T}}\quad \text{and}\quad |v|=1+O(%
\frac{1}{T}),
\end{equation*}%
we can pick any $\beta \geq 4\sigma $, thus we can chose $\beta =4\sqrt{%
\frac{cst(\theta )}{T}}$. This implies that $\frac{\text{Var}(\tilde{Y}%
_{11}(T))}{\beta ^{2}}=\frac{\sigma ^{2}}{\beta ^{2}}\leqslant \frac{1}{16}$%
. On the other hand, since $\tilde{Y}_{11}(T)$ is in the second Wiener
chaos, then by the hypercontractivity property in Section 2, $k_{3}(\tilde{Y}%
_{11}(T))\leqslant 9\text{Var}(\tilde{Y}_{11}(T))^{3/2}$, and consequently $%
\frac{1}{2}\frac{k_{3}(\tilde{Y}_{11}(T))}{\beta ^{3}}\leqslant \frac{9}{2}%
\times \left( \sum\limits_{n=1}^{+\infty }\lambda _{n}^{2}/\beta ^{2}\right)
^{3/2}$. Finally, since $\beta ^{2}>16\sum\limits_{n=1}^{+\infty }\lambda
_{n}^{2}$, thus we get the following: 
\begin{equation}
\mathbf{P}(\tilde{Y}_{11}(T)>y)\leqslant K\times \exp \left( -\frac{y\sqrt{T}%
}{4\sqrt{cst(\theta )}}\right)   \label{estimateYt}
\end{equation}%
where 
\begin{equation*}
K:=\exp \frac{1}{16}\left( 1+\frac{9}{2}\times \frac{1}{\sqrt{16}}\right)
=e^{17/128}.
\end{equation*}
We now replace $y$ by $\varepsilon +1-y_{11}(T)$. Note that we must also
evaluate $\mathbf{P}(-\tilde{Y}_{11}(T)>\varepsilon +1-y_{11}(T))$ but since
the signs of $\lambda _{n}$ and $1-y_{11}(T)$ are not known, this will yield
exactly to the same estimate as $\mathbf{P}(\tilde{Y}_{11}(T)>\varepsilon
+1-y_{11}(T))$. Thus from the estimate (\ref{estimateYt}), we get 
\begin{equation*}
\mathbf{P}\left( |\bar{Y}_{11}(T)-1|>\varepsilon \right) \leqslant 2K\exp
\left( -\frac{y\sqrt{T}}{4\sqrt{cst(\theta )}}\right) 
\end{equation*}%
Let us denote $c:=4\sqrt{cst(\theta )}$, we must choose $\varepsilon $. Let 
\begin{equation}
\varepsilon =d\times \frac{\ln (T)}{\sqrt{T}},  \label{epsilonchoice}
\end{equation}%
where $d$ is some constant to be chosen as well. Thus since we proved in (%
\ref{y11}) that $|1-y_{11}|\leqslant \frac{5}{2}\frac{1}{\theta T}$, we get 
\begin{equation*}
y:=\varepsilon +1-y_{11}(T)\geq \varepsilon -|1-y_{11}(T)|\geq d\times \frac{%
\ln (T)}{\sqrt{T}}-\frac{5}{2\theta T}.
\end{equation*}%
For $T$ large enough, for instance for $T>T^{\ast }=e\vee \left( \frac{5}{%
d\theta }\right) ^{2}$, we have 
\begin{equation*}
d\times \frac{\ln (T)}{\sqrt{T}}>\frac{5}{\theta T}
\end{equation*}%
so that 
\begin{equation*}
y>d\times \frac{\ln (T)}{2\sqrt{T}}
\end{equation*}%
and hence 
\begin{equation*}
\exp \left( -\frac{y\sqrt{T}}{4\sqrt{cst(\theta )}}\right) \leqslant e^{-d%
\frac{\ln (T)}{2c}}\exp \left( -\frac{d\ln T}{2c}\right) =T^{-d/(2c)}.
\end{equation*}%
Thus it's sufficient to choose $d=c$, obtaining 
\begin{equation}
\mathbf{P}\left( |\bar{Y}_{11}(T)-1|>\varepsilon \right) +3\varepsilon
\leqslant \frac{K}{\sqrt{T}}+3c\frac{\ln (T)}{\sqrt{T}}\sim 3c\frac{\ln (T)}{%
\sqrt{T}}\quad \text{for }T\quad \text{large}  \label{majorY11}
\end{equation}%
Summarizing, with $K=e^{17/128}$ and $c=4\sqrt{cst(\theta )}=4\sqrt{%
4(3+7/(4\theta ))/\theta +64/\theta ^{2}}$, for $T>T^{\ast }\left( \theta
\right) :=e\vee (\frac{5}{4\theta \sqrt{cst\left( \theta \right) }})^{2}$,
then according to inequality (\ref{estim-Z-continu}), with the choice for $%
\varepsilon $ in (\ref{epsilonchoice}) with $d=c$, we have the following
estimate for the denominator term $D(T)$ defined in (\ref{Zdenom}), as
follows 
\begin{equation*}
\mathbf{P}(|D-1|>\varepsilon )+\varepsilon \leqslant \frac{4K}{\sqrt{T}}+%
\frac{12c\ln (T)}{\sqrt{T}},\quad \text{for }T>T^{\ast }\left( \theta
\right) :=\max \left( e,\frac{25}{16\theta ^{2}cst\left( \theta \right) }%
\right) .
\end{equation*}%
Thus, from inequalities (\ref{rhoestim-1}) and (\ref{majorY11}), we get the
following theorem for the convergence in law of Yule's statistic $\rho (T)$
as $T\rightarrow +\infty $.

\begin{theorem}
\label{rho-case-continuous} There exists a constant $c(\theta )$ depending
only on $\theta $ such that for $T$ large enough, we have 
\begin{equation*}
d_{Kol}\left( \sqrt{\theta }\sqrt{T}\rho (T),\mathcal{N}(0,1)\right)
\leqslant c(\theta )\frac{\ln (T)}{\sqrt{T}}.
\end{equation*}%
In particular, 
\begin{equation*}
\sqrt{\theta }\sqrt{T}\rho (T)\overset{\mathrm{\mathcal{L}}}{\longrightarrow 
}\mathcal{N}(0,1),\quad \text{as}\quad T\rightarrow +\infty .
\end{equation*}
\end{theorem}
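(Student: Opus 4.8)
The plan is to assemble Theorem~\ref{rho-case-continuous} directly from the building blocks that have already been developed in this section, so that essentially no new computation is required. The starting point is the algebraic identity (\ref{decom-rho}), which rewrites $\sqrt{\theta}\sqrt{T}\rho(T)$ as the ratio $\frac{2\theta^{3/2}Y_{12}(T)/\sqrt{T}}{D(T)}$ where $D(T)$ is the denominator term defined in (\ref{Zdenom}), together with the decoupling inequality (\ref{rhoestim-1}): for any $0<\varepsilon<1$,
\begin{equation*}
d_{Kol}\left(\sqrt{\theta}\sqrt{T}\rho(T),\mathcal{N}(0,1)\right)\leqslant d_{Kol}\left(2\theta^{3/2}\frac{Y_{12}(T)}{\sqrt{T}},\mathcal{N}(0,1)\right)+4\,\mathbf{P}\left(|\bar{Y}_{11}(T)-1|>\varepsilon\right)+3\varepsilon.
\end{equation*}
The first term on the right is controlled by Proposition~\ref{estim-num}, which already delivers the $\ln(T)/\sqrt{T}$ rate (note that $2\theta^{3/2}Y_{12}(T)/\sqrt{T}$ has variance $2\theta^{3/2}$ times $4\theta^3$ times... — more precisely one uses that $2\theta^{3/2}\cdot\tfrac{1}{2\theta^{3/2}}=1$, so the normalization in Proposition~\ref{estim-num} matches $\mathcal{N}(0,1)$ after the obvious rescaling). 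So the only remaining work is the tail bound on $|\bar{Y}_{11}(T)-1|$, optimized over $\varepsilon$.

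For the tail bound, I would exploit the second-chaos structure exactly as set up in the paragraphs preceding the theorem. The random variable $\bar{Y}_{11}(T)$ is split as $\tilde{Y}_{11}(T)+y_{11}(T)$, where $y_{11}(T)$ is deterministic with $|1-y_{11}(T)|\leqslant \tfrac{5}{2}\tfrac{1}{\theta T}$ by (\ref{y11}), and $\tilde{Y}_{11}(T)$ is a centered second-chaos variable with $\mathrm{Var}(\tilde{Y}_{11}(T))\leqslant cst(\theta)/T$ by (\ref{Y11}). Writing $\tilde{Y}_{11}(T)=\sum_n\lambda_n(T)(Z_n^2-1)$ and applying the exponential Markov estimate (\ref{estimate-y})--(\ref{estimateYt}), together with the hypercontractive bound $k_3(\tilde{Y}_{11}(T))\leqslant 9\,\mathrm{Var}(\tilde{Y}_{11}(T))^{3/2}$ and the choice $\beta=4\sqrt{cst(\theta)/T}\geqslant 4\sigma$, one gets $\mathbf{P}(\tilde{Y}_{11}(T)>y)\leqslant K\exp(-y\sqrt{T}/(4\sqrt{cst(\theta)}))$ with $K=e^{17/128}$; the same estimate holds for $-\tilde{Y}_{11}(T)$ since neither the signs of the $\lambda_n(T)$ nor the sign of $1-y_{11}(T)$ is known, which is why treating both tails symmetrically costs nothing. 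Setting $y=\varepsilon+1-y_{11}(T)$ and then choosing $\varepsilon=c\,\ln(T)/\sqrt{T}$ with $c=4\sqrt{cst(\theta)}$, for $T$ large enough ($T>T^\ast(\theta)$ as specified) one has $y>c\ln(T)/(2\sqrt{T})$, hence $\exp(-y\sqrt{T}/(4\sqrt{cst(\theta)}))\leqslant T^{-1/2}$, so that $\mathbf{P}(|\bar{Y}_{11}(T)-1|>\varepsilon)\leqslant 2K/\sqrt{T}$ and thus $4\,\mathbf{P}(|\bar{Y}_{11}(T)-1|>\varepsilon)+3\varepsilon\leqslant 8K/\sqrt{T}+3c\,\ln(T)/\sqrt{T}$, which is $O(\ln(T)/\sqrt{T})$.

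Combining the two contributions — Proposition~\ref{estim-num} giving $C(\theta)\ln(T)/\sqrt{T}$ for the numerator term and the tail estimate giving $O(\ln(T)/\sqrt{T})$ for the denominator term — and absorbing all $\theta$-dependent constants into a single $c(\theta)$, one obtains $d_{Kol}(\sqrt{\theta}\sqrt{T}\rho(T),\mathcal{N}(0,1))\leqslant c(\theta)\ln(T)/\sqrt{T}$ for $T$ large enough, and the convergence in law follows immediately since the right-hand side tends to $0$. The main obstacle in this argument is not any single step but rather ensuring that the $\varepsilon$-optimization is done correctly: the exponential tail of the second-chaos denominator is what forces the extra logarithmic factor, and one has to verify that the choice $\varepsilon\asymp \ln(T)/\sqrt{T}$ simultaneously makes $\mathbf{P}(|\bar{Y}_{11}(T)-1|>\varepsilon)$ of order $T^{-1/2}$ (or smaller) while keeping $\varepsilon$ itself of the same order as the numerator's rate, so that no term dominates wastefully. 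All the supporting estimates (\ref{y11}), (\ref{Y11}), (\ref{estimate-y}), (\ref{estimateYt}) and Proposition~\ref{estim-num} are already in hand, so the proof is essentially a matter of bookkeeping these constants and invoking (\ref{rhoestim-1}).
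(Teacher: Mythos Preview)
Your proposal is correct and follows essentially the same approach as the paper: you invoke (\ref{rhoestim-1}) together with Proposition~\ref{estim-num} for the numerator and the second-chaos exponential tail estimate (\ref{estimateYt}) with the choice $\varepsilon=c\ln(T)/\sqrt{T}$ for the denominator, exactly as the paper does in the paragraphs leading to (\ref{majorY11}). The only minor wobble is the parenthetical about the normalization constant, but your conclusion there is right: Kolmogorov distance is scale-invariant, so $d_{Kol}(2\theta^{3/2}Y_{12}(T)/\sqrt{T},\mathcal{N}(0,1))=d_{Kol}(Y_{12}(T)/\sqrt{T},\mathcal{N}(0,1/(4\theta^{3})))$, and Proposition~\ref{estim-num} applies directly.
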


\begin{remark}
From expression $cst(\theta )=4(3+7/(4\theta ))/\theta +64/\theta ^{2}$ at
the end of the calculation preceding Theorem \ref{rho-case-continuous}, and
similar estimates elsewhere above, a detailed analysis of how these
constants depend on $\theta $ show that for $\theta >1$,  
\begin{equation*}
0<c(\theta )<\frac{c_{u}}{\sqrt{\theta }},
\end{equation*}%
where $c_{u}$ is a universal constant. This analysis is omitted for
conciseness. In particular, $c(\theta )\rightarrow 0$ as $\theta \rightarrow
+\infty $.
\end{remark}

\section{Discrete observations}

We assume now that the pair $(X_{1},X_{2})$ of Ornstein-Uhlenbeck processes
is observed at $n$ equally spaced discrete time instants $t_{k}:=k\times
\Delta _{n}$, where $\Delta _{n}$ is the observation mesh and $%
T_{n}:=n\Delta _{n}$ is the length of the \textquotedblleft observation
window\textquotedblright . We assume $\Delta _{n}\rightarrow 0$ and $%
T_{n}\rightarrow +\infty $, as $n\rightarrow +\infty $. The aim of this
section is to prove a CLT for the following statistic $\tilde{\rho}(n)$
which can be considered as a discrete version of Yule's nonsense correlation
statistic $\rho (T)$, defined by 
\begin{equation}
\tilde{\rho}(n):=\frac{\tilde{Y}_{12}(n)}{\sqrt{\tilde{Y}_{11}(n)\times 
\tilde{Y}_{22}(n)}}  \label{ro-discrete}
\end{equation}%
where $\tilde{Y}_{ij}(n)$, $i,j=1,2$ are the Riemann-type discretization of $%
Y_{ij}(T)$ defined as follows 
\begin{equation}
\tilde{Y}_{ij}(n):=\Delta
_{n}\sum\limits_{k=0}^{n-1}X_{i}(t_{k})X_{j}(t_{k})-T_{n}\tilde{X}_{i}(n)%
\tilde{X}_{j}(n),\quad i,j=1,2,  \label{Yij-discrete}
\end{equation}%
with $\tilde{X}_{i}(n)$ denoting the empirical mean-process of $X_{i}$, $%
i=1,2$, namely 
\begin{equation*}
\tilde{X}_{i}(n):=\frac{1}{n}\sum\limits_{k=0}^{n-1}X_{i}(t_{k}),\quad i=1,2
\end{equation*}%
As in the continuous case, we make use of the following expression of $%
\tilde{\rho}(n)$ along with Proposition \ref{estim-kol} in order to prove
its convergence in law to a Gaussian distribution: 
\begin{equation}
\sqrt{\theta }\sqrt{T_{n}}\tilde{\rho}(n)=\frac{2\theta ^{3/2}\frac{\tilde{Y}%
_{12}(n)}{\sqrt{T_{n}}}}{2\theta \sqrt{\frac{\tilde{Y}_{11}(n)}{T_{n}}\times 
\frac{\tilde{Y}_{22}(n)}{T_{n}}}}.  \label{decomp-rho-n}
\end{equation}

\subsection{Convergence in law of $\frac{\tilde{Y}_{12}(n)}{\protect\sqrt{T_n%
}}$}

From the expression of $\tilde{Y}_{12}(n)$ given in (\ref{Yij-discrete}), we
can write 
\begin{equation}
\frac{\tilde{Y}_{12}(n)}{\sqrt{T}_{n}}=A(n)-B(n),  \label{decomY12-discret}
\end{equation}%
where 
\begin{equation*}
A(n):=\frac{\sqrt{T}_{n}}{n}\sum\limits_{k=0}^{n-1}X_{1}(t_{k})X_{2}(t_{k})%
\text{ \ }\text{and}\text{ \ }B(n):=\sqrt{T_{n}}\tilde{X}_{1}(n)\tilde{X}%
_{2}(n).
\end{equation*}%
We also defined the following random sequence 
\begin{equation*}
\delta (n):=A(n)-F_{T_{n}},
\end{equation*}%
where $F_{T_{n}}$ is defined in (\ref{FT}). The following lemma holds.

\begin{lemma}
\label{norm-delta}  Assume that $\Delta_n \rightarrow 0$ as $n \rightarrow
+\infty$ and that $T_n = n \Delta_n \rightarrow +\infty$, then, there exists
a constant $C_{\theta} := 4 \times \max\left( \frac{8}{9 \theta}, \frac{%
\sqrt{2}}{3} \frac{1}{\theta^{1/2}}, \frac{1}{4}\right)$, such that  
\begin{equation*}
\mathbf{E}[\delta^2(n)] \leqslant C_\theta \times n \Delta^2_n, 
\end{equation*}
In particular if $n \Delta^2_n \rightarrow 0$ as $n \rightarrow +\infty$, $%
\mathbf{E}[\delta^2(n)] \rightarrow 0 $, as $n \rightarrow +\infty$.
\end{lemma}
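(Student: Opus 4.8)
The plan is to recognize $\delta (n)$ as a single integral of local time-increments and then bound its $L^{2}(\Omega )$ norm. Since $\sqrt{T_{n}}/n=\Delta _{n}/\sqrt{T_{n}}$ and $\sum_{k=0}^{n-1}\int_{t_{k}}^{t_{k+1}}dt=T_{n}$, one rewrites
\begin{equation*}
\delta (n)=A(n)-F_{T_{n}}=\frac{1}{\sqrt{T_{n}}}\sum_{k=0}^{n-1}\int_{t_{k}}^{t_{k+1}}\big( X_{1}(t_{k})X_{2}(t_{k})-X_{1}(t)X_{2}(t)\big) \,dt=\frac{1}{\sqrt{T_{n}}}\int_{0}^{T_{n}}g(t)\,dt,
\end{equation*}
where $g(t):=X_{1}(\eta _{n}(t))X_{2}(\eta _{n}(t))-X_{1}(t)X_{2}(t)$ and $\eta _{n}(t):=\Delta _{n}\lfloor t/\Delta _{n}\rfloor$ is the left endpoint of the mesh interval containing $t$, so that $0\leqslant t-\eta _{n}(t)\leqslant \Delta _{n}$. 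Since $X_{1}(s)X_{2}(s)=I_{2}^{W}(\bar{f}_{s}\otimes \bar{\bar{f}}_{s})$ by Lemma \ref{I2}, the variable $\delta (n)$ lives in the second Wiener chaos of $W$, hence $\mathbf{E}[\delta (n)]=0$ and $\mathbf{E}[\delta ^{2}(n)]=\text{Var}(\delta (n))$ (and hypercontractivity would upgrade an $L^{2}$ bound to all $L^{p}$, should that be needed); but the probabilistic route below is the shortest.

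Next I would estimate $\Vert g(t)\Vert _{L^{2}(\Omega )}$ uniformly in $t\in \lbrack 0,T_{n}]$. Adding and subtracting a cross term,
\begin{equation*}
g(t)=\big( X_{1}(\eta _{n}(t))-X_{1}(t)\big) X_{2}(\eta _{n}(t))+X_{1}(t)\big( X_{2}(\eta _{n}(t))-X_{2}(t)\big) ,
\end{equation*}
and using that $X_{1}$ and $X_{2}$ are independent to factor each expectation, the problem reduces to two elementary facts about the OU process: $\mathbf{E}[X_{i}(t)^{2}]=\tfrac{1-e^{-2\theta t}}{2\theta }\leqslant \tfrac{1}{2\theta }$, and, for $0\leqslant h\leqslant \Delta _{n}$,
\begin{equation*}
\mathbf{E}\big[ (X_{i}(t-h)-X_{i}(t))^{2}\big] =(1-e^{-\theta h})^{2}\mathbf{E}[X_{i}(t-h)^{2}]+\frac{1-e^{-2\theta h}}{2\theta }\leqslant \frac{\theta \Delta _{n}^{2}}{2}+\Delta _{n},
\end{equation*}
the identity coming from the Markov decomposition $X_{i}(t)=e^{-\theta h}X_{i}(t-h)+\int_{t-h}^{t}e^{-\theta (t-u)}dW^{i}(u)$ (the two summands being independent) and the bound from $1-e^{-u}\leqslant u$. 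Together these give $\Vert g(t)\Vert _{L^{2}(\Omega )}^{2}\leqslant c_{1}(\theta )\Delta _{n}$ uniformly in $t$, with $c_{1}(\theta )$ of order $\theta ^{-1}$.

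Finally I would conclude by Minkowski's integral inequality: $\Vert \delta (n)\Vert _{L^{2}(\Omega )}\leqslant \tfrac{1}{\sqrt{T_{n}}}\int_{0}^{T_{n}}\Vert g(t)\Vert _{L^{2}(\Omega )}\,dt\leqslant \sqrt{c_{1}(\theta )}\,\sqrt{T_{n}\Delta _{n}}$, whence $\mathbf{E}[\delta ^{2}(n)]\leqslant c_{1}(\theta )\,T_{n}\Delta _{n}=c_{1}(\theta )\,n\Delta _{n}^{2}$, which is the asserted order because $T_{n}\Delta _{n}=n\Delta _{n}^{2}$; the ``in particular'' claim is then immediate. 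To obtain precisely the constant $C_{\theta }$ displayed in the statement — whose $\max$ of three terms a plain Minkowski bound cannot produce — I would instead evaluate $\mathbf{E}[\delta ^{2}(n)]=\tfrac{1}{T_{n}}\int_{[0,T_{n}]^{2}}\mathbf{E}[g(t)g(s)]\,ds\,dt$ directly, expanding $\mathbf{E}[g(t)g(s)]$ via the explicit covariance $\mathbf{E}[X_{i}(a)X_{i}(b)]=\tfrac{e^{-\theta (a+b)}}{2\theta }(e^{2\theta (a\wedge b)}-1)$ and the decomposition $g=D_{1}+D_{2}$ above, keeping the two summands of the increment variance separate; bounding the several resulting contributions with $1-e^{-u}\leqslant u$ and $\mathbf{E}[X_{i}^{2}]\leqslant \tfrac{1}{2\theta }$ yields the three terms of $C_{\theta }$. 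I expect the only real difficulty to be this last piece of bookkeeping — organizing the double-integral expansion so that the interplay between the $1/\theta$-correlation length of the OU process and the $\Delta _{n}$-mesh leaves an estimate genuinely of order $n\Delta _{n}^{2}$ with an explicit, uniform-in-$n$ constant — rather than anything conceptual.
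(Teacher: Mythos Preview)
Your argument is correct and is essentially the paper's: both rewrite $\delta(n)$ as $T_n^{-1/2}\int_0^{T_n}g(t)\,dt$ and then apply Minkowski (the paper phrases it as Cauchy--Schwarz on the double sum/integral, which amounts to the same thing), reducing to a uniform bound on $\|g(t)\|_{L^2(\Omega)}$ via $\sup_t\mathbf{E}[X_i^2(t)]\leqslant (2\theta)^{-1}$ and an OU increment estimate. The only real difference is how the increment $X_i(t_k)-X_i(u)$ is handled: the paper writes $X_i(t)=Z_i(t)-Z_i(0)e^{-\theta t}$ with $Z_i$ the stationary OU (so that $\mathbf{E}[(Z_i(t)-Z_i(s))^2]\leqslant|t-s|$), and the four cross-terms arising from expanding the product of two factors of the form $\|Z\text{-increment}\|+\|Z_i(0)\|\cdot|e^{-\theta\cdot}\text{-increment}|$ are precisely the terms $A_1,\dots,A_4$ whose individual bounds $\tfrac{8}{9\theta}n\Delta_n^2$, $\tfrac{\sqrt{2}}{3\theta^{1/2}}n\Delta_n^{5/2}$ (twice), and $\tfrac{1}{4}n\Delta_n^3$ produce the three-term $\max$ in $C_\theta$. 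So, contrary to your last paragraph, the stated constant \emph{does} come out of a plain Minkowski bound --- the three terms are an artifact of this stationary/transient splitting of the increment, not of a direct evaluation of the covariance $\mathbf{E}[g(t)g(s)]$. Your Markov decomposition is a perfectly good (and arguably cleaner) alternative; it simply packages the constant differently.
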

\begin{proof}
We have 
\begin{align*}
\delta (n)& =A(n)-F_{T_{n}} \\
& =\sqrt{T_{n}}\left( \frac{1}{n}\sum%
\limits_{k=0}^{n-1}X_{1}(t_{k})X_{2}(t_{k})-\frac{1}{T_{n}}%
\int_{0}^{T_{n}}X_{1}(u)X_{2}(u)du\right)  \\
& =\frac{1}{\sqrt{T_{n}}}\sum\limits_{k=0}^{n-1}%
\int_{t_{k}}^{t_{k+1}}(X_{1}(t_{k})X_{2}(t_{k})-X_{1}(u)X_{2}(u))du
\end{align*}%
By Cauchy Schwartz inequality and the fact that $\sup\limits_{t\geq 0}%
\mathbf{E}[X_{i}^{2}(t)]=\frac{1}{2\theta }$, $i=1,2$, we get 
\begin{align*}
& \mathbf{E}[\delta ^{2}(n)] \\
& =\frac{1}{{T_{n}}}\sum\limits_{k_{1},k_{2}=0}^{n-1}%
\int_{t_{k_{1}}}^{t_{k_{1}+1}}\int_{t_{k_{2}}}^{t_{k_{2}+1}}\mathbf{E}\left[
(X_{1}(t_{k_{1}})X_{2}(t_{k_{1}})-X_{1}(u)X_{2}(u))(X_{1}(t_{k_{2}})X_{2}(t_{k_{2}})-X_{1}(v)X_{2}(v))%
\right] dudv \\
& \leqslant \frac{1}{{T_{n}}}\sum\limits_{k_{1},k_{2}=0}^{n-1}%
\int_{t_{k_{1}}}^{t_{k_{1}+1}}\int_{t_{k_{2}}}^{t_{k_{2}+1}}\Vert
X_{1}(t_{k_{1}})X_{2}(t_{k_{1}})-X_{1}(u)X_{2}(u)\Vert _{L^{2}}\Vert
X_{1}(t_{k_{2}})X_{2}(t_{k_{2}})-X_{1}(v)X_{2}(v)\Vert _{L^{2}}dudv \\
& \leqslant \frac{2}{\theta }\frac{1}{{T_{n}}}\sum%
\limits_{k_{1},k_{2}=0}^{n-1}\int_{t_{k_{1}}}^{t_{k_{1}+1}}%
\int_{t_{k_{2}}}^{t_{k_{2}+1}}\Vert X_{i}(t_{k_{1}})-X_{i}(u)\Vert
_{L^{2}}\Vert X_{i}(t_{k_{2}})-X_{i}(v)\Vert _{L^{2}}dudv,\text{ \ }i=1,2,
\end{align*}%
where we used the fact that $X_{1}$ and $X_{2}$ are two Gaussian processes
equal in law. On the other hand since $%
(X_{i}(t_{k})-X_{i}(u))=(Z_{i}(t_{k})-Z_{i}(u))-Z_{0}(e^{-\theta
t_{k}}-e^{-\theta u})$, $i=1,2$. Recall that $%
Z_{i}(r):=\int_{-\infty }^{r}e^{-\theta (r-t)}dW^{i}(t)$, $i=1,2$. We get 
\begin{align*}
\frac{2}{\theta }\frac{1}{{T_{n}}}&
\sum\limits_{k_{1},k_{2}=0}^{n-1}\int_{t_{k_{1}}}^{t_{k_{1}+1}}%
\int_{t_{k_{2}}}^{t_{k_{2}+1}}\Vert X_{1}(t_{k_{1}})-X_{1}(u)\Vert
_{L^{2}}\Vert X_{1}(t_{k_{2}})-X_{1}(v)\Vert _{L^{2}}dudv \\
& \leqslant A_{1}(n)+A_{2}(n)+A_{3}(n)+A_{4}(n)
\end{align*}%
where 
\begin{align*}
& A_{1}(n):=\frac{2}{\theta }\frac{1}{{T_{n}}}\sum%
\limits_{k_{1},k_{2}=0}^{n-1}\int_{t_{k_{1}}}^{t_{k_{1}+1}}%
\int_{t_{k_{2}}}^{t_{k_{2}+1}}\Vert Z_{1}(t_{k_{1}})-Z_{1}(u)\Vert
_{L^{2}}\Vert Z_{1}(t_{k_{2}})-Z_{1}(v)\Vert _{L^{2}}dudv, \\
& A_{2}(n):=\frac{1}{{T_{n}}}\frac{\sqrt{2}}{\theta ^{3/2}}%
\sum\limits_{k_{1},k_{2}=0}^{n-1}\int_{t_{k_{1}}}^{t_{k_{1}+1}}%
\int_{t_{k_{2}}}^{t_{k_{2}+1}}|e^{-\theta t_{k_{2}}}-e^{-\theta v}|\times
\Vert Z_{1}(t_{k_{1}})-Z_{1}(u)\Vert _{L^{2}}dudv, \\
& A_{3}(n):=\frac{1}{{T_{n}}}\frac{\sqrt{2}}{\theta ^{3/2}}%
\sum\limits_{k_{1},k_{2}=0}^{n-1}\int_{t_{k_{1}}}^{t_{k_{1}+1}}%
\int_{t_{k_{2}}}^{t_{k_{2}+1}}|e^{-\theta t_{k_{1}}}-e^{-\theta u}|\times
\Vert Z_{1}(t_{k_{2}})-Z_{1}(v)\Vert _{L^{2}}dudv, \\
& A_{4}(n):=\frac{1}{{T_{n}}}\frac{1}{\theta ^{2}}\sum%
\limits_{k_{1},k_{2}=0}^{n-1}\int_{t_{k_{1}}}^{t_{k_{1}+1}}%
\int_{t_{k_{2}}}^{t_{k_{2}+1}}|e^{-\theta t_{k_{1}}}-e^{-\theta u}|\times
|e^{-\theta t_{k_{2}}}-e^{-\theta v}|dudv.
\end{align*}%
We will use in the sequel the fact that the increments of the process $Z_{1}$
satisfies $\mathbf{E}[(Z_{1}(t)-Z_{1}(s))^{2}]\leqslant |t-s|$, $t,s\geq 0$.
For the first sequence $A_{1}(n)$ we have 
\begin{align*}
A_{1}(n)& \leqslant \frac{2}{\theta }\frac{1}{{T_{n}}}\sum%
\limits_{k_{1},k_{2}=0}^{n-1}\int_{t_{k_{1}}}^{t_{k_{1}+1}}%
\int_{t_{k_{2}}}^{t_{k_{2}+1}}|t_{k_{1}}-u|^{1/2}|t_{k_{2}}-v|^{1/2}dudv, \\
& =\frac{2}{\theta }\frac{\Delta _{n}^{3}}{T_{n}}\sum%
\limits_{k_{1},k_{2}=0}^{n-1}\int_{0}^{1}\int_{0}^{1}t^{1/2}s^{1/2}dtds, \\
& =\frac{8}{9\theta }\times n\Delta _{n}^{2}.
\end{align*}%
where we used the change of variables $t=\frac{u-t_{k_{1}}}{\Delta _{n}}$, $%
s=\frac{v-t_{k_{2}}}{\Delta _{n}}$. For $A_{2}(n)$, we have 
\begin{align*}
A_{2}(n)& \leqslant \frac{\sqrt{2}}{\theta ^{1/2}}\frac{1}{T_{n}}%
\sum\limits_{k_{1},k_{2}=0}^{n-1}\int_{t_{k_{1}}}^{t_{k_{1}+1}}%
\int_{t_{k_{2}}}^{t_{k_{2}+1}}|t_{k_{1}}-u|^{1/2}|t_{k_{2}}-v|dudv, \\
& =\frac{\sqrt{2}}{\theta ^{1/2}}\frac{1}{T_{n}}\Delta
_{n}^{7/2}\sum\limits_{k_{1},k_{2}=0}^{n-1}\int_{0}^{1}%
\int_{0}^{1}t^{1/2}sdtds \\
& \leqslant \frac{\sqrt{2}}{3}\frac{1}{\theta ^{1/2}}\times n\Delta
_{n}^{5/2}.
\end{align*}%
Similarly, we have 
\begin{equation*}
A_{3}(n)\leqslant \frac{\sqrt{2}}{3}\frac{1}{\theta ^{1/2}}\times n\Delta
_{n}^{5/2}.
\end{equation*}%
For the last sequence $A_{4}(n)$, we get 
\begin{align*}
A_{4}(n)& \leqslant \frac{1}{T_{n}}\sum\limits_{k_{1},k_{2}=0}^{n-1}%
\int_{t_{k_{1}}}^{t_{k_{1}+1}}%
\int_{t_{k_{2}}}^{t_{k_{2}+1}}|t_{k_{1}}-u||t_{k_{2}}-v|dudv \\
& \leqslant \frac{\Delta _{n}^{4}}{T_{n}}\sum\limits_{k_{1},k_{2}=0}^{n-1}%
\int_{0}^{1}\int_{0}^{1}tsdtds \\
& \leqslant \frac{n\Delta _{n}^{3}}{4}.
\end{align*}%
The desired result following using the previous inequalities and the fact
that $\Delta _{n}\rightarrow 0$ as $n\rightarrow +\infty $.
\end{proof}

\begin{remark}
One possible mesh that satisfies the assumptions of Lemma \ref{norm-delta}
is $\Delta_n = n^{- \lambda}$ with $1/2 < \lambda <1$.
\end{remark}

\begin{proposition}
\label{An-kol} There exists a constant $C(\theta)$ such that  
\begin{equation*}
d_{Kol}\left(A(n),\mathcal{N}\left(0, \frac{1}{4\theta^3} \right)\right)
\leqslant C(\theta) \times \max\left((n\Delta_n)^{-1/2}, (n\Delta^2_n)^{%
\frac{1}{3}} \right)
\end{equation*}
In particular, if $n\Delta^2_n \rightarrow 0$ as $n \rightarrow +\infty$, we
get  
\begin{equation*}
A(n) \overset{\mathcal{L}}{\longrightarrow} \mathcal{N}\left(0, \frac{1}{%
4\theta^3}\right),
\end{equation*}
as $n \rightarrow +\infty$.
\end{proposition}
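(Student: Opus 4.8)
The plan is to write $A(n)=F_{T_n}+\delta(n)$, where $F_{T_n}$ is the continuous numerator of Proposition~\ref{cvg-loi-FT} and $\delta(n)=A(n)-F_{T_n}$ is the discretization remainder, then handle $F_{T_n}$ with Proposition~\ref{cvg-loi-FT}, control $\delta(n)$ only through its second moment as provided by Lemma~\ref{norm-delta}, and glue the two pieces together with the sum version of the Michel--Pfanzagl inequality, i.e. assertion~1 of Proposition~\ref{estim-kol}. The free parameter $\varepsilon$ in that inequality is then optimized against the discretization rate, and this is precisely what produces the exponent $1/3$ in the term $(n\Delta_n^2)^{1/3}$.

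Concretely, set $F^\theta_{T_n}:=2\theta^{3/2}F_{T_n}$, so that $2\theta^{3/2}A(n)=F^\theta_{T_n}+2\theta^{3/2}\delta(n)$, and note that the Kolmogorov distance is unchanged under a common multiplication by the positive constant $2\theta^{3/2}$, hence
\begin{equation*}
d_{Kol}\!\left(A(n),\mathcal{N}\!\left(0,\frac{1}{4\theta^3}\right)\right)=d_{Kol}\!\left(2\theta^{3/2}A(n),\mathcal{N}(0,1)\right).
\end{equation*}
Applying Proposition~\ref{estim-kol}, assertion~1, with $X=F^\theta_{T_n}$ and $Y=2\theta^{3/2}\delta(n)$ gives, for every $\varepsilon>0$,
\begin{equation*}
d_{Kol}\!\left(2\theta^{3/2}A(n),\mathcal{N}(0,1)\right)\leqslant d_{Kol}\!\left(F^\theta_{T_n},\mathcal{N}(0,1)\right)+\mathbf{P}\!\left(2\theta^{3/2}|\delta(n)|>\varepsilon\right)+\varepsilon.
\end{equation*}
By Proposition~\ref{cvg-loi-FT} the first term is at most $c(\theta)T_n^{-1/2}=c(\theta)(n\Delta_n)^{-1/2}$, while Markov's inequality together with Lemma~\ref{norm-delta} bounds the probability by $4\theta^3 C_\theta\, n\Delta_n^2/\varepsilon^2$.

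It then remains to minimize $\varepsilon\mapsto 4\theta^3 C_\theta\, n\Delta_n^2/\varepsilon^2+\varepsilon$ over $\varepsilon>0$; the minimizer is $\varepsilon\asymp(n\Delta_n^2)^{1/3}$ (with a $\theta$-dependent constant), and the resulting value is of order $(n\Delta_n^2)^{1/3}$. Adding this to the $(n\Delta_n)^{-1/2}$ contribution coming from $F^\theta_{T_n}$, and bounding a sum of two nonnegative quantities by twice their maximum, yields the claimed inequality with $C(\theta)$ an explicit expression in $c(\theta)$ and $C_\theta$. Finally, under the standing assumptions $\Delta_n\to 0$, $T_n=n\Delta_n\to+\infty$, and $n\Delta_n^2\to 0$, both $(n\Delta_n)^{-1/2}$ and $(n\Delta_n^2)^{1/3}$ tend to $0$, so $A(n)\overset{\mathcal{L}}{\longrightarrow}\mathcal{N}(0,1/(4\theta^3))$.

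I do not expect a genuine obstacle here: the two quantitative inputs --- the rate $T_n^{-1/2}$ for the continuous numerator $F_{T_n}$ (Proposition~\ref{cvg-loi-FT}) and the $L^2$-rate $n\Delta_n^2$ for the discretization error $\delta(n)$ (Lemma~\ref{norm-delta}) --- are already in hand, so the argument is essentially bookkeeping, the only delicate points being the correct optimization of the Michel--Pfanzagl parameter $\varepsilon$ and keeping the $\theta$-dependent constant finite. One could in principle sharpen the exponent $1/3$ by replacing the crude Markov bound on $\delta(n)$ with an exponential tail estimate of the type used for the denominator terms (recall that $\delta(n)$, being a difference of two double Wiener integrals, lies in the second chaos), but any such improvement is immaterial under the present hypotheses, so we retain the simpler packaging that gives $(n\Delta_n^2)^{1/3}$.
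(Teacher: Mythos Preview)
Your proof is correct and follows essentially the same route as the paper: decompose $A(n)=F_{T_n}+\delta(n)$, apply the sum version of the Michel--Pfanzagl inequality (Proposition~\ref{estim-kol}, assertion~1), bound the $F_{T_n}$ term by Proposition~\ref{cvg-loi-FT} and the $\delta(n)$ term by Markov plus Lemma~\ref{norm-delta}, and then optimize over $\varepsilon$ to produce the $(n\Delta_n^2)^{1/3}$ factor. The only cosmetic difference is that you explicitly rescale by $2\theta^{3/2}$ to compare with $\mathcal{N}(0,1)$, whereas the paper leaves this rescaling implicit.
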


\begin{proof}
Since $A(n) = \delta(n) + F_{T_n}$, we have by Proposition \ref{estim-kol}
assertion 1. the following estimate 
\begin{align*}
d_{Kol}\left( A(n), \mathcal{N}(0,\frac{1}{4 \theta^3})\right) & \leqslant
d_{Kol}\left( F_{T_n}, \mathcal{N}(0,\frac{1}{4 \theta^3})\right) + \mathbf{P%
}\left(|2 \theta^{3/2} \delta(n)| > \varepsilon \right) + \varepsilon \\
& \leqslant d_{Kol}\left( F_{T_n}, \mathcal{N}(0,\frac{1}{4 \theta^3}%
)\right) + 4 \theta^3 \times \frac{\mathbf{E}[\delta^2(n)]}{\varepsilon^2} +
\varepsilon \\
& \leqslant \frac{c(\theta)}{\sqrt{T_n}} + \inf\limits_{\varepsilon >
0}g_n(\varepsilon)
\end{align*}
where $g_n(\varepsilon) = 4 \theta^3 \times C_{\theta} \times \frac{n
\Delta^2_n}{\varepsilon^2} + \varepsilon$, since $g_n$ is convex on $\mathbb{%
R}_{+}$, $\arg \inf\limits_{\varepsilon > 0}g_n(\varepsilon) =
\varepsilon^*(n) = \left( 4 \theta^3 \times C_{\theta}
n\Delta^2_n\right)^{1/3}$, where $C_{\theta}$ is the constant from Lemma \ref%
{norm-delta}. Thus, we get the following estimate for the convergence in law
of the random sequence $A(n)$ : 
\begin{equation*}
d_{Kol}\left( A(n), \mathcal{N}(0,\frac{1}{4 \theta^3})\right) \leqslant
C(\theta) \times \max\left((n\Delta_n)^{-1/2}, (n\Delta^2_n)^{\frac{1}{3}}
\right),
\end{equation*}
which ends the proof.
\end{proof}

On the other hand, from the decomposition (\ref{decomY12-discret}),
Corollary \ref{kol-sum}, Proposition \ref{estim-kol}, there exists a
constant $\beta $ such that $4\theta ^{3/2}\frac{\sqrt{3}}{3}\sqrt{T_{n}}%
\mathbf{E}[\tilde{X}_{1}^{2}(n)]<\beta <\left( C\times \mathbf{E}%
[|B(n)|]\wedge 4\right) $, and such that 
\begin{equation}
d_{Kol}\left( \frac{\tilde{Y}_{12}(n)}{\sqrt{T_{n}}},\mathcal{N}(0,\frac{1}{%
4\theta ^{3}})\right) \leqslant d_{Kol}\left( A(n),\mathcal{N}(0,\frac{1}{%
4\theta ^{3}})\right) +\beta \left( 1+\ln (\frac{4}{\beta })\right) ,
\label{kol-tildeY12}
\end{equation}%
Let us now compute $\Vert \tilde{X}_{1}(n)\Vert _{L^{2}}^{2}$, 
\begin{align*}
\tilde{X}_{1}(n)& =\frac{1}{n}\sum\limits_{k=0}^{n-1}I_{1}^{W_{1}}(f_{t_{k}})
\\
& =I_{1}^{W_{1}}(g_{n}),
\end{align*}%
with $g_{n}:=\frac{1}{n}\sum\limits_{k=0}^{n-1}f_{t_{k}}$. Thus similarly to
a previous calculation, we have 
\begin{align}
\mathbf{E}[\tilde{X}_{1}^{2}(n)]& =\Vert g_{n}\Vert _{L^{2}([0,T_{n}])}^{2} 
\notag \\
& =\frac{1}{n^{2}}\sum\limits_{k_{1},k_{2}=0}^{n-1}\langle
f_{t_{k_{1}}},f_{t_{k_{2}}}\rangle _{L^{2}([0,T_{n}])}  \notag \\
& =\frac{1}{n^{2}}\sum\limits_{k_{1},k_{2}=0}^{n-1}\mathbf{E}%
[X_{1}(t_{k_{1}})X_{1}(t_{k_{2}})]  \notag \\
& =\frac{2}{n^{2}}\sum\limits_{k_{1}=0}^{n-1}\mathbf{E}%
[X_{1}(t_{k_{1}})^{2}]+\frac{2}{n^{2}}\sum_{\substack{ k_{1},k_{2}=0 \\ %
k_{1}\neq k_{2}}}^{n-1}\mathbf{E}[X_{1}(t_{k_{1}})X_{1}(t_{k_{2}})]  \notag
\\
& =\frac{2}{n^{2}}\sum\limits_{k_{1}=0}^{n-1}\left( \frac{1-e^{-2\theta
t_{k_{1}}}}{2\theta }\right) +\frac{4}{n^{2}}\sum\limits_{k_{1}=0}^{n-2}\sum%
\limits_{k_{2}=k_{1}+1}^{n-1}\mathbf{E}[X_{1}(t_{k_{1}})X_{1}(t_{k_{2}})] 
\notag \\
& \leqslant \frac{1}{\theta }\frac{1}{n}+\frac{2}{\theta ^{2}}\frac{1}{%
n\Delta _{n}}\leqslant \frac{1}{\theta }\left( 1+\frac{2}{\theta }\right)
(n\Delta _{n})^{-1},  \label{Xtilde-nom}
\end{align}%
where for the last equality we used the fact that 
\begin{equation*}
\frac{4}{n^{2}}\sum\limits_{k_{1}=0}^{n-2}\sum\limits_{k_{2}=k_{1}+1}^{n-1}%
\mathbf{E}[X_{1}(t_{k_{1}})X_{1}(t_{k_{2}})]\leqslant \frac{4}{n^{2}}%
\sum\limits_{k_{1}=0}^{n-2}\sum\limits_{k_{2}=k_{1}+1}^{n-1}\rho
(t_{k_{2}}-t_{k_{1}})
\end{equation*}%
Recall that $\rho (r-s):=\mathbf{E}[Z_{r}^{i}Z_{s}^{i}]$, where $%
Z_{r}^{i}:=\int_{-\infty }^{r}e^{-\theta (r-t)}dW^{i}(t)$, $i=1,2$.
Moreover, 
\begin{align*}
\frac{4}{n^{2}}\sum\limits_{k_{1}=0}^{n-2}\sum\limits_{k_{2}=k_{1}+1}^{n-1}%
\rho (t_{k_{2}}-t_{k_{1}})& =\frac{4}{n^{2}}\sum\limits_{k_{1}=0}^{n-2}\sum%
\limits_{k_{2}=k_{1}+1}^{n-1}\rho (\Delta _{n}(k_{2}-k_{1})) \\
& =\frac{4}{n^{2}}\sum\limits_{r=1}^{n-1}(n-r)\rho (\Delta _{n}r) \\
& =\frac{2}{\theta }\frac{1}{n^{2}}\sum\limits_{r=1}^{n-1}(n-r)e^{-\theta
\Delta _{n}r} \\
& \leqslant \frac{2}{\theta }\frac{1}{n}\frac{1}{(1-e^{-\Delta _{n}\theta })}
\\
& \leqslant \frac{2}{\theta ^{2}}(n\Delta _{n})^{-1}.
\end{align*}%
We deduce that there exists a constant $c(\theta )$, such that 
\begin{align*}
\mathbf{E}[|B(n)|]& =\sqrt{T_{n}}\mathbf{E}[|\tilde{X}_{1}(n)|]\mathbf{E}[|%
\tilde{X}_{2}(n)|] \\
& \leqslant c(\theta )(n\Delta _{n})^{-1/2}.
\end{align*}%
Therefore, from equation (\ref{kol-tildeY12}) and the fact that the function 
$x\mapsto x\left( 1+\ln (\frac{4}{x})\right) $ on $(0,4)$, we have for $n$
large enough 
\begin{equation*}
d_{Kol}\left( \frac{\tilde{Y}_{12}(n)}{\sqrt{T_{n}}},\mathcal{N}(0,\frac{1}{%
4\theta ^{3}})\right) \leqslant d_{Kol}\left( A(n),\mathcal{N}(0,\frac{1}{%
4\theta ^{3}})\right) +\frac{c(\theta )}{2}\frac{\ln (n\Delta _{n})}{\sqrt{%
n\Delta _{n}}}.
\end{equation*}%
The following proposition follows.

\begin{theorem}
There exists a constant $c(\theta)$ depending only on $\theta$ such that 
\begin{equation*}
d_{Kol} \left( \frac{\tilde{Y}_{12}(n)}{\sqrt{T_n}}, \mathcal{N}(0, \frac{1}{%
4 \theta^3})\right) \leqslant c(\theta) \times \ln(n\Delta_n)
\max\left((n\Delta_n)^{-1/2}, (n\Delta^2_n)^{\frac{1}{3}} \right)
\end{equation*}
In particular, if $n \Delta^2_n \rightarrow 0 $ as $n \rightarrow +\infty$, 
\begin{equation*}
\frac{\tilde{Y}_{12}(n)}{\sqrt{T_n}} \overset{\mathcal{L}}{\longrightarrow} 
\mathcal{N}\left(0, \frac{1}{4\theta^3}\right),
\end{equation*}
as $n \rightarrow +\infty$.
\end{theorem}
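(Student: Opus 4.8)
The plan is to assemble the theorem from the two ingredients already established: the rate for $A(n)$ in Proposition \ref{An-kol}, and the Michel--Pfanzagl decoupling of the product-normal remainder $B(n)=\sqrt{T_n}\,\tilde X_1(n)\tilde X_2(n)$ via Corollary \ref{kol-sum}. Starting from the decomposition $\tilde Y_{12}(n)/\sqrt{T_n}=A(n)-B(n)$ in (\ref{decomY12-discret}), I would apply Corollary \ref{kol-sum} with $X=A(n)$ and $Y=-B(n)$; since $\tilde X_1(n)=I_1^{W_1}(g_n)$ and $\tilde X_2(n)=I_1^{W_2}(g_n)$ are independent centered Gaussians, $-B(n)$ is exactly a product of two independent normals, so the corollary applies and yields (\ref{kol-tildeY12}) with a constant $\beta$ squeezed between $4\theta^{3/2}\tfrac{\sqrt3}{3}\sqrt{T_n}\,\mathbf E[\tilde X_1^2(n)]$ and $(C\,\mathbf E[|B(n)|])\wedge 4$.

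The only genuine computation is the bound on $\mathbf E[|B(n)|]$, which reduces to estimating $\mathbf E[\tilde X_1^2(n)]=\|g_n\|_{L^2([0,T_n])}^2$ with $g_n=\tfrac1n\sum_{k=0}^{n-1}f_{t_k}$. Here I would expand the double sum of discrete autocovariances of the OU process, split off the diagonal (controlled by $\sup_t\mathbf E[X_1^2(t)]=\tfrac1{2\theta}$, an $O(1/n)$ contribution), and dominate the off-diagonal terms by the stationary covariance $\rho(\Delta_n r)=\tfrac1{2\theta}e^{-\theta\Delta_n r}$, whose geometric-type sum $\sum_{r\geq1}(n-r)e^{-\theta\Delta_n r}$ is at most a multiple of $n/(\theta\Delta_n)$. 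This gives $\mathbf E[\tilde X_1^2(n)]\le \tfrac1\theta(1+\tfrac2\theta)(n\Delta_n)^{-1}$ as in (\ref{Xtilde-nom}), and hence by independence $\mathbf E[|B(n)|]=\sqrt{T_n}\,\mathbf E[|\tilde X_1(n)|]\,\mathbf E[|\tilde X_2(n)|]\le c(\theta)(n\Delta_n)^{-1/2}$, using $T_n=n\Delta_n$ and $\mathbf E|\tilde X_i(n)|\le\sqrt{\mathbf E[\tilde X_i^2(n)]}$.

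With $\beta\le c(\theta)(n\Delta_n)^{-1/2}\wedge 4$ and the monotonicity of $x\mapsto x(1+\ln(4/x))$ on $(0,4)$, the remainder term $\beta(1+\ln(4/\beta))$ in (\ref{kol-tildeY12}) is at most $\tfrac{c(\theta)}{2}\ln(n\Delta_n)(n\Delta_n)^{-1/2}$ for $n$ large. Feeding Proposition \ref{An-kol} into (\ref{kol-tildeY12}), and using that $(n\Delta_n)^{-1/2}\le\max((n\Delta_n)^{-1/2},(n\Delta_n^2)^{1/3})$ together with $\ln(n\Delta_n)=\ln T_n\to+\infty$ (so eventually $\ln(n\Delta_n)\ge1$ and the $A(n)$-rate can be inflated by this log factor), both contributions are dominated by $c(\theta)\ln(n\Delta_n)\max((n\Delta_n)^{-1/2},(n\Delta_n^2)^{1/3})$, which is the claim; the convergence in law then follows because $n\Delta_n^2\to0$ forces the right-hand side to $0$.

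\emph{Main obstacle.} There is no deep difficulty remaining, since Lemma \ref{norm-delta} and Proposition \ref{An-kol} carry the analytic weight; the delicate point is purely bookkeeping — ensuring the $\ln$ factor produced by the Michel--Pfanzagl decoupling of $B(n)$ is genuinely absorbed into a single $\ln(n\Delta_n)$ multiplying the maximum of the two rates, and that the constant $\beta$ stays in the admissible window $(\tfrac{2\sqrt3}{3}\sigma_1\sigma_2,4]$ demanded by Corollary \ref{kol-sum} for all large $n$, which it does because $\mathbf E[|B(n)|]\to0$.
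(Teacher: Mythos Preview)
Your proposal is correct and follows essentially the same route as the paper: decompose $\tilde Y_{12}(n)/\sqrt{T_n}=A(n)-B(n)$, apply Corollary \ref{kol-sum} to the product-normal remainder $B(n)$ (yielding (\ref{kol-tildeY12})), bound $\mathbf E[\tilde X_1^2(n)]$ by the discrete autocovariance computation (\ref{Xtilde-nom}) so that $\mathbf E[|B(n)|]\le c(\theta)(n\Delta_n)^{-1/2}$, exploit the monotonicity of $x\mapsto x(1+\ln(4/x))$ on $(0,4)$, and then invoke Proposition \ref{An-kol}. Your observation that the remaining issue is bookkeeping---absorbing the logarithm and checking $\beta$ stays in the admissible window---is exactly right and matches what the paper does.
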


\begin{example}
\label{boundsY12}  If $\Delta_n = n^{-\lambda}$ with $\frac{1}{2} < \lambda
<1$, then we have  
\begin{equation*}
d_{Kol}\left( \frac{\tilde{Y}_{12}(n)}{\sqrt{T_n}}, \mathcal{N}(0, \frac{1}{%
4 \theta^3})\right) \leqslant C(\theta) \times(1-\lambda)\times\left\{ 
\begin{array}{ll}
\ln(n)\times n^{\frac{1- 2\lambda}{3}} & \mbox{ if } \frac{1}{2} < \lambda
\leqslant \frac{5}{7} \\ 
&  \\ 
\ln(n) \times {n ^{\frac{\lambda-1}{2}}} & \mbox{ if } \frac{5}{7} \leqslant
\lambda < 1.%
\end{array}
\right.
\end{equation*}
Consequently,  
\begin{equation*}
\frac{\tilde{Y}_{12}(n)}{\sqrt{T_n}} \overset{\mathcal{L}}{\longrightarrow} 
\mathcal{N}\left(0, \frac{1}{4\theta^3}\right)
\end{equation*}
as $n \rightarrow +\infty$.
\end{example}

\subsection{The denominator term}

We denote the denominator term in $\tilde{\rho}(n)$ by $\tilde{D}(n)$, i.e.  
\begin{equation}
\tilde{D}(n):=2\theta \sqrt{\frac{\tilde{Y}_{11}(n)}{T_{n}}\times \frac{%
\tilde{Y}_{22}(n)}{T_{n}}}  \label{denom-discret}
\end{equation}%
According to Proposition \ref{estim-kol} assertion 2, we need to estimate $%
\mathbf{P}(|\tilde{D}(n)-1|>\varepsilon )$ for instance for $0<\varepsilon <1
$. Thus, denoting $\bar{Y}_{11}(n):=\frac{\tilde{Y}_{11}(n)}{T_{n}/2\theta }$%
, then similarly to the calculations performed in the continuous case, i.e. (%
\ref{estim-Z-continu}), we get 
\begin{equation*}
\mathbf{P}\left( |\tilde{D}(n)-1|>\varepsilon \right) \leqslant 4\mathbf{P}%
\left( |\bar{Y}_{11}(n)-1|>\frac{\varepsilon }{3}\right) 
\end{equation*}%
Writing $\varepsilon $ instead of $\varepsilon /3$, by Proposition \ref%
{estim-kol} assertion 2 applied to $\tilde{\rho}(n)$, we get 
\begin{equation}
d_{Kol}\left( \sqrt{\theta }\sqrt{T_{n}}\tilde{\rho}(n),N\right) \leqslant
d_{Kol}\left( 2\theta ^{3/2}\frac{\tilde{Y_{12}}(n)}{\sqrt{T_{n}}},N\right)
+4\times \mathbf{P}\left( |\bar{Y}_{11}(n)-1|>\varepsilon \right)
+3\varepsilon .  \label{rhoestim-discret}
\end{equation}%
where $N\sim \mathcal{N}(0,1)$. It remains to control the term $\mathbf{P}%
\left( |\bar{Y}_{11}(n)-1|>\varepsilon \right) $. we have 
\begin{equation}
\bar{Y}_{11}(n)=D_{1,n}+D_{2,n}-2\theta \tilde{X}_{1}^{2}(n),
\label{decomp-Y11}
\end{equation}%
where 
\begin{equation}
D_{1,n}:=\frac{2\theta }{n}\sum\limits_{k=0}^{n-1}\left( X_{1}^{2}(t_{k})-%
\mathbf{E}[X_{1}^{2}(t_{k})]\right) \quad \text{and}\quad D_{2,n}:=\frac{%
2\theta }{n}\sum\limits_{k=0}^{n-1}\mathbf{E}[X_{1}^{2}(t_{k})]  \label{D12}
\end{equation}

\begin{lemma}
\label{D1n-norm} Consider $D_{1,n}$ defined in (\ref{D12}), then for every
large $n $, we have 
\begin{equation*}
\mathbf{E}\left[D^2_{1,n}\right] \leqslant 2\left(1+ \frac{1}{\theta}%
\right)\times (n \Delta_{n})^{-1}.
\end{equation*}
\begin{proof}
The sequence $D_{1,n}$ can be written as follows :  
\begin{align*}
D_{1,n} & = \frac{2 \theta}{n} \sum \limits_{k=0}^{n-1} \left(
I_{1}^{W_1}(f_{t_{k}}) - \|f_{t_{k}}\|_{L^{2}([0,T_n])} \right) \\
& = \frac{2 \theta}{n} \sum\limits_{k=0}^{n-1} I^{W_1}_{2}(f^{\otimes
2}_{t_{k}}) \\
& = I^{W_1}_{2}(k_n),
\end{align*}
where $k_n := \frac{2 \theta}{n}\sum\limits_{k=0}^{n-1}f^{\otimes 2}_{t_{k}}$%
. Thus  
\begin{align*}
\mathbf{E}[D^{2}_{1,n}] & = 2 \|k_n\|^2_{L^{2}([0,T_n]^2)} \\
& = \frac{2 \times (2\theta)^{2}}{n^{2}} \sum \limits_{k_1,k_2 =0}^{n-1}
\langle f^{\otimes 2}_{t_{k_1}}, f^{\otimes
2}_{t_{k_2}}\rangle_{L^{2}([0,T_n]^2)} \\
& = \frac{2 \times (2\theta)^{2}}{n^{2}} \sum \limits_{k_1,k_2 =0}^{n-1}
\left(\langle f_{t_{k_1}}, f_{t_{k_2}}\rangle_{L^{2}([0,T_n])} \right)^2 \\
& = \frac{2 \times (2\theta)^{2}}{n^{2}} \sum \limits_{k_1,k_2 =0}^{n-1}
\left( \mathbf{E}[X_1(t_{k_1})X_1(t_{k_2})] \right)^2 \\
&= \frac{2 \times (2\theta)^{2}}{n^2} \sum\limits_{k_1 =0}^{n-1} \mathbf{E}%
[X_1(t_{k_1})^2]^2 + \frac{2 \times (2\theta)^{2}}{n^{2}} \sum \sum
_{\substack{ k_{1},k_{2}= 0 \\ k_{1} \neq k_{2} }}^{n-1} \left( \mathbf{E}%
[X_1(t_{k_1})X_1(t_{k_2})] \right)^2 \\
& = \frac{2 \times (2\theta)^{2}}{n^{2}} \sum \limits_{k_1=0}^{n-1} \left( 
\frac{1-e^{-2 \theta t_{k_1}}}{2 \theta}\right)^{2} + \frac{4 \times
(2\theta)^{2}}{n^{2}} \sum\limits_{k_1=0}^{n-2} \sum \limits_{k_2 =
k_1+1}^{n-1} \left( \mathbf{E}[X_1(t_{k_1})X_1(t_{k_2})] \right)^2 \\
& \leqslant \frac{2}{n} + \frac{4 \times (2\theta)^{2}}{n^{2}}
\sum\limits_{k_1=0}^{n-2} \sum \limits_{k_2 = k_1+1}^{n-1} \left( \mathbf{E}%
[X_1(t_{k_1})X_1(t_{k_2})] \right)^2.
\end{align*}
For the right hand partial sum, we can write  
\begin{equation*}
\frac{4}{n^{2}} \sum\limits_{k_1=0}^{n-2} \sum \limits_{k_2 = k_1+1}^{n-1}
\left( \mathbf{E}[X_1(t_{k_1})X_1(t_{k_2})] \right)^2 \leqslant \frac{4}{%
n^{2}} \sum\limits_{k_1=0}^{n-2} \sum \limits_{k_2 = k_1+1}^{n-1}
\rho^2(t_{k_{2}}-t_{k_{1}})
\end{equation*}
Moreover,  
\begin{align*}
\frac{4}{n^{2}} \sum\limits_{k_1=0}^{n-2} \sum \limits_{k_2 = k_1+1}^{n-1}
\rho^2(t_{k_{2}}-t_{k_{1}}) & = \frac{4}{n^2}\sum\limits_{k_1=0}^{n-2} \sum
\limits_{k_2 = k_1+1}^{n-1} \rho^2(\Delta_n(k_{2}- k_{1})) \\
& = \frac{4}{n^2} \sum \limits_{r=1}^{n-1}(n-r) \rho^{2}(\Delta_n r) \\
& = \frac{1}{\theta^2} \frac{1}{n^2} \sum \limits_{r=1}^{n-1}(n-r) e^{-2
\theta \Delta_n r} \\
& \leqslant \frac{1}{\theta^2} \frac{1}{n} \frac{1}{(1-e^{-2 \Delta_n
\theta})} \\
& \leqslant \frac{1}{2 \theta^3} \frac{1}{n \Delta_n}.
\end{align*}
For every large $n$. The desired result follows since $\max\left(n^{-1},
(n\Delta_n)^{-1}\right) = (n\Delta_n)^{-1} $.
\end{proof}
\end{lemma}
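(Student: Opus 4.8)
The plan is to realise $D_{1,n}$ as a single element of the second Wiener chaos of $W_{1}$, so that $\mathbf{E}[D_{1,n}^{2}]$ reduces, via the isometry property, to an explicit double sum of squared covariances of the Ornstein--Uhlenbeck process --- the discrete analogue of the bound on $\mathbf{E}[I_{2}^{W_{1}}(k_{T})^{2}]$ carried out in the continuous case just before Theorem~\ref{rho-case-continuous}. Concretely, since $X_{1}(t_{k})=I_{1}^{W_{1}}(f_{t_{k}})$, the symmetrised product formula (\ref{produit}) gives $X_{1}^{2}(t_{k})-\mathbf{E}[X_{1}^{2}(t_{k})]=I_{2}^{W_{1}}(f_{t_{k}}^{\otimes 2})$, so that from (\ref{D12})
\[
D_{1,n}=I_{2}^{W_{1}}(k_{n}),\qquad k_{n}:=\frac{2\theta}{n}\sum_{k=0}^{n-1}f_{t_{k}}^{\otimes 2},
\]
with $k_{n}$ symmetric, and the isometry property (\ref{Isometryproperty01}), together with the identities $\langle f_{t_{k_{1}}}^{\otimes 2},f_{t_{k_{2}}}^{\otimes 2}\rangle=\langle f_{t_{k_{1}}},f_{t_{k_{2}}}\rangle^{2}$ and $\langle f_{s},f_{t}\rangle_{L^{2}([0,T_{n}])}=\mathbf{E}[X_{1}(s)X_{1}(t)]$, yields
\[
\mathbf{E}[D_{1,n}^{2}]=2\Vert k_{n}\Vert_{L^{2}([0,T_{n}]^{2})}^{2}=\frac{8\theta^{2}}{n^{2}}\sum_{k_{1},k_{2}=0}^{n-1}\big(\mathbf{E}[X_{1}(t_{k_{1}})X_{1}(t_{k_{2}})]\big)^{2}.
\]

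Next I would split this double sum into its diagonal and off-diagonal parts. On the diagonal, $\mathbf{E}[X_{1}(t_{k})^{2}]=(1-e^{-2\theta t_{k}})/(2\theta)\leqslant 1/(2\theta)$, so the diagonal contribution is at most $\frac{8\theta^{2}}{n^{2}}\cdot n\cdot\frac{1}{4\theta^{2}}=\frac{2}{n}$, which is $\leqslant 2(n\Delta_{n})^{-1}$ for $n$ large since $\Delta_{n}\to 0$. For the off-diagonal part I would dominate the genuine OU covariance by the stationary one, $\mathbf{E}[X_{1}(s)X_{1}(t)]\leqslant \frac{1}{2\theta}e^{-\theta|s-t|}$ (the inequality already used in the proof of Proposition~\ref{cvg-loi-FT}), reindex the sum by $r=k_{2}-k_{1}$, bound the factor $n-r$ by $n$, and sum the geometric series $\sum_{r\geqslant 1}e^{-2\theta\Delta_{n}r}$. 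Using $1-e^{-2\theta\Delta_{n}}\geqslant \theta\Delta_{n}$, which holds once $\Delta_{n}$ is small, this turns into a bound of order $(\theta n\Delta_{n})^{-1}$, and adding the two contributions produces the announced estimate of the form $2(1+\theta^{-1})(n\Delta_{n})^{-1}$.

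I do not expect a genuine obstacle: the argument is the discrete twin of the continuous variance computation, and the only delicate part is bookkeeping of the $\theta$-dependent constants --- in particular, tracking the $2\theta$ normalisation of $k_{n}$ against the $(2\theta)^{-1}$ appearing in the OU covariance --- together with the elementary inequality $1-e^{-x}\geqslant x/2$ for $x$ small, which is precisely where the hypothesis that $n$ is large (equivalently, $\Delta_{n}$ small) is used, in order to replace $1-e^{-2\theta\Delta_{n}}$ by a multiple of $\Delta_{n}$.
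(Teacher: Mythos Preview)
Your proposal is correct and follows essentially the same route as the paper: represent $D_{1,n}$ as $I_{2}^{W_{1}}(k_{n})$, use the isometry to rewrite $\mathbf{E}[D_{1,n}^{2}]$ as $\tfrac{8\theta^{2}}{n^{2}}\sum_{k_{1},k_{2}}(\mathbf{E}[X_{1}(t_{k_{1}})X_{1}(t_{k_{2}})])^{2}$, split into diagonal and off-diagonal parts, dominate the off-diagonal covariances by the stationary ones $\tfrac{1}{2\theta}e^{-\theta|t-s|}$, reindex by $r=k_{2}-k_{1}$ and sum the geometric series, and finally invoke an elementary lower bound on $1-e^{-2\theta\Delta_{n}}$ for small $\Delta_{n}$. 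The only (harmless) discrepancy is in the constant: your inequality $1-e^{-2\theta\Delta_{n}}\geqslant \theta\Delta_{n}$ yields an off-diagonal bound $4/(\theta n\Delta_{n})$ rather than $2/(\theta n\Delta_{n})$, so you land on $2(1+2/\theta)(n\Delta_{n})^{-1}$ instead of the stated $2(1+1/\theta)(n\Delta_{n})^{-1}$ --- a cosmetic difference that does not affect any downstream argument.
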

For the sequence $D_{2,n}$, we have 
\begin{align}
|D_{2,n}-1|& \leqslant \frac{2\theta }{n}\sum\limits_{k=0}^{n-1}\left\vert 
\mathbf{E}[X_{1}^{2}(t_{k})]-\frac{1}{2\theta }\right\vert   \notag \\
& \leqslant \frac{2\theta }{n}\sum\limits_{k=0}^{n-1}e^{-2\theta k\Delta
_{n}}\leqslant \frac{2\theta }{n}\frac{1}{(1-e^{-2\theta \Delta _{n}})}%
\leqslant \frac{1}{n\Delta _{n}}  \label{D2n-norm}
\end{align}%
For every large $n$, $D_{2,n}=1+O\left( (n\Delta _{n})^{-1}\right) $.
Therefore, following exactly the same analysis done for the denominator term
in the continuous case and choosing $\varepsilon =\varepsilon (n)=c\times 
\frac{\ln (n\Delta _{n})}{\sqrt{n\Delta _{n}}}$, we get the existence of a
constant $c(\theta )$, such that for $n$ large enough 
\begin{equation}
\mathbf{P}\left( |\tilde{D}(n)-1|>\varepsilon (n)\right) \leqslant c(\theta )%
\frac{\ln (n\Delta _{n})}{\sqrt{n\Delta _{n}}},
\end{equation}%
The previous estimates will allow to prove first a Strong law result by
showing that $\tilde{\rho}(n)$ converges to $0$ almost surely as $%
n\rightarrow +\infty $, then the convergence in law of the statistic $\tilde{%
\rho}(n)$ given with its rate of convergence as $n\rightarrow +\infty $.

\begin{proposition}
\label{lln} Assume that $\Delta_n = n^{-\lambda}$, for $\frac{1}{2}< \lambda
< 1$, then we have almost surely 
\begin{equation*}
\tilde{\rho}(n) \longrightarrow 0 \quad \text{as} \quad n \rightarrow
+\infty.
\end{equation*}
\end{proposition}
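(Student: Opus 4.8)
The plan is to split $\tilde{\rho}(n)$ into its numerator and its two denominator factors, establish an almost-sure limit for each, and recombine. Since $\tilde{Y}_{ij}(n)$ can be rewritten in the centered form $\Delta_n\sum_k (X_i(t_k)-\tilde X_i(n))(X_j(t_k)-\tilde X_j(n))$, Cauchy--Schwarz gives $|\tilde{\rho}(n)|\le 1$, and we may write
\[\tilde{\rho}(n)=\frac{\tilde{Y}_{12}(n)/T_{n}}{\sqrt{\big(\tilde{Y}_{11}(n)/T_{n}\big)\big(\tilde{Y}_{22}(n)/T_{n}\big)}},\]
so it suffices to prove $\tilde{Y}_{12}(n)/T_{n}\to 0$ almost surely and $\tilde{Y}_{ii}(n)/T_{n}\to \frac{1}{2\theta}$ almost surely for $i=1,2$; the latter limit being strictly positive, the denominator is eventually bounded away from $0$ almost surely, and the finitely many early terms cause no trouble. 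The engine for each of the three limits is Lemma \ref{Borel-Cantelli} combined with the hypercontractivity bound (\ref{hypercontractivity}): a random variable lying in a fixed Wiener chaos has all its $L^{p}$ norms controlled by its $L^{2}$ norm, so an $L^{2}$ decay rate in $n$ upgrades for free to an $L^{p}$ rate for every $p\ge1$, which Lemma \ref{Borel-Cantelli} then converts into an almost-sure rate. I use throughout that $T_{n}=n\Delta_{n}=n^{1-\lambda}\to\infty$ and $n\Delta_{n}^{2}=n^{1-2\lambda}\to0$, both granted by $\tfrac12<\lambda<1$.

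For the numerator, the first step is to record, via Lemma \ref{I2}, that $X_{1}(t_{k})X_{2}(t_{k})=I_{2}^{W}(\bar{f}_{t_{k}}\otimes\bar{\bar{f}}_{t_{k}})$ and $\tilde{X}_{1}(n)\tilde{X}_{2}(n)=I_{2}^{W}(\bar{g}_{n}\otimes\bar{\bar{g}}_{n})$, so that the decomposition (\ref{decomY12-discret}) presents $\tilde{Y}_{12}(n)/\sqrt{T_{n}}=A(n)-B(n)$ as an element of the second Wiener chaos $\mathcal{H}_{2}$. Its $L^{2}$ norm is bounded uniformly in $n$: by Lemma \ref{norm-delta} and Lemma \ref{varianceFT}, $\Vert A(n)\Vert_{L^{2}}^{2}\le 2\,\mathbf{E}[\delta^{2}(n)]+2\,\mathbf{E}[F_{T_{n}}^{2}]\le 2C_{\theta}\,n\Delta_{n}^{2}+2\big(\tfrac{1}{4\theta^{3}}+\tfrac{C(\theta)}{T_{n}}\big)$, while (\ref{Xtilde-nom}) and the independence of $X_{1},X_{2}$ give $\mathbf{E}[B(n)^{2}]=T_{n}\,\mathbf{E}[\tilde{X}_{1}^{2}(n)]\,\mathbf{E}[\tilde{X}_{2}^{2}(n)]\le c(\theta)(n\Delta_{n})^{-1}$. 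Dividing by $\sqrt{T_{n}}$ and applying (\ref{hypercontractivity}) with $q=2$ yields, for every $p\ge1$, $\Vert\tilde{Y}_{12}(n)/T_{n}\Vert_{L^{p}}\le(p-1)\,T_{n}^{-1/2}\,\Vert\tilde{Y}_{12}(n)/\sqrt{T_{n}}\Vert_{L^{2}}\le c_{p}\,n^{-(1-\lambda)/2}$, so Lemma \ref{Borel-Cantelli} with $\gamma=(1-\lambda)/2>0$ gives $\tilde{Y}_{12}(n)/T_{n}\to0$ almost surely.

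For the denominator I would use the decomposition (\ref{decomp-Y11}), namely $\bar{Y}_{11}(n)=D_{1,n}+D_{2,n}-2\theta\,\tilde{X}_{1}^{2}(n)$ with $\bar{Y}_{11}(n):=2\theta\,\tilde{Y}_{11}(n)/T_{n}$, and treat the three summands separately. First, $D_{2,n}$ is deterministic with $|D_{2,n}-1|\le(n\Delta_{n})^{-1}\to0$ by (\ref{D2n-norm}). Second, $D_{1,n}=I_{2}^{W_{1}}(k_{n})\in\mathcal{H}_{2}$, and Lemma \ref{D1n-norm} gives $\Vert D_{1,n}\Vert_{L^{2}}^{2}\le 2(1+\tfrac1\theta)(n\Delta_{n})^{-1}$, so (\ref{hypercontractivity}) yields $\Vert D_{1,n}\Vert_{L^{p}}\le c_{p}\,n^{-(1-\lambda)/2}$ and Lemma \ref{Borel-Cantelli} gives $D_{1,n}\to0$ almost surely. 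Third, $\tilde{X}_{1}(n)=I_{1}^{W_{1}}(g_{n})\in\mathcal{H}_{1}$, and (\ref{Xtilde-nom}) together with (\ref{hypercontractivity}) for $q=1$ give $\Vert\tilde{X}_{1}^{2}(n)\Vert_{L^{p}}=\Vert\tilde{X}_{1}(n)\Vert_{L^{2p}}^{2}\le(2p-1)\,\mathbf{E}[\tilde{X}_{1}^{2}(n)]\le c_{p}\,n^{-(1-\lambda)}$, so $\tilde{X}_{1}^{2}(n)\to0$ almost surely by Lemma \ref{Borel-Cantelli}. Summing the three terms, $\bar{Y}_{11}(n)\to1$ almost surely, i.e. $\tilde{Y}_{11}(n)/T_{n}\to\frac{1}{2\theta}$ almost surely, and the identical argument run with $W_{2}$ in place of $W_{1}$ does it for $\tilde{Y}_{22}(n)/T_{n}$. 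Combined with the numerator limit, this gives $\tilde{\rho}(n)\to0$ almost surely.

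I do not expect a genuine analytic obstacle here: the statement is essentially a soft consequence of the $L^{2}$ estimates already proved for the central limit theorem. The one point that needs a little care is the bookkeeping that places $\tilde{Y}_{12}(n)/\sqrt{T_{n}}$, $D_{1,n}$, and $\tilde{X}_{1}(n)$ in fixed Wiener chaoses, since that is exactly what makes hypercontractivity available to promote the known $L^{2}$ rates to $L^{p}$ rates and, through Lemma \ref{Borel-Cantelli}, to almost-sure rates with the strictly positive exponents $\tfrac{1-\lambda}{2}$ and $1-\lambda$ that the constraint $\lambda<1$ provides.
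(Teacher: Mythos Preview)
Your proof is correct and follows essentially the same approach as the paper: split $\tilde\rho(n)$ into numerator and denominator, invoke the already-established $L^{2}$ estimates (Lemmas \ref{varianceFT}, \ref{norm-delta}, \ref{D1n-norm}, and (\ref{D2n-norm}), (\ref{Xtilde-nom})), upgrade to $L^{p}$ via hypercontractivity, and convert to almost-sure convergence through Lemma \ref{Borel-Cantelli}. The only cosmetic differences are that the paper treats the numerator pieces $\delta(n)/\sqrt{T_n}$, $F_{T_n}/\sqrt{T_n}$, $B(n)/\sqrt{T_n}$ separately rather than as a single second-chaos element, and handles the denominator by first packaging the $L^{p}$ bound on the full product $2\theta\sqrt{\tilde Y_{11}\tilde Y_{22}}/T_n-1$ into a standalone Proposition \ref{denom-discret} before applying Borel--Cantelli directly, whereas you show each factor $\tilde Y_{ii}(n)/T_n\to\tfrac{1}{2\theta}$ a.s.\ individually; both routes rest on the same three-term decomposition (\ref{decomp-Y11}) and the same estimates.
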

\begin{proof}
We can write $\tilde{\rho}(n)$ as follows 
\begin{equation*}
\tilde{\rho}(n)=\frac{\frac{\tilde{Y}_{12}(n)}{T_{n}}}{\sqrt{\frac{\tilde{Y}%
_{11}(n)}{T_{n}}\times \frac{\tilde{Y}_{22}(n)}{T_{n}}}}
\end{equation*}%
For the numerator term, we have by the decomposition (\ref{decomY12-discret}%
), 
\begin{equation*}
\frac{\tilde{Y}_{12}(n)}{T_{n}}=\frac{A(n)}{\sqrt{T_{n}}}-\frac{B(n)}{\sqrt{%
T_{n}}}
\end{equation*}%
For the sequence $A(n)$, we have 
\begin{equation*}
\frac{A(n)}{\sqrt{T_{n}}}=\frac{\delta (n)}{\sqrt{T_{n}}}+\frac{F_{T_{n}}}{%
\sqrt{T_{n}}},
\end{equation*}%
Then by Lemma \ref{norm-delta}, we get $\mathbf{E}\left[ \left( \frac{\delta
(n)}{\sqrt{T_{n}}}\right) ^{2}\right] \leqslant C_{\theta }n^{-\lambda }$,
and by the hypercontractivity property and Lemma \ref{Borel-Cantelli}, we
obtain $\frac{\delta (n)}{\sqrt{T_{n}}}\rightarrow 0$, a.s. as $n\rightarrow
+\infty $. By the same argument, and using Lemma \ref{varianceFT}, we obtain 
$\mathbf{E}\left[ \left( \frac{F_{T_{n}}}{\sqrt{T_{n}}}\right) ^{2}\right]
\leqslant C_{\theta }n^{-(1-\lambda )}$, and we then get $\frac{F_{T_{n}}}{%
\sqrt{T_{n}}}\rightarrow 0$, a.s. as $n\rightarrow +\infty $. Consequently,
we have a.s. as $n\rightarrow +\infty $, $\frac{A(n)}{\sqrt{T_{n}}}%
\rightarrow 0.$ 
For the sequence $B(n)$, $\mathbf{E}\left[ \left( \frac{B(n)}{\sqrt{T_{n}}}%
\right) ^{2}\right] =\frac{1}{T_{n}}\mathbf{E}[\tilde{X}_{1}^{2}(n)]\mathbf{E%
}[\tilde{X}_{2}^{2}(n)]\leqslant C\times n^{-(1-\lambda )}$, thus $\frac{B(n)%
}{\sqrt{T_{n}}}$ a.s. as $n\rightarrow +\infty $. Finally, 
\begin{equation*}
\frac{\tilde{Y}_{12}(n)}{{T_{n}}}\longrightarrow 0,
\end{equation*}%
a.s. as $n\rightarrow +\infty .$
For the denominator term, we will need the following proposition
\begin{proposition}
\label{denom-discret}  For every $p \geq 1$, there exists a constant ${c}%
(p,\theta)$ depending on $p$ and $\theta$, such that  
\begin{equation*}
\mathbf{E}\left[ \left|2 \theta \sqrt{\frac{\tilde{Y}_{11}(n)}{T_n}\times 
\frac{\tilde{Y}_{22}(n)}{T_n}} -1 \right|^{p}\right]^{\frac{1}{p}} \leqslant 
{c}(p,\theta) \times T^{-\frac{1}{2}}_{n}
\end{equation*}
\end{proposition}
\begin{proof}
Using the fact that if $X\geq 0$ p.s. then we have $\mathbf{E}[|\sqrt{X}%
-1|^{p}]\leqslant \mathbf{E}[|X-1|^{p}]$ for every $p>0$, then by the
notation $\bar{Y}_{ii}(n)=\frac{\tilde{Y}_{ii}(n)}{T_{n}/2\theta }$, $i=1,2$%
, we get 
\begin{align}
\mathbf{E}\left[ \left\vert 2\theta \sqrt{\frac{\tilde{Y}_{11}(n)}{T_{n}}%
\times \frac{\tilde{Y}_{22}(n)}{T_{n}}}-1\right\vert ^{p}\right] ^{1/p}&
\leqslant \times \mathbf{E}\left[ \left\vert \bar{Y}_{11}(n)\times \bar{Y}%
_{22}(n)-1\right\vert ^{p}\right] ^{1/p}  \notag  \label{denomterm-dis} \\
& \leqslant \mathbf{E}\left[ |\bar{Y}_{11}(n)|^{p}\right] ^{1/p}\times 
\mathbf{E}\left[ |\bar{Y}_{22}(n)-1|^{p}\right] ^{1/p}+\mathbf{E}\left[ |%
\bar{Y}_{11}(n)-1|^{p}\right] ^{1/p}  \notag
\end{align}%
Moreover, since by the decomposition (\ref{decomp-Y11}), $\bar{Y}%
_{11}(n)=D_{1,n}+D_{2,n}-2\theta \bar{X}_{1}^{2}(n)$, where $D_{1,n}$ and $%
D_{2,n}$ are defined in (\ref{D12}), we get 
\begin{align*}
\mathbf{E}\left[ |\bar{Y}_{11}(n)-1|^{p}\right] ^{1/p}& \leqslant \mathbf{E}%
\left[ |D_{1,n}|^{p}\right] ^{1/p}+|D_{2,n}-1|+2\theta \mathbf{E}\left[ 
\tilde{X}_{1}^{2}(n)^{p}\right] ^{1/p} \\
& \leqslant c(p,\theta )\times T_{n}^{-1/2},
\end{align*}%
where we used the hypercontractivity property (\ref{hypercontractivity}),
Lemma \ref{D1n-norm} and the estimate (\ref{D2n-norm}), with 
\begin{equation*}
c(p,\theta )=3\times \max \left( \left\{ (p-1)\mathbf{1}_{p\geq 2}+\mathbf{1}%
_{p=1}\right\} \sqrt{{2}\left( 1+\frac{1}{\theta }\right) },1,(2p-1)\frac{1}{%
\theta }\left( 1+\frac{2}{\theta }\right) \right) .
\end{equation*}%
The result of Proposition \ref{denom-discret} is therefore established.
\end{proof}
By Proposition \ref{denom-discret}, we have for all $\eta >0$, 
\begin{align*}
\sum\limits_{n=1}^{+\infty }\mathbf{P}\left( \left\vert 2\theta \sqrt{\frac{%
\tilde{Y}_{11}(n)}{T_{n}}\times \frac{\tilde{Y}_{22}(n)}{T_{n}}}%
-1\right\vert >\eta \right) & \leqslant \frac{1}{\eta ^{p}}%
\sum\limits_{n=1}^{+\infty }\mathbf{E}\left[ \left\vert 2\theta \sqrt{\frac{%
\tilde{Y}_{11}(n)}{T_{n}}\times \frac{\tilde{Y}_{22}(n)}{T_{n}}}%
-1\right\vert ^{p}\right]  \\
& \leqslant \frac{1}{\eta ^{p}}\sum\limits_{n=1}^{+\infty }\frac{1}{%
n^{(1-\lambda )p/2}}<+\infty ,
\end{align*}%
for any $p>\frac{2}{1-\lambda }$, it follows from Borel-Cantelli's Lemma
that 
\begin{equation*}
\sqrt{\frac{\tilde{Y}_{11}(n)}{T_{n}}\times \frac{\tilde{Y}_{22}(n)}{T_{n}}}%
\rightarrow \frac{1}{2\theta },
\end{equation*}%
almost surely as $n\rightarrow +\infty $, which finishes the proof.
\end{proof}

\begin{remark}
The previous proposition was proved in the scale $\Delta _{n}=n^{-\lambda }$%
, for $\frac{1}{2}<\lambda <1$, for ease of presentation, but one can also
show that it holds for any mesh $\Delta _{n}$ satisfying:

\begin{itemize}
\item $n \Delta^{\alpha}_n \rightarrow 0$, a.s. for some $\alpha > 1$ as $n
\rightarrow +\infty$.

\item $\sum\limits_{n \geq 1} \frac{1}{n^{p} \Delta^{p}_n} < +\infty$ for
any $p > 1$.
\end{itemize}
\end{remark}

\begin{theorem}
\label{rho-case-discrete} There exists a constant $c(\theta)$ such that for $%
n$ large enough, we have 
\begin{equation*}
d_{Kol} \left( \sqrt{\theta} \sqrt{T_n} \tilde{\rho}(n), \mathcal{N}(0,
1)\right) \leqslant c(\theta) \times \ln(n\Delta_n)
\max\left((n\Delta_n)^{-1/2}, (n\Delta^2_n)^{\frac{1}{3}} \right)
\end{equation*}
In particular, if $n \Delta^2_n \rightarrow 0 $ as $n \rightarrow +\infty$, 
\begin{equation*}
\sqrt{T_n} \tilde{\rho}(n) \overset{\mathcal{L}}{\longrightarrow} \mathcal{N}%
\left(0, \frac{1}{\theta}\right),
\end{equation*}
as $n \rightarrow +\infty$.
\end{theorem}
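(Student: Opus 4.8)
The plan is to repeat, almost verbatim, the three-term argument behind Theorem~\ref{rho-case-continuous}, now feeding in the discrete estimates assembled above. Starting from the decomposition (\ref{decomp-rho-n}) of $\sqrt{\theta}\sqrt{T_n}\tilde\rho(n)$ as a ratio whose numerator is $2\theta^{3/2}\tilde Y_{12}(n)/\sqrt{T_n}$ and whose denominator is $\tilde D(n)=2\theta\sqrt{(\tilde Y_{11}(n)/T_n)(\tilde Y_{22}(n)/T_n)}$, I apply Proposition~\ref{estim-kol} assertion~2. This gives exactly estimate (\ref{rhoestim-discret}): for every $0<\varepsilon<1$,
\begin{equation*}
d_{Kol}\!\left(\sqrt{\theta}\sqrt{T_n}\tilde\rho(n),N\right)\leqslant d_{Kol}\!\left(2\theta^{3/2}\tfrac{\tilde Y_{12}(n)}{\sqrt{T_n}},N\right)+4\,\mathbf{P}\!\left(|\bar{Y}_{11}(n)-1|>\varepsilon\right)+3\varepsilon,
\end{equation*}
with $N\sim\mathcal{N}(0,1)$, so the whole task reduces to bounding the two terms on the right.

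For the numerator, the (unlabeled) theorem immediately preceding Example~\ref{boundsY12} already supplies $d_{Kol}(\tilde Y_{12}(n)/\sqrt{T_n},\mathcal{N}(0,1/4\theta^3))\leqslant c(\theta)\ln(n\Delta_n)\max((n\Delta_n)^{-1/2},(n\Delta_n^2)^{1/3})$; since multiplying by the deterministic factor $2\theta^{3/2}$ turns the limiting variance $1/4\theta^3$ into $1$ without changing the Kolmogorov distance to a Gaussian, the first term on the right obeys the same bound. I would stress that this is precisely where both members of the $\max$ are born: $(n\Delta_n)^{-1/2}=T_n^{-1/2}$ is the rate of Proposition~\ref{cvg-loi-FT} for $F_{T_n}$, while $(n\Delta_n^2)^{1/3}$ arises from balancing the discretization error $\mathbf{E}[\delta^2(n)]\leqslant C_\theta\, n\Delta_n^2$ of Lemma~\ref{norm-delta} against $\varepsilon$ as in Proposition~\ref{An-kol}; the extra $\ln(n\Delta_n)$ is the Michel--Pfanzagl decoupling cost of the product-normal term $B(n)=\sqrt{T_n}\tilde X_1(n)\tilde X_2(n)$, handled by Corollary~\ref{kol-sum} together with the bound $\mathbf{E}[\tilde X_1^2(n)]\leqslant\theta^{-1}(1+2/\theta)(n\Delta_n)^{-1}$ from (\ref{Xtilde-nom}).

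For the denominator I reproduce the continuous-case tail bound. Using (\ref{decomp-Y11}), write $\bar{Y}_{11}(n)=D_{1,n}+D_{2,n}-2\theta\tilde X_1^2(n)$, where $D_{1,n}$ lies in the second Wiener chaos with $\mathbf{E}[D_{1,n}^2]\leqslant 2(1+1/\theta)(n\Delta_n)^{-1}$ (Lemma~\ref{D1n-norm}), $D_{2,n}$ is deterministic with $|D_{2,n}-1|\leqslant(n\Delta_n)^{-1}$ by (\ref{D2n-norm}), and $2\theta\,\mathbf{E}[\tilde X_1^2(n)]=O((n\Delta_n)^{-1})$ by (\ref{Xtilde-nom}). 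Splitting $\bar{Y}_{11}(n)$ into a genuinely quadratic piece (second chaos plus the correct centering constant, with variance $O((n\Delta_n)^{-1})$) plus a deterministic remainder equal to $1+O((n\Delta_n)^{-1})$, the chi-square-series tail estimate (\ref{estimate-y})--(\ref{estimateYt}) gives $\mathbf{P}(|\bar{Y}_{11}(n)-1|>\varepsilon)\leqslant K\exp(-c\,\varepsilon\sqrt{n\Delta_n})$ for $\varepsilon$ bounded below by a multiple of $(n\Delta_n)^{-1}$; choosing $\varepsilon=\varepsilon(n)=c\,\ln(n\Delta_n)/\sqrt{n\Delta_n}$ then bounds $4\,\mathbf{P}(|\bar{Y}_{11}(n)-1|>\varepsilon(n))+3\varepsilon(n)$ by $c(\theta)\ln(n\Delta_n)(n\Delta_n)^{-1/2}$.

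Adding the two contributions, and noting that $\ln(n\Delta_n)(n\Delta_n)^{-1/2}$ is already one of the two terms in the $\max$, yields the claimed bound $d_{Kol}(\sqrt{\theta}\sqrt{T_n}\tilde\rho(n),\mathcal{N}(0,1))\leqslant c(\theta)\ln(n\Delta_n)\max((n\Delta_n)^{-1/2},(n\Delta_n^2)^{1/3})$. The ``in particular'' statement is then immediate: under $n\Delta_n^2\to 0$ one automatically has $n\Delta_n=T_n\to\infty$, so the right-hand side tends to $0$, giving $\sqrt{\theta}\sqrt{T_n}\tilde\rho(n)\to\mathcal{N}(0,1)$ in law, equivalently $\sqrt{T_n}\tilde\rho(n)\to\mathcal{N}(0,1/\theta)$. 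The only point requiring genuine care — the main obstacle — is the denominator tail step: one must check that the leading coefficient $|\lambda_1(n)|$ in the chi-square series of $D_{1,n}-2\theta\tilde X_1^2(n)$ is of order $(n\Delta_n)^{-1/2}$, so that the choice $\beta\asymp(n\Delta_n)^{-1/2}\geqslant 4|\lambda_1(n)|$ used in (\ref{estimate-y}) is admissible, and that the term $2\theta\tilde X_1^2(n)$, which is not homogeneous in a single chaos, can be absorbed via hypercontractivity without spoiling the third-cumulant bound $k_3\leqslant 9\,\mathrm{Var}^{3/2}$; both go through exactly as in the derivation of (\ref{estimateYt}).
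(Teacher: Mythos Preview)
Your proposal is correct and follows essentially the same route as the paper: the decomposition (\ref{decomp-rho-n}) with Proposition~\ref{estim-kol}(2), the numerator bound from the theorem preceding Example~\ref{boundsY12}, and the denominator handled by transplanting the continuous second-chaos tail argument (\ref{estimate-y})--(\ref{estimateYt}) onto $\bar{Y}_{11}(n)$ via Lemma~\ref{D1n-norm}, (\ref{D2n-norm}), (\ref{Xtilde-nom}), with the choice $\varepsilon(n)=c\ln(n\Delta_n)/\sqrt{n\Delta_n}$. One small slip: in the last paragraph you assert that $n\Delta_n^2\to 0$ ``automatically'' forces $T_n=n\Delta_n\to\infty$, which is false in general (take $\Delta_n=n^{-1}$); rather, $T_n\to\infty$ is a standing hypothesis of the section, so the right-hand side tends to $0$ under the combined assumptions $T_n\to\infty$ and $n\Delta_n^2\to 0$.
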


\begin{remark}
\label{continu-vs-discret} The results obtained in Theorem \ref%
{rho-case-discrete} can be as efficient as those of Theorem  \ref%
{rho-case-continuous}, as long as one picks a step size very precisely. In
fact, Theorem \ref{rho-case-discrete} allows us to identify an optimal step
size $\Delta _{n}$ immediately, by requiring that $\left( n\Delta
_{n}^{2}\right) ^{1/3}$ is of the same order as $\left( n\Delta _{n}\right)
^{-1/2}$. By equating these two terms, we immediately find that it is
optimal to choose $\Delta _{n}$ on the order of $n^{-5/7}$. When choosing $%
\Delta _{n}=n^{-5/7}$, one then immediately finds that $T_{n}=n\Delta
_{n}=n^{2/7}$, which means that the speed in the Kolmogorov metric in
Theorem \ref{rho-case-discrete} is bounded above by $\ln (n)n^{-1/7}$ up to
a constant. Therefore, in terms of $T_{n}$, the rate of convergence is of
the order of $\ln (T_{n})\times T_{n}^{-1/2}$. We obtain therefore the same
speed as in the continuous case in Theorem \ref{rho-case-continuous}. It is
in this sense that the convergence rate in Theorem \ref{rho-case-discrete}
is as efficient in Theorem \ref{rho-case-continuous}.
\end{remark}

\section{Numerical results}

This section contains a numerical study of some of the properties of the
discrete version of Yule's nonsense correlation statistic, which we denoted
by $\tilde{\rho}(n)$ in (\ref{ro-discrete}). We first simulate the OU
processes $X_{1}$ and $X_{2}$ according to the following steps :

\begin{enumerate}
\item Set the values of $\theta$, the sample size $n$ and the mesh $\Delta_n
= n^{- \lambda}$, $\frac{1}{2} < \lambda < 1$.

\item Generate two independent Brownian motions $W^1$ and $W^2$.

\item Set $X^i_0 =0$, for $i=1,2$ and simulate the observations $%
X^i_{\Delta_n}$, $X^i_{2\Delta_n}$, ..., $X^i_{T_n}$, $i=1,2$ where $T_n =
n\times \Delta_n$ following the Euler scheme : 
\begin{equation*}
X^{i}_{t_j} = (1 - \theta \Delta_n)X^{i}_{t_{j-1}} + \left(W^{i}_{t_j} -
W^{i}_{t_{j-1}} \right) \quad j=1,..,n, \quad i=1,2.
\end{equation*}

\item We obtain a simulation of the sample paths of $X^1$ and $X^2$ based on 
$X^{i}_{t_{j}}$, $j=1,..,n$, $i=1,2$ by approximating $X^1$ and $X^2$ using
the linear process linking the points $(t_{j}, X^{i}_{t_{j}})_{\{ 1
\leqslant j \leqslant n\}}$ for $i=1,2$ as follows 
\begin{equation*}
X^i_{t} = \left(1- \theta(t-t_{j-1})X^{i}_{t_{j-1}} \right) +
\left(W^{i}_{t} - W^{i}_{t_{j-1}} \right), \quad \text{for all} \text{ } t
\in [t_j, t_{j-1}], \text{ \ } j=1,...,n.
\end{equation*}
The figures below are an example of four sample paths of $X^1$ and $X^2$ for
different values of the drift parameter $\theta$. 
\begin{figure}[h]
\begin{subfigure}[b]{0.5\linewidth}
		\centering
		\includegraphics[width=1\linewidth]{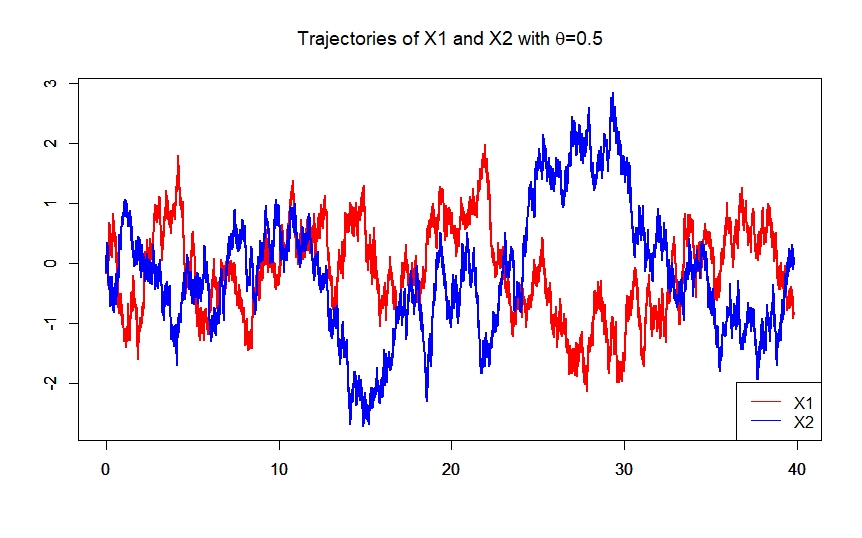}
	\end{subfigure}
\begin{subfigure}[b]{0.5\linewidth}
		\centering
		\includegraphics[width=1\linewidth]{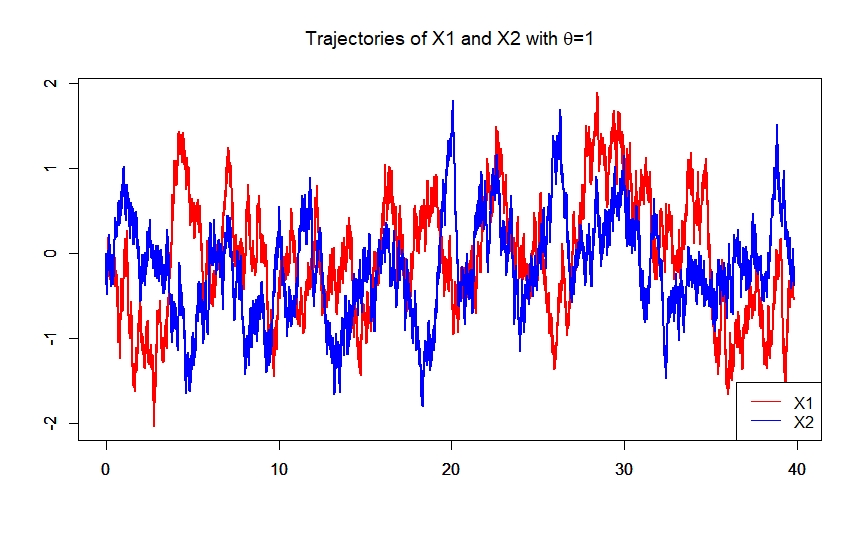}
	\end{subfigure}
\begin{subfigure}[b]{0.5\linewidth}
		\centering
		\includegraphics[width=1\linewidth]{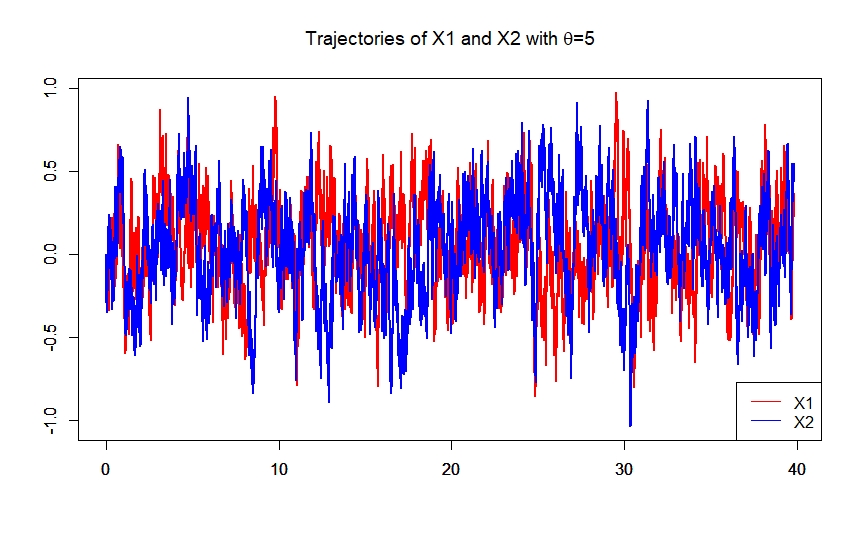}
	\end{subfigure}
\begin{subfigure}[b]{0.5\linewidth}
		\centering
		\includegraphics[width=1\linewidth]{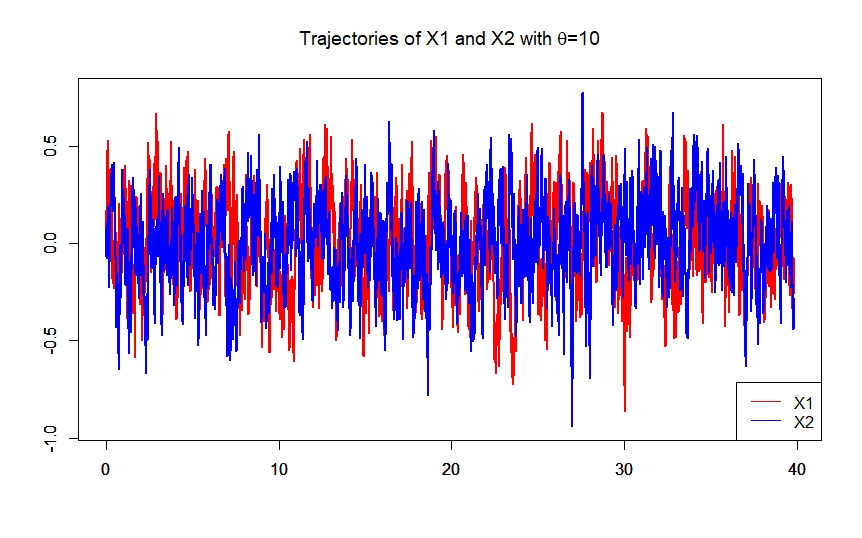}
	\end{subfigure}
\caption{Sample paths of X1 and X2 for different values of $\protect\theta$.}
\end{figure}
\end{enumerate}
The simulation of the Ornstein-Uhlenbeck sample paths $X_{1}$ and $X_{2}$ is
done for different values of the parameter $\theta =\{0.5,1,5,10\}$, for a
sample size $n=10000$ and a mesh $\Delta _{n}=10000^{-0.6}$ ($\lambda =0.6$%
) which corresponds to a time horizon $T_{n}\sim 40$. One can see from the
figures above how the drift parameter value impacts the variability and
raggedness of OU sample paths.
The next step is to illustrate numerically Proposition \ref{lln}. The table
below shows the mean, the median and standard deviation values for $\tilde{%
\rho}(n)$ for different values of $n$, using 500 Monte-Carlo replications
for three different values of the drift parameter $\theta =\{1,5,10\}$. 
\begin{table}[h]
\centering
\begin{tabular}{lcccccccccccc}
\hline
&  &  &  &  &  &  &  &  &  &  &  &  \\ 
&  & \textbf{$n= 10000$} &  & {$n= 50000$} &  & {$n= 100000$} &  &  &  &  & 
&  \\ 
&  & {$T_{n} \sim 40$} &  & {$T_{n} \sim 76$} &  & {$T_{n} \sim 100$} &  & 
&  &  &  &  \\ \hline
\multirow{3}{13 mm}{\bf $\theta = 1$} & Mean & -0.01022 &  & 0.00667 &  & 
0.00377 &  &  &  &  &  &  \\ 
& Median & -0.00725 &  & 0.00873 &  & 0.00168 &  &  &  &  &  &  \\ 
& S.Dev & 0.14990 &  & 0.10162 &  & 0.10898 &  &  &  &  &  &  \\ \hline
\multirow{3}{13 mm}{\bf $\theta = 5$} & Mean & 0.00296 &  & 0.00130 &  & 
0.00088 &  &  &  &  &  &  \\ 
& Median & 0.00136 &  & 0.00214 &  & -0.0011 &  &  &  &  &  &  \\ 
& S.Dev & 0.05237 &  & 0.06892 &  & 0.04602 &  &  &  &  &  &  \\ \hline
\multirow{3}{13 mm}{\bf $\theta = 10$} & Mean & -0.00258 &  & -0.00038 &  & 
0.00015 &  &  &  &  &  &  \\ 
& Median & -0.00147 &  & 0.00036 &  & 0.00035 &  &  &  &  &  &  \\ 
& S.Dev & 0.04935 &  & 0.03641 &  & 0.03156 &  &  &  &  &  &  \\ \hline
\end{tabular}
\caption{Estimation results for $n=\{10000,50000,100000\}$ and $\protect%
\lambda =0.6$.}
\label{1}
\end{table}
Table \ref{1} above shows that $\tilde{\rho}(n)$ approaches zero for large
values of the sample size $n$ which confirms Proposition \ref{lln}, even for
moderate $\theta $, and even though $T=40$ is not an inordinately large
value.
To investigate the asymptotic normal distribution of $\tilde{\rho}(n)$
empirically, we need to compare the distribution of the statistic 
\begin{equation*}
\psi (n,\theta ):=\sqrt{T_{n}}\sqrt{\theta }\tilde{\rho}(n)
\end{equation*}%
with the standard Gaussian distribution $\mathcal{N}(0,1)$. For this aim, we
chose $\theta =2$, $n=100000$, $T_{n}=100$ and based on 3000 replications,
we obtained the following histogram : 
\begin{figure}[h]
\centering
\includegraphics[width=0.7\linewidth]{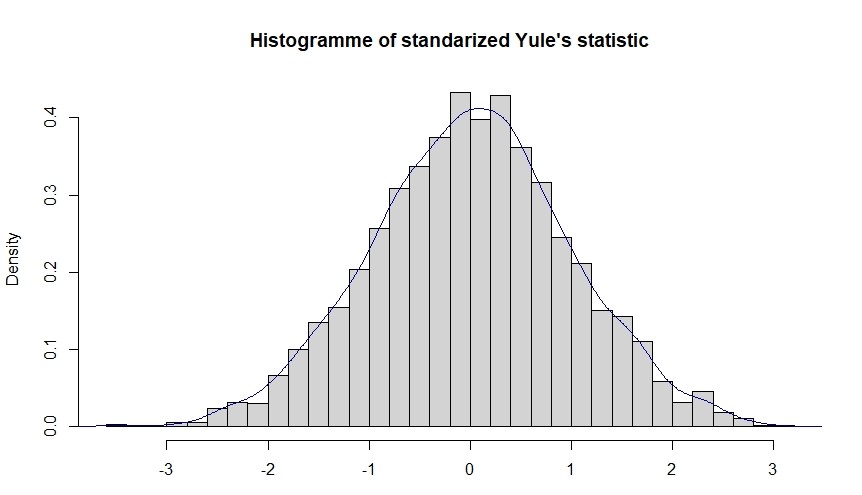}  
\caption{Histogram of $\protect\psi (n,\protect\theta )$ for $n=100000$, $%
T_{n}=100$, $\protect\theta =2$}
\label{fig:ar1}
\end{figure}

The histogram (\ref{fig:ar1}) shows visually that the normal approximation
of the distribution of the statistic $\psi (n,\theta )$ is reasonable for
the sampling size $n=100000$ and time horizon $T_{n}=100$. Moreover, the
results of the next table below show a comparison of statistical properties
between $\psi (n,\theta )$ and $\mathcal{N}$(0,1) with the same parameters
as in Figure \ref{fig:ar1}, we can see that the empirical mean, median and
standard deviation of $\psi (n,\theta )$ and $\mathcal{N}$(0,1) are quite
close, which illustrates well our theoretical results.

\begin{center}
\begin{tabular}{cccc}
\multicolumn{4}{c}{} \\ 
Statistics & Mean & Median & Standard Deviation \\ \hline
$\mathcal{N}$(0,1) & 0 & 0 & 1 \\ 
$\psi(n,\theta)$ & 0.00832 & 0.01206 & 0.99691 \\ \hline
\end{tabular}
\end{center}
We can also illustrate numerically the rate of convergence in law of the
statistic $\psi (n,\theta )$ to the standard Gaussian distribution, by
approximately computing the Kolmogorov distance between $\psi (n,\theta )$
and $\mathcal{N}(0,1)$. For this aim, we approximate the cumulative
distribution function using an empirical cumulative distribution function
based on 500 replications of the computation of $\psi (n,\theta )$ for $%
n=100000$. The figure below shows the empirical and standard normal
cumulative distribution functions. 
\begin{figure}[h]
\centering
\includegraphics[width=0.8\linewidth]{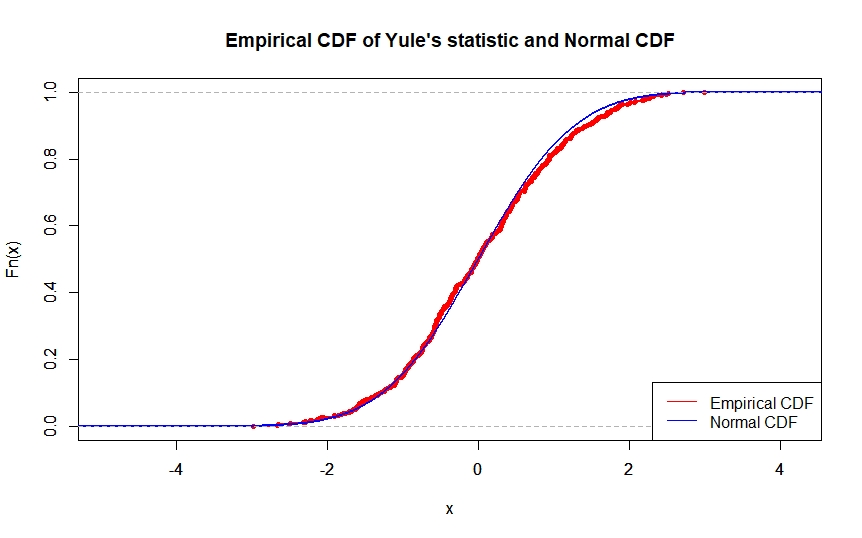}  \label{fig:cdf}
\end{figure}

Based on Remark \ref{continu-vs-discret}, when the mesh $\Delta
_{n}=n^{-\lambda }$ for $\frac{1}{2}<\lambda <\frac{5}{7}$, we expect that 
\begin{equation*}
d_{Kol}\left( \sqrt{T_{n}}\sqrt{\theta }\tilde{\rho}(n),\mathcal{N}%
(0,1)\right) \leqslant c(\theta )(1-\lambda )\ln (n)\times n^{\frac{%
1-2\lambda }{3}}.
\end{equation*}%
In fact, with our choice of $\lambda =0.6$ and a sample size $n=100000$,
the time horizon $T_{n}=100$. The mesh size $n^{-0.6}$ is larger than the
optimal size $n^{-5/7},$ yielding a larger time horizon $T_{n}$ than under
the optimal observation frequency. The Kolmogorov distance between the two
laws, which equals the sup norm of the difference of these cumulative
distribution functions, computes to approximately 0.01974, which implies
that $c(\theta )$ is greater than $9.310^{-3}$. We could have chosen the
optimal mesh{}%
\begin{equation*}
\Delta _{n}=n^{-5/7},
\end{equation*}%
yielding a rate of order $\ln (n)\times n^{-1/7}$, but in this case in order
to have the same time horizon $T_{n}=100$, we would have needed $n=10^{7}$
data points, which is a large number. In practical applications, the cost of
higher-frequency observations, if known, is to be balanced with desired
precision on the Kolmogorov distance, which may well point to a lower
frequency for a fixed time horizon.

\end{document}